\newcommand\bernkopfMelenkNorm[1]{\left\lVert#1\right\rVert}
\newtheorem{assumption}[theorem]{Assumption}
\begin{document}

\title*{Analysis of the $hp$-version of a first order system least squares method for the Helmholtz equation}
\titlerunning{Analysis of the $hp$-FOSLS method for the Helmholtz equation}
\author{M. Bernkopf and J.M. Melenk}
\institute{
Maximilian Bernkopf
\at Technische Universit\"at Wien, Institute for Analysis and Scientific Computing, Wiedner Hauptstrasse 8-10, A-1040 Vienna, \email{maximilian.bernkopf@tuwien.ac.at}
\and
Jens Markus Melenk
\at Technische Universit\"at Wien, Institute for Analysis and Scientific Computing, Wiedner Hauptstrasse 8-10, A-1040 Vienna, \email{melenk@tuwien.ac.at}
}
%
%
\maketitle

\abstract*{
Extending the wavenumber-explicit analysis of~\cite{chen-qiu17}, we analyze the $L^2$-convergence
of a least squares method for the Helmholtz equation with wavenumber $k$. For domains with
an analytic boundary, we obtain improved rates
in the mesh size $h$ and the polynomial degree $p$ under the scale resolution condition that $hk/p$
is sufficiently small and $p/\log k$ is sufficiently large.
}

\abstract{
Extending the wavenumber-explicit analysis of~\cite{chen-qiu17}, we analyze the $L^2$-convergence
of a least squares method for the Helmholtz equation with wavenumber $k$. For domains with
an analytic boundary, we obtain improved rates
in the mesh size $h$ and the polynomial degree $p$ under the scale resolution condition that $hk/p$
is sufficiently small and $p/\log k$ is sufficiently large.
}

\section{Introduction}\label{bernkopf_melenk_section:introduction}

We consider the following Helmholtz problem:
\begin{equation}\label{bernkopf_melenk_eq:helmholtz_boundary_value_problem}
	\begin{alignedat}{2}
		- \Delta u - k^2 u &= f \quad   &&\text{in } \Omega,\\
		\partial_n u - i k u &= g       &&\text{on } \partial \Omega,
	\end{alignedat}
\end{equation}
where $k \ge k_0 > 0$ is real.
For large $k$, the numerical solution of~\eqref{bernkopf_melenk_eq:helmholtz_boundary_value_problem} is challenging due to the requirement to resolve the oscillatory nature of the solution.
A second challenge arises in classical, $H^1$-conforming discretizations of~\eqref{bernkopf_melenk_eq:helmholtz_boundary_value_problem}
from the fact that the Galerkin method is \emph{not} an energy projection, and a meaningful approximation is only
obtained under more stringent conditions on the mesh size $h$ and the polynomial degree $p$ than purely approximation
theoretical considerations suggest.
This shortcoming has been analyzed in the literature.
In particular,
as discussed in more detail in~\cite{melenk-sauter11,esterhazy-melenk12},
the analyses~\cite{ihlenburg98,ihlenburg-babuska95,ihlenburg-babuska97,ainsworth04,melenk-sauter10,melenk-sauter11,esterhazy-melenk12}
show that high order methods are much better suited for the high-frequency
case of large $k$ than low order methods.
Alternatives to the classical Galerkin methods that are still based
on high order methods include stabilized methods for Helmholtz~\cite{feng-wu09,feng-wu11,feng-xing13,zhu-wu13},
hybridizable methods~\cite{chen-lu-xu13}, least-squares type methods~\cite{chen-qiu17, lee-manteuffel-mccormick-ruge00} and
Discontinuous Petrov Galerkin methods,~\cite{petrides-demkowicz17,demkowicz-gopalakrishnan-muga-zitelli12}.
An attractive feature of least squares type methods is that the resulting linear system is always solvable and
that they feature quasi-optimality, albeit in some nonstandard residual norms.
In the present paper,
we show for the least squares method~\eqref{bernkopf_melenk_eq:fosls_method_discrete} an {\sl a priori} estimate in the more tractable $L^2(\Omega)$-norm under the
scale resolution condition~\eqref{bernkopf_melenk_eq:scale_resolution_condition}. For that,
we closely follow~\cite{chen-qiu17}.
Our key refinement over~\cite{chen-qiu17} is an improved regularity estimate for the solution of a suitable dual problem
(cf.\ Lemma~\ref{bernkopf_melenk_lemma:duality_argument} vs.~\cite[Lemma~5.1]{chen-qiu17}) that allows us to establish the improved
$p$-dependence in the $L^2(\Omega)$-error estimate (cf.\ Theorem~\ref{bernkopf_melenk_theorem:a_priori_estimate} vs.~\cite[Thm.~2.5]{chen-qiu17}).
As a tool, which is of independent interest, we develop approximation operators in Raviart-Thomas and Brezzi-Douglas-Marini spaces
with optimal (in $h$ and $p$) approximation rates simultaneously in $L^2(\Omega)$ and $\pmb{H}(\operatorname{div}, \Omega)$.

Throughout this paper, if not otherwise stated, we assume the following:
\begin{assumption}\label{bernkopf_melenk_assumption:general_assumptions}
	In spatial dimension $d = 2,3$ the bounded Lipschitz domain $\Omega \subset \mathbb{R}^d$
        has an analytic boundary.  The wavenumber $k$ satisfies $ k \geq k_0 > 0 $.
	Furthermore $f \in L^2(\Omega)$ and $g \in L^2(\partial \Omega)$.
\end{assumption}

\begin{remark}\label{bernkopf_melenk_remark:analytic_boundary_gives_stability_estimate}
	\smartqed
	Under Assumption~\ref{bernkopf_melenk_assumption:general_assumptions} we may apply~\cite[Thm.~1.8]{baskin-spence-wunsch16}
	to conclude that the solution $u \in H^1(\Omega)$ satisfies the {\sl a priori} bound
	\begin{equation}
\label{bernkopf_melenk_eq:remark:analytic_boundary_gives_stability_estimate}
		\bernkopfMelenkNorm{u}_{H^1(\Omega)} + k \bernkopfMelenkNorm{u}_{L^2(\Omega)} \leq C ( \bernkopfMelenkNorm{f}_{L^2(\Omega)} + \bernkopfMelenkNorm{g}_{L^2(\partial \Omega)} ),
	\end{equation}
	where $C>0$ is independent of $k$.
	\qed
\end{remark}

\noindent
\textbf{Notation and preliminaries}:
Boldface letters like $\pmb{V}$, $\pmb{\varphi}$ and $\pmb{\Pi}$ will be reserved for quantities having more than one spatial dimensions,
while normal letters like $W$, $u$ and $\Pi$ will be used for quantities with one spatial dimension.
The reference element will be denoted by $\widehat{K}$, whereas the physical one will just be denoted by $K$.
In a similar way, we will distinguish between objects associated with the reference element and the physical one.
A function defined on the reference element $\widehat{K}$ will therefore be denoted by $\hat{u}$,
while a function defined on the physical element $K$ will be denoted by $u$.
We will follow the same convention when it comes to operators acting on a function space.
Therefore operators acting on functions defined on $\widehat{K}$ or $K$ will be denoted by $\widehat{\Pi}$ or $\Pi$ respectively.
Generic constants will either be denoted by $C$ or hidden inside a $\lesssim$
and will be independent of the wavenumber $k$, the mesh size $h$ and the polynomial degree $p$, if not otherwise stated.

\noindent
\textbf{Outline}:
The outline of this paper is as follows.
In Section 2 we introduce the first order system least squares (FOSLS) method itself,
followed by Section 3, where we prove a refined duality argument (Lemma~\ref{bernkopf_melenk_lemma:duality_argument}), which is later used to derive an {\sl a priori} estimate (Theorem~\ref{bernkopf_melenk_theorem:a_priori_estimate}) of the method.
Key ingredients are the results of~\cite{melenk-parsania-sauter13}, where a frequency explicit splitting of the solution to~\eqref{bernkopf_melenk_eq:helmholtz_boundary_value_problem} is performed when the data has higher order Sobolev regularity.
Section 4 is concerned with the approximation properties of Raviart-Thomas and Brezzi-Douglas-Marini spaces.
We therefore follow the methodology of~\cite{melenk-sauter10} in order to construct approximation operators,
which are not only $p$-optimal and approximate simultaneous in $L^2(\Omega)$ and $H^1(\Omega)$, but also admit an elementwise construction.
Section 5 is then devoted to the {\sl a priori} estimate.
Concluding, we give numerical examples which compliment the theoretical findings and compare the method to the classical FEM in Section 6.

\section{First order system least squares method and useful results}

In the present Section we introduce the method of~\cite{chen-qiu17} and list some useful results which are used later in the paper.

\subsection{First order system least squares}

We employ the complex Hilbert spaces
\begin{equation*}
	\pmb{V} = \{ \pmb{\varphi} \in \pmb{H}(\operatorname{div}, \Omega) \colon \pmb{\varphi} \cdot \pmb{n} \in L^2( \partial \Omega) \}
\quad
  \text{ and }
\quad
  W = H^1(\Omega),
\end{equation*}
where $\pmb{V}$ is endowed with the usual graph norm and $W$ with the classical $H^1(\Omega)$-norm.
On $\pmb{V} \times W$ we introduce the bilinear form $b$ and the linear functional $F$ by
\begin{align*}
	b( (\pmb{\varphi}, u), (\pmb{\psi}, v) ) &\coloneqq (i k \pmb{\varphi} + \nabla u , i k \pmb{\psi} + \nabla v )_{\Omega} + (i k u + \nabla \cdot \pmb{\varphi} , i k v + \nabla \cdot \pmb{\psi} )_{\Omega} + \\
			& \phantom{ \coloneqq } k ( \pmb{\varphi} \cdot \pmb{n} + u, \pmb{\psi} \cdot \pmb{n} + v )_{\partial \Omega}, \\
	F((\pmb{\psi}, v)) &\coloneqq (-i k^{-1} f, i k v + \nabla \cdot \pmb{\psi})_{\Omega} + (i g, \pmb{\psi} \cdot \pmb{n} + v)_{\partial \Omega},
\end{align*}
where $(u,v)_{\Omega} = \int_{\Omega} u \overline{v} \, \D x$.
If $u \in H^1(\Omega)$ is the weak solution to~\eqref{bernkopf_melenk_eq:helmholtz_boundary_value_problem}
then the pair $(\pmb{\varphi}, u)$ with $\pmb{\varphi} = i k^{-1} \nabla u$ is in fact in $\pmb{V} \times W$
due to the assumed regularity of the data and the domain and therefore satisfies
\begin{equation}\label{bernkopf_melenk_eq:fosls_method_continuous}
  b( (\pmb{\varphi}, u), (\pmb{\psi}, v) ) = F((\pmb{\psi}, v)) ~~~ \forall (\pmb{\psi}, v) \in \pmb{V} \times W.
\end{equation}
For a given regular mesh $\mathcal{T}_h$ we consider the finite element spaces $\pmb{V}_h = \pmb{\mathrm{RT}}_p(\mathcal{T}_h) \subset \pmb{V}$ or $\pmb{V}_h = \pmb{\mathrm{BDM}}_p(\mathcal{T}_h) \subset \pmb{V}$ and $W_h = S_p(\mathcal{T}_h)\subset W$,
where $\pmb{\mathrm{RT}}_p(\mathcal{T}_h)$ denotes the Raviart-Thomas space and $\pmb{\mathrm{BDM}}_p(\mathcal{T}_h)$ the Brezzi-Douglas-Marini space; see Section~\ref{bernkopf_melenk_section:approximation_propertie_of_raviart_thomas} for further detail and definitions.
The FOSLS method is to find $(\pmb{\varphi}_h, u_h) \in \pmb{V}_h \times W_h$ such that
\begin{equation}\label{bernkopf_melenk_eq:fosls_method_discrete}
  b( (\pmb{\varphi}_h, u_h), (\pmb{\psi}_h, v_h) ) = F((\pmb{\psi}_h, v_h)) ~~~ \forall (\pmb{\psi}_h, v_h) \in \pmb{V}_h \times W_h.
\end{equation}

\begin{remark}\label{bernkopf_melenk_remark:unique_solvability}
Based on the {\sl a priori}
estimate~\eqref{bernkopf_melenk_eq:remark:analytic_boundary_gives_stability_estimate} reference
	\cite[Thm.~2.4]{chen-qiu17} asserts the existence of $C>0$ independent of $k$ such that
	\begin{equation*}
		\bernkopfMelenkNorm{\pmb{\varphi}}_{L^2(\Omega)}^2 + \bernkopfMelenkNorm{u}_{L^2(\Omega)}^2 + k \bernkopfMelenkNorm{\pmb{\varphi} \cdot \pmb{n} + u}_{L^2(\partial \Omega)}^2
		\leq C b( (\pmb{\varphi}, u), (\pmb{\varphi}, u) ), \quad \forall (\pmb{\varphi}, u) \in \pmb{V} \times W,
	\end{equation*}
	which immediately gives uniqueness.
	Together with the fact that the pair $(\pmb{\varphi}, u)$ with $\pmb{\varphi} = i k^{-1} \nabla u$ is a solution, we have unique solvability of~\eqref{bernkopf_melenk_eq:fosls_method_continuous}.
\qed
\end{remark}

\subsection{Auxiliary results}

We will need the following decomposition result for the refined duality argument in Lemma~\ref{bernkopf_melenk_lemma:duality_argument}.

\begin{proposition}[{\cite[Thm.~4.5]{melenk-parsania-sauter13}} combined with {\cite[Thm.~1.8]{baskin-spence-wunsch16}}]\label{bernkopf_melenk_proposition:higher_regularity_decomposition}
	Let $ \Omega \subset \mathbb{R}^d, d \in \left\{ 2, 3 \right\} $, be a bounded Lipschitz domain with an
analytic boundary.  Fix $s \in \mathbb{N}_0$.
  Then there exist constants $C, \gamma > 0$ independent of $k$ such that for every $f \in H^s(\Omega)$ and $g \in H^{s+1/2}(\partial \Omega)$ the solution $u = S_k(f,g)$ of~\eqref{bernkopf_melenk_eq:helmholtz_boundary_value_problem} can be written as $u = u_A + u_{H^{s+2}}$, where, for all $n \in \mathbb{N}_0$, there holds
  \begin{align}
    \bernkopfMelenkNorm{u_A}_{H^1(\Omega)} + k \bernkopfMelenkNorm{u_A}_{L^2(\Omega)}
        \leq C
              &( \bernkopfMelenkNorm{f}_{ L^2(\Omega) } + \bernkopfMelenkNorm{g}_{ H^{1/2}(\partial \Omega)} ), \\
    \bernkopfMelenkNorm{\nabla^{n+2} u_A}_{L^2(\Omega)}
        \leq C \gamma^n k^{- 1} \max\left\{ n,k \right\}^{n+2}
              &( \bernkopfMelenkNorm{f}_{L^2(\Omega)} + \bernkopfMelenkNorm{g}_{H^{1/2}(\partial \Omega)} ), \\
    \bernkopfMelenkNorm{u_{H^{s+2}}}_{H^{s+2}(\Omega)} + k^{s+2} \bernkopfMelenkNorm{u_{H^{s+2}}}_{L^2(\Omega)}
        \leq C &( \bernkopfMelenkNorm{f}_{H^s(\Omega)} + \bernkopfMelenkNorm{g}_{H^{s+1/2}(\partial \Omega)} ).
  \end{align}
\end{proposition}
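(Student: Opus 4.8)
The plan is to obtain the decomposition by combining two ingredients: the conditional splitting of \cite[Thm.~4.5]{melenk-parsania-sauter13}, which produces an analytic/finite-regularity decomposition of $S_k(f,g)$ whenever the solution operator enjoys a polynomial-in-$k$ stability bound, and the wavenumber-explicit stability estimate of \cite[Thm.~1.8]{baskin-spence-wunsch16}, which supplies that bound with a $k$-independent constant precisely because $\partial\Omega$ is analytic. Thus the first step would be to record the stability estimate and the second to feed it into the splitting; the genuine analytic regularity is carried by the cited decomposition and would not be re-derived here.

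First I would invoke \cite[Thm.~1.8]{baskin-spence-wunsch16} exactly as in Remark~\ref{bernkopf_melenk_remark:analytic_boundary_gives_stability_estimate}, giving
\[
\bernkopfMelenkNorm{u}_{H^1(\Omega)} + k \bernkopfMelenkNorm{u}_{L^2(\Omega)} \le C(\bernkopfMelenkNorm{f}_{L^2(\Omega)} + \bernkopfMelenkNorm{g}_{L^2(\partial\Omega)})
\]
with $C$ independent of $k$. Since $g \in H^{s+1/2}(\partial\Omega) \hookrightarrow H^{1/2}(\partial\Omega) \hookrightarrow L^2(\partial\Omega)$, this is already the first asserted bound and, more importantly, it verifies the hypothesis of \cite[Thm.~4.5]{melenk-parsania-sauter13} with the optimal stability exponent (no positive power of $k$). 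The role of the analytic boundary is exactly to make this exponent vanish.

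Next I would apply \cite[Thm.~4.5]{melenk-parsania-sauter13} to split $u = S_k(f,g)$ as $u = u_A + u_{H^{s+2}}$. The finite-regularity part $u_{H^{s+2}}$ is the contribution carrying the $H^s$/$H^{s+1/2}$ content of the data, and the weighted third estimate (with the $k^{s+2}$ prefactor) follows from the stability estimate above together with a standard elliptic bootstrap in which each extra derivative is paid for through the Helmholtz identity $-\Delta u = k^2 u + f$, yielding one power of $k$ per order. The analytic part $u_A$ collects the remaining smooth contribution, satisfies the energy bound in the first estimate, and obeys the derivative growth $\bernkopfMelenkNorm{\nabla^{n+2}u_A}_{L^2(\Omega)} \lesssim \gamma^n k^{-1}\max\{n,k\}^{n+2}(\bernkopfMelenkNorm{f}_{L^2(\Omega)}+\bernkopfMelenkNorm{g}_{H^{1/2}(\partial\Omega)})$, where $\gamma$ depends only on the analyticity of $\partial\Omega$.

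The main obstacle, which is entirely internal to \cite[Thm.~4.5]{melenk-parsania-sauter13}, is establishing this last derivative bound with explicit tracking of $k$: the $\max\{n,k\}$ structure encodes that for $n \le k$ the derivatives grow like the oscillation scale $k^{n+2}$, while for $n > k$ they exhibit factorial analytic growth $\gamma^n n^{n+2}$ (up to the $k^{-1}$ factor). Reproducing it would require interior and near-boundary analytic elliptic estimates with constants followed carefully through an induction on the differentiation order, using analyticity of $\partial\Omega$ to control the boundary charts. As this is precisely the statement we cite, I would simply check that the stability exponent supplied by \cite{baskin-spence-wunsch16} matches the required hypothesis and quote the resulting constants, which are then $k$-independent.
\qed
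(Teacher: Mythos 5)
Your proposal matches the paper's approach exactly: the proposition is stated purely as a citation --- \cite[Thm.~4.5]{melenk-parsania-sauter13} (the conditional analytic/finite-regularity splitting) combined with \cite[Thm.~1.8]{baskin-spence-wunsch16} (the $k$-independent stability bound available because $\partial\Omega$ is analytic) --- and the paper gives no further proof beyond this combination, which is precisely the two-step argument you describe. One small inaccuracy worth noting: the stability estimate you record bounds $u$ itself, not $u_A$, so it is not literally ``the first asserted bound'' of the proposition (that bound on the analytic part is part of the cited decomposition's conclusion, as is the weighted bound on $u_{H^{s+2}}$, which does not need a separate bootstrap on your part); this does not affect the validity of the overall argument.
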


\begin{remark}\label{bernkopf_melenk_remark:interpolation_higher_regulartiy_decomposition}
	\smartqed
  Interpolation between $L^2(\Omega)$ and $H^{s+2}(\Omega)$ in Proposition~\ref{bernkopf_melenk_proposition:higher_regularity_decomposition} gives estimates for lower order Sobolev norms: Since we have
  for any $v \in H^m(\Omega)$
  \begin{equation*}
    \bernkopfMelenkNorm{v}_{H^j(\Omega)} \leq
      C \bernkopfMelenkNorm{v}_{H^m(\Omega)}^{\frac{j}{m}} \bernkopfMelenkNorm{v}_{L^2(\Omega)}^{\frac{m-j}{m}}, \qquad j \in \{0, \dots, m\},
  \end{equation*}
  Proposition~\ref{bernkopf_melenk_proposition:higher_regularity_decomposition}
  implies for $j \in \{0, \dots, s+2\}$
  \begin{equation*}
    k^{s+2-j}\bernkopfMelenkNorm{u_{H^{s+2}}}_{H^j(\Omega)} \leq
      C ( \bernkopfMelenkNorm{f}_{H^s(\Omega)} + \bernkopfMelenkNorm{g}_{H^{s+1/2}(\partial \Omega)} ).
  \end{equation*}
	\qed
\end{remark}
Furthermore we often use the multiplicative trace inequality.
We remind the reader of the general form, even though we only need it in the special case $s = 1$.

\begin{proposition}[{\cite[Thm.~A.2]{melenk05}}]\label{bernkopf_melenk_theorem:multiplicative_trace_inquality}
	Let $ \Omega \subset \mathbb{R}^d $ be a Lipschitz domain and $ s \in ( 1/2, 1 ] $.
	Then there exists a constant $ C > 0 $ such that for all $ u \in H^s(\Omega) $ there holds
	\begin{equation*}
		\bernkopfMelenkNorm{u}_{L^2(\partial \Omega)}
			\leq C
			\bernkopfMelenkNorm{u}_{L^2(\Omega)}^{1-1/(2s)}
			\bernkopfMelenkNorm{u}_{H^s(\Omega)}^{1/(2s)},
	\end{equation*}
	where the left-hand side is understood in the trace sense.
\end{proposition}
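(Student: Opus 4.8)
The plan is to reduce the inequality to a one-parameter additive estimate and then optimize. Concretely, I would first aim to show that for every $\epsilon \in (0, \epsilon_0]$ there holds
\begin{equation*}
\| u \|_{L^2(\partial\Omega)}^2 \le C\bigl( \epsilon^{2s-1}\, \| u\|_{H^s(\Omega)}^2 + \epsilon^{-1}\, \| u\|_{L^2(\Omega)}^2 \bigr),
\end{equation*}
and then minimize the right-hand side over $\epsilon$: the map $\epsilon \mapsto \epsilon^{2s-1}A + \epsilon^{-1}B$ attains its minimum at $\epsilon \sim (B/A)^{1/(2s)}$ with minimal value $\sim A^{1/(2s)} B^{1-1/(2s)}$, which, after taking square roots with $A = \| u\|_{H^s(\Omega)}^2$ and $B = \| u\|_{L^2(\Omega)}^2$, is exactly the asserted multiplicative bound with exponent $1/(2s)$. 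The scaling exponents here are not accidental: a dilation check on the half-space (where $\| u\|_{L^2}^2$, $\| u\|_{L^2(\partial\Omega)}^2$ and the homogeneous $H^s$-seminorm scale as $\lambda^{-d}$, $\lambda^{-(d-1)}$ and $\lambda^{2s-d}$ under $u(\cdot)\mapsto u(\lambda\,\cdot)$) forces the interpolation exponent to be $\theta = 1/(2s)$, which is a useful consistency check on the whole scheme.

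For the anchor case $s = 1$ I would argue directly. Choosing a Lipschitz vector field $\pmb{b}$ on $\overline\Omega$ with $\pmb{b}\cdot\pmb{n} \ge c_0 > 0$ a.e.\ on $\partial\Omega$ (such a field exists on bounded Lipschitz domains by patching boundary charts) and applying the divergence theorem to $|u|^2 \pmb{b}$ gives, for smooth $u$ and hence by density for $u \in H^1(\Omega)$,
\begin{equation*}
c_0 \| u\|_{L^2(\partial\Omega)}^2 \le \int_\Omega \operatorname{div}(|u|^2 \pmb{b}) \,\D x \le C\bigl( \| u\|_{L^2(\Omega)}^2 + \| u\|_{L^2(\Omega)} \| \nabla u\|_{L^2(\Omega)} \bigr).
\end{equation*}
Young's inequality applied to the mixed term then yields the parametrized estimate above in the case $s=1$, and optimizing over $\epsilon$ produces $\| u\|_{L^2(\partial\Omega)}^2 \le C \| u\|_{L^2(\Omega)} \| u\|_{H^1(\Omega)}$.

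For the fractional range $s \in (1/2,1)$ I would localize with a partition of unity subordinate to boundary charts and flatten $\partial\Omega$ by Lipschitz changes of variables, reducing matters to the trace onto the hyperplane $\{x_d = 0\}$ of a function on a half-space; a Stein extension (bounded $H^s(\Omega)\to H^s(\mathbb{R}^d)$ and simultaneously $L^2(\Omega)\to L^2(\mathbb{R}^d)$) then lets me work with a function $w$ on all of $\mathbb{R}^d$. On the half-space the parametrized estimate follows from a Fourier representation in the tangential variables together with a one-dimensional trace inequality in $x_d$ for each tangential frequency $\xi'$, the quantity $\langle \xi'\rangle$ playing the role of $\epsilon^{-1}$; integrating in $\xi'$ and splitting into low and high frequencies reproduces the $\epsilon^{2s-1}$ and $\epsilon^{-1}$ weights.

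The main obstacle is the sharpness of the exponent at the fractional endpoint. The trace operator $H^{s}(\Omega)\to L^2(\partial\Omega)$ is bounded for every $s > 1/2$ but degenerates as $s \downarrow 1/2$, and naively combining the plain trace theorem at some intermediate $s' \in (1/2, s]$ with the standard interpolation inequality $\| u\|_{H^{s'}(\Omega)} \le C\| u\|_{L^2(\Omega)}^{1 - s'/s}\| u\|_{H^s(\Omega)}^{s'/s}$ yields a multiplicative bound with exponent $s'/s > 1/(2s)$, i.e.\ never the sharp value. Obtaining the exponent $1/(2s)$ forces the borderline ($s' = 1/2$) analysis to be carried out carefully, which is precisely where the parametrized Fourier estimate does the work that no off-the-shelf interpolation inequality can supply; the remaining care is in controlling the chart transformations and the extension so that this borderline estimate survives the transfer back to the curved boundary $\partial\Omega$.
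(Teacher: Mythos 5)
The paper does not prove this proposition at all: it is imported verbatim from the cited reference \cite[Thm.~A.2]{melenk05}, so there is no internal argument to compare yours against. Judged on its own merits, your plan is a correct, self-contained proof along the standard lines for this inequality, and the structure is sound. The reduction to the $\epsilon$-parametrized additive bound followed by optimization works, with one small point to button up: since you only claim the additive bound for $\epsilon\in(0,\epsilon_0]$, you must also handle the regime where the minimizing $\epsilon\sim(\|u\|_{L^2(\Omega)}/\|u\|_{H^s(\Omega)})^{1/s}$ exceeds $\epsilon_0$; there $\|u\|_{L^2(\Omega)}$ and $\|u\|_{H^s(\Omega)}$ are comparable (up to a constant depending on $\epsilon_0$), and taking $\epsilon=\epsilon_0$ already yields the multiplicative bound, so this is a one-line fix. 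The $s=1$ anchor via the divergence theorem applied to $|u|^2\pmb{b}$ with a Lipschitz field $\pmb{b}$ uniformly transversal to $\partial\Omega$ is the classical Rellich-type argument and is valid on Lipschitz domains after a density argument. For $s\in(1/2,1)$, the localization--flattening--extension reduction is legitimate because bi-Lipschitz changes of variables and multiplication by Lipschitz cutoffs are bounded on $H^s$ for $0\le s\le 1$ and the boundary measure transforms with bounded densities, all uniformly in $\epsilon$; the tangential-Fourier computation with the one-dimensional estimate $|g(0)|^2\lesssim \epsilon^{-1}\|g\|_{L^2(\mathbb{R})}^2+\epsilon^{2s-1}|g|_{H^s(\mathbb{R})}^2$ (valid precisely because $2s>1$) then gives the claimed weights after integration in $\xi'$. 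Your sharpness discussion is exactly the right diagnosis: composing the trace theorem at any $s'>1/2$ with interpolation can only produce the exponent $s'/s>1/(2s)$, so the borderline analysis cannot be replaced by off-the-shelf interpolation, and your dilation check forcing $\theta=1/(2s)$ is a good consistency test. In short: what your argument buys is a proof where the paper offers only a citation; what remains is routine bookkeeping, not a gap in the ideas.
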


\section{Duality Argument}\label{bernkopf_melenk_section:duality_argument}

We extend the results of~\cite[Lemma~5.1]{chen-qiu17}
by showing that the function $\pmb{\psi}_{H^2} \in \pmb{H}^1(\operatorname{div}, \Omega)$, constructed therein, can actually be modified to satisfy $\pmb{\psi}_{H^2} \in \pmb{H}^2(\Omega)$
and still allow for wavenumber-explicit higher order Sobolev norm estimates.

\begin{lemma}\label{bernkopf_melenk_lemma:duality_argument}
  For any $(\pmb{\varphi}, w) \in \pmb{V} \times W$ there exists $(\pmb{\psi}, v) \in \pmb{V} \times W$
such that $\bernkopfMelenkNorm{w}_{L^2(\Omega)}^2 = b( (\pmb{\varphi}, w), (\pmb{\psi}, v) )$.
  The pair $(\pmb{\psi}, v)$ admits a decomposition $\pmb{\psi} = \pmb{\psi}_A + \pmb{\psi}_{H^2}$, $v = v_A + v_{H^2}$, where
  $\pmb{\psi}_A$ and $v_A$ are analytic in $\Omega$, $\pmb{\psi}_{H^2} \in \pmb{H}^2(\Omega)$, and $v_{H^2} \in H^2(\Omega)$.
  Furthermore there exist constants $C, \gamma > 0$ independent of $k$ such that for all $n \in \mathbb{N}_0$
  \begin{align}
    \bernkopfMelenkNorm{\pmb{\psi}_A}_{H^1(\Omega)} + k \bernkopfMelenkNorm{\pmb{\psi}_A}_{L^2(\Omega)} &\leq C k \bernkopfMelenkNorm{w}_{L^2( \Omega )},                                                                  \label{bernkopf_melenk_eq:decomposition_lemma_1} \\
    \bernkopfMelenkNorm{v_A}_{H^1(\Omega)} + k \bernkopfMelenkNorm{v_A}_{L^2(\Omega)} &\leq C k \bernkopfMelenkNorm{w}_{L^2( \Omega )},                                                                                    \label{bernkopf_melenk_eq:decomposition_lemma_2} \\
    \bernkopfMelenkNorm{\nabla^{n+2} \pmb{\psi}_A}_{L^2(\Omega)} + \bernkopfMelenkNorm{\nabla^{n+2} v_A}_{L^2(\Omega)} &\leq C \gamma^n \max\left\{n, k\right\}^{n+2} \bernkopfMelenkNorm{w}_{L^2( \Omega )},              \label{bernkopf_melenk_eq:decomposition_lemma_3} \\
\!\!\!
\!\!\!
    \bernkopfMelenkNorm{\pmb{\psi}_{H^2}}_{H^2(\Omega)} + k \bernkopfMelenkNorm{\pmb{\psi}_{H^2}}_{H^1(\Omega)} + k^2 \bernkopfMelenkNorm{\pmb{\psi}_{H^2}}_{L^2(\Omega)} &\leq C \bernkopfMelenkNorm{w}_{L^2( \Omega )},     \label{bernkopf_melenk_eq:decomposition_lemma_4} \\
    \bernkopfMelenkNorm{v_{H^2}}_{H^2(\Omega)} + k \bernkopfMelenkNorm{v_{H^2}}_{H^1(\Omega)}+ k^2 \bernkopfMelenkNorm{v_{H^2}}_{L^2(\Omega )} &\leq C \bernkopfMelenkNorm{w}_{L^2( \Omega )}.                                \label{bernkopf_melenk_eq:decomposition_lemma_5}
  \end{align}
\end{lemma}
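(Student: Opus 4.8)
The plan is to realize $(\pmb{\psi},v)$ as the solution of the adjoint (normal-equations) problem associated with $b$ and then to read off the claimed decomposition from the regularity splitting of Proposition~\ref{bernkopf_melenk_proposition:higher_regularity_decomposition}. Since $b$ is Hermitian and, by Remark~\ref{bernkopf_melenk_remark:unique_solvability}, positive definite on $\pmb{V}\times W$ (indeed $b(X,X)$ controls $\bernkopfMelenkNorm{\pmb{\varphi}}_{L^2(\Omega)}^2+\bernkopfMelenkNorm{u}_{L^2(\Omega)}^2$, so $b(X,X)=0$ forces $X=0$), the antilinear functional $(\pmb{\eta},z)\mapsto (z,w)_\Omega$ is bounded with respect to the $b$-induced norm, and Riesz representation (equivalently Lax--Milgram) furnishes a unique $(\pmb{\psi},v)\in\pmb{V}\times W$ with
\[
  b((\pmb{\eta},z),(\pmb{\psi},v)) = (z,w)_\Omega \qquad \forall (\pmb{\eta},z)\in \pmb{V}\times W .
\]
Testing this identity with $(\pmb{\eta},z)=(\pmb{\varphi},w)$ immediately yields $\bernkopfMelenkNorm{w}_{L^2(\Omega)}^2=(w,w)_\Omega=b((\pmb{\varphi},w),(\pmb{\psi},v))$, which is the first assertion. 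It remains to produce the decomposition together with the wavenumber-explicit bounds.

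Next I would extract the strong form of the adjoint problem by integration by parts, varying $z$ and $\pmb{\eta}$ separately. Using that $b$ is a sum of squared residuals one obtains, in $\Omega$, the interior equations
\[
  k^2\pmb{\psi} - \nabla(\nabla\cdot\pmb{\psi}) - 2ik\nabla v = 0, \qquad -\Delta v + k^2 v - 2ik\,\nabla\cdot\pmb{\psi} = w \quad \text{in } \Omega,
\]
together with two impedance-type boundary conditions on $\partial\Omega$ coupling $v$, $\partial_n v$, $\pmb{\psi}\cdot\pmb{n}$ and $\nabla\cdot\pmb{\psi}$. The first interior equation shows that $\pmb{\psi}$ is in fact a gradient, $\pmb{\psi}=k^{-2}\nabla(\nabla\cdot\pmb{\psi}+2ikv)=:\nabla\Phi$. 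This structural observation is the crucial input, since it is what will promote the remainder from $\pmb{H}(\operatorname{div},\Omega)$ to $\pmb{H}^2(\Omega)$ and distinguishes the argument from~\cite[Lemma~5.1]{chen-qiu17}. Eliminating $\pmb{\psi}$ (equivalently $\Phi$) from the two interior equations reduces the system to the single fourth-order equation $(\Delta+k^2)^2 v = (k^2-\Delta)w$, which I would solve as two successive Helmholtz problems $-\Delta(\cdot)-k^2(\cdot)=(\text{data})$ with the inherited impedance boundary conditions; by conjugation the splitting of Proposition~\ref{bernkopf_melenk_proposition:higher_regularity_decomposition} applies to each such adjoint-type solve, the analytic and finite-regularity classes being preserved under complex conjugation.

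I would then apply Proposition~\ref{bernkopf_melenk_proposition:higher_regularity_decomposition} to each of the two Helmholtz solves and compose the resulting splittings to write $v=v_A+v_{H^2}$ and $\Phi=\Phi_A+\Phi_{H^3}$, the remainder of the scalar potential being one order smoother than that of $v$. Setting $\pmb{\psi}_A:=\nabla\Phi_A$ for the analytic contribution and $\pmb{\psi}_{H^2}:=\nabla\Phi_{H^3}$ for the remainder, the gradient structure gives $\pmb{\psi}_{H^2}\in\pmb{H}^2(\Omega)$ as required and transfers the $k$-weighted $H^3$-bounds on $\Phi_{H^3}$ to $k$-weighted $H^2$-bounds on $\pmb{\psi}_{H^2}$. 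The analytic estimates~\eqref{bernkopf_melenk_eq:decomposition_lemma_1}--\eqref{bernkopf_melenk_eq:decomposition_lemma_3} then follow from the analyticity bounds of Proposition~\ref{bernkopf_melenk_proposition:higher_regularity_decomposition} applied to each solve; the precise weights of $k$ on the right-hand sides are dictated by the data of the two Helmholtz problems and require careful bookkeeping. The intermediate Sobolev norms arising along the way are supplied by the interpolation estimate of Remark~\ref{bernkopf_melenk_remark:interpolation_higher_regulartiy_decomposition} and, on the boundary, by the multiplicative trace inequality of Proposition~\ref{bernkopf_melenk_theorem:multiplicative_trace_inquality}.

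The main obstacle is the refined remainder estimates~\eqref{bernkopf_melenk_eq:decomposition_lemma_4}--\eqref{bernkopf_melenk_eq:decomposition_lemma_5}, namely establishing $\bernkopfMelenkNorm{v_{H^2}}_{H^2(\Omega)}\le C\bernkopfMelenkNorm{w}_{L^2(\Omega)}$ and its analogue for $\pmb{\psi}_{H^2}$ \emph{without} the superfluous powers of $k$ that a crude estimate propagated through the two solves would incur. This sharpening is precisely the improvement over~\cite[Lemma~5.1]{chen-qiu17} and is what ultimately produces the better $p$-dependence in Theorem~\ref{bernkopf_melenk_theorem:a_priori_estimate}. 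Achieving it requires tracking the cancellation of $k$-powers between the two Helmholtz solves, exploiting that the finite-regularity part of each solve is controlled in the $k^{s+2}$-weighted norm of Proposition~\ref{bernkopf_melenk_proposition:higher_regularity_decomposition}, and applying $H^2$-elliptic regularity on the analytic boundary to the scalar potential $\Phi$ rather than to $\pmb{\psi}$ directly. Once the correct $k$-weights are secured, a routine accounting of the powers of $k$ in all five estimates completes the proof.
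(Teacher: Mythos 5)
Your construction targets the right object, and in fact the \emph{same} object as the paper: the pair $(\pmb{\psi},v)$ constructed in the paper's proof (adjoint solve $-\Delta z-k^2z=w$, $\partial_n z+ikz=0$, followed by the first-order system with data built from $z$) satisfies $b((\pmb{\eta},\zeta),(\pmb{\psi},v))=(\zeta,w)_\Omega$ for \emph{all} $(\pmb{\eta},\zeta)\in\pmb{V}\times W$, which is exactly your Riesz characterization, so by positive definiteness of $b$ the two representers coincide. Your normal equations, the gradient structure $\pmb{\psi}=\nabla\Phi$ (in the paper's variables $\Phi=ik^{-1}(v-z)$), and the eliminated equation $(\Delta+k^2)^2v=(k^2-\Delta)w$ are all correct and are precisely the paper's two Helmholtz solves composed; your skeleton is the paper's proof read backwards from the Euler--Lagrange equations. (A minor point you skip: invoking Riesz requires completeness of $\pmb{V}\times W$ under the $b$-norm, which holds because Remark~\ref{bernkopf_melenk_remark:unique_solvability} together with trace estimates makes the $b$-norm equivalent, with $k$-dependent constants, to the graph norm.)

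The execution, however, has a concrete gap at the decisive step. You propose to solve $(\Delta+k^2)^2v=(k^2-\Delta)w$ by ``two successive Helmholtz problems.'' With the natural intermediate unknown $\rho=(\Delta+k^2)v$, the first solve reads $(\Delta+k^2)\rho=(k^2-\Delta)w$; since $w$ is only in $W=H^1(\Omega)$, this datum lies merely in $H^{-1}(\Omega)$, so neither the {\sl a priori} bound~\eqref{bernkopf_melenk_eq:remark:analytic_boundary_gives_stability_estimate} nor Proposition~\ref{bernkopf_melenk_proposition:higher_regularity_decomposition} (which needs $f\in H^s(\Omega)$, $s\in\mathbb{N}_0$) applies, and no splitting of $\rho$ is available: the argument stalls. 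The cure is to take the intermediate unknown $(\Delta+k^2)v+w=-2k^2z$, i.e.\ to order the factorization so that the rough datum $w$ enters the \emph{first} solve as an $L^2$ right-hand side, and only the smoother quantity $2k^2z_{H^2}\in H^1(\Omega)$ (with boundary datum $(1+i)kz_{H^2}\in H^{3/2}(\partial\Omega)$, where $z_{H^2}$ is the finite-regularity part of $z$) reaches the second solve; there Proposition~\ref{bernkopf_melenk_proposition:higher_regularity_decomposition} with $s=1$ yields an $H^3$ potential and hence $\pmb{\psi}_{H^2}\in\pmb{H}^2(\Omega)$. That ordering is exactly the paper's forward construction, and nothing in your sketch forces it. Two further items are asserted rather than proved: the two natural boundary conditions of your normal equations are never derived, so it is not established that the successive solves are impedance problems of type~\eqref{bernkopf_melenk_eq:helmholtz_boundary_value_problem} (up to conjugation) to which the proposition applies; and the bounds~\eqref{bernkopf_melenk_eq:decomposition_lemma_1}--\eqref{bernkopf_melenk_eq:decomposition_lemma_5}, which are the entire content of the lemma and its improvement over~\cite{chen-qiu17}, are deferred to ``routine accounting,'' whereas the crucial cancellation of $k$-powers, namely
\begin{equation*}
  k^2\bernkopfMelenkNorm{z_{H^2}}_{H^1(\Omega)}\lesssim k\bernkopfMelenkNorm{w}_{L^2(\Omega)}
  \;\Longrightarrow\;
  k^{-1}\bernkopfMelenkNorm{\hat v_{H^3}}_{H^3(\Omega)}\lesssim \bernkopfMelenkNorm{w}_{L^2(\Omega)}
  \;\Longrightarrow\;
  \bernkopfMelenkNorm{\pmb{\psi}_{H^2}}_{H^2(\Omega)}=k^{-1}\bernkopfMelenkNorm{\nabla \hat v_{H^3}}_{H^2(\Omega)}\lesssim \bernkopfMelenkNorm{w}_{L^2(\Omega)},
\end{equation*}
with $\hat v_{H^3}$ the finite-regularity part of the second solve, is precisely the point that must be checked and is where the paper does its real work.
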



\begin{proof}
	\smartqed
  The proof follows the ideas of~\cite[Lemma~5.1]{chen-qiu17}; for the readers' convenience we recapitulate the important steps of the proof.
  The novelty over~\cite{chen-qiu17} is the ability to choose $\pmb{\psi}_{H^2} \in \pmb{H}^2(\Omega)$
	together with $\bernkopfMelenkNorm{\pmb{\psi}_{H^2}}_{H^2(\Omega)} \leq C \bernkopfMelenkNorm{w}_{L^2( \Omega )}$.

	Consider the problem
  \begin{equation*}
    \begin{alignedat}{2}
      - \Delta z - k^2 z &= w \qquad   &&\text{in } \Omega,\\
      \partial_n z + i k z &= 0        &&\text{on } \partial \Omega.
    \end{alignedat}
  \end{equation*}
  For any $\pmb{\varphi} \in \pmb{V}$ we have, using the weak formulation and integrating by parts,
  \begin{align*}
    \bernkopfMelenkNorm{w}_{L^2(\Omega)}^2
    &= ( \nabla w, \nabla z )_\Omega - k^2 ( w, z )_\Omega - i k ( w, z )_{\partial \Omega} \\
    &= ( i k \pmb{\varphi} + \nabla w, \nabla z )_\Omega - ( i k \pmb{\varphi} , \nabla z )_\Omega - k^2 ( w, z )_\Omega - i k ( w, z )_{\partial \Omega} \\
    &= ( i k \pmb{\varphi} + \nabla w, \nabla z )_\Omega + ( \nabla \cdot \pmb{\varphi} + i k w, - i k z )_\Omega + ( \pmb{\varphi} \cdot \pmb{n} + w, i k z )_{\partial \Omega}.
  \end{align*}
  Applying Proposition~\ref{bernkopf_melenk_proposition:higher_regularity_decomposition} together with Remark~\ref{bernkopf_melenk_remark:interpolation_higher_regulartiy_decomposition} we decompose $z$ into $z = z_A + z_{H^2}$ with $z_A$ analytic and $z_{H^2} \in H^2(\Omega)$.
  Furthermore we have, for all $n \in {\mathbb N}_0$,
  \begin{align}
    \bernkopfMelenkNorm{z_A}_{H^1(\Omega)} + k \bernkopfMelenkNorm{z_A}_{L^2(\Omega)}
        \leq C &\bernkopfMelenkNorm{w}_{L^2 (\Omega)}, \label{bernkopf_melenk_eq:z_analytic_part_low_derivatives} \\
    \bernkopfMelenkNorm{\nabla^{n+2} z_A}_{L^2(\Omega)}
        \leq C \gamma^n k^{-1} \max\left\{ n,k \right\}^{n+2} &\bernkopfMelenkNorm{w}_{L^2 (\Omega)}, \label{bernkopf_melenk_eq:z_analytic_part_high_derivatives} \\
    \bernkopfMelenkNorm{z_{H^2}}_{H^2(\Omega)} + k \bernkopfMelenkNorm{z_{H^2}}_{H^1(\Omega)} + k^2 \bernkopfMelenkNorm{z_{H^2}}_{L^2(\Omega)}
        \leq C &\bernkopfMelenkNorm{w}_{L^2 (\Omega)}. \label{bernkopf_melenk_eq:z_h_2_part}
  \end{align}
  Let $(\pmb{\psi}, v) \in \pmb{V} \times W$ solve
  \begin{equation*}
  	\begin{alignedat}{2}
  		i k \pmb{\psi} + \nabla v                           &= \nabla z    \qquad  &&\text{in } \Omega, \\
      i k v + \nabla \cdot \pmb{\psi}                     &= -i k z      \qquad  &&\text{in } \Omega, \\
  		k^{1/2} ( \pmb{\psi} \cdot \pmb{n} + v ) &= i k^{1/2} z \qquad  &&\text{on } \partial \Omega.
  	\end{alignedat}
  \end{equation*}
	Indeed, this system is uniquely solvable by Remark~\ref{bernkopf_melenk_remark:unique_solvability}.
  This gives the desired representation such that $\bernkopfMelenkNorm{w}_{L^2(\Omega)}^2 = b( (\pmb{\varphi}, w), (\pmb{\psi}, v) )$.
  Using the decomposition $z = z_A + z_{H^2}$ we obtain $(\pmb{\psi}, v) = (\tilde{\pmb{\psi}}_A, \tilde{v}_A) + (\tilde{\pmb{\psi}}_{H^2}, \tilde{v}_{H^2})$, where
  \\
  \\
  \noindent\begin{minipage}{.5\linewidth}
  \begin{equation*}
    \begin{alignedat}{2}
      i k \tilde{\pmb{\psi}}_A + \nabla \tilde{v}_A                           &= \nabla z_A    \qquad  &\text{in } \Omega, \\
      i k \tilde{v}_A + \nabla \cdot \tilde{\pmb{\psi}}_A                     &= -i k z_A      \qquad  &\text{in } \Omega, \\
      k^{1/2} ( \tilde{\pmb{\psi}}_A \cdot \pmb{n} + \tilde{v}_A ) &= i k^{1/2} z_A \qquad  &\text{on } \partial \Omega,
    \end{alignedat}
  \end{equation*}
  \end{minipage}%
  \begin{minipage}{.5\linewidth}
    \begin{equation*}
      \begin{alignedat}{2}
        i k \tilde{\pmb{\psi}}_{H^2} + \nabla \tilde{v}_{H^2}                           &= \nabla z_{H^2}    \qquad  &\text{in } \Omega, \\
        i k \tilde{v}_{H^2} + \nabla \cdot \tilde{\pmb{\psi}}_{H^2}                     &= -i k z_{H^2}      \qquad  &\text{in } \Omega, \\
        k^{1/2} ( \tilde{\pmb{\psi}}_{H^2} \cdot \pmb{n} + \tilde{v}_{H^2} ) &= i k^{1/2} z_{H^2} \qquad  &\text{on } \partial \Omega.
      \end{alignedat}
    \end{equation*}
  \end{minipage}
  \\
  \\
  One can immediately verify that
  \begin{equation}\label{bernkopf_melenk_eq:v_A_tilde_pde}
    \begin{alignedat}{2}
      - \Delta ( \tilde{v}_A - z_A ) - k^2 ( \tilde{v}_A - z_A )   &= 2 k^2 z_A \qquad            \qquad &&\text{in } \Omega,\\
      \partial_n ( \tilde{v}_A - z_A ) - i k ( \tilde{v}_A - z_A ) &= (1 + i) k z_A    \qquad &&\text{on } \partial \Omega,
    \end{alignedat}
  \end{equation}
  as well as
  \begin{equation}\label{bernkopf_melenk_eq:v_h_2_tilde_pde}
    \begin{alignedat}{2}
      - \Delta ( \tilde{v}_{H^2} - z_{H^2} ) - k^2 ( \tilde{v}_{H^2} - z_{H^2} )   &= 2 k^2 z_{H^2} \qquad            \qquad &&\text{in } \Omega,\\
      \partial_n ( \tilde{v}_{H^2} - z_{H^2} ) - i k ( \tilde{v}_{H^2} - z_{H^2} ) &= (1 + i) k z_{H^2}    \qquad &&\text{on } \partial \Omega.
    \end{alignedat}
  \end{equation}
  Note that the right-hand sides in equation~\eqref{bernkopf_melenk_eq:v_A_tilde_pde} are analytic.
  This fact is used in~\cite[Lemma~5.1, Lemma~4.4]{chen-qiu17} to prove the following bounds
for all $n \in {\mathbb N}_0$:
  \begin{align}
    \bernkopfMelenkNorm{\nabla^{n+2} \tilde{v}_A}_{L^2(\Omega)} &\leq C \gamma^n \max\left\{n, k\right\}^{n+2} \bernkopfMelenkNorm{w}_{L^2( \Omega )},                         \label{bernkopf_melenk_eq:v_A_tilde_high_derivatives}\\
    \bernkopfMelenkNorm{\tilde{v}_A}_{H^1(\Omega)} + k \bernkopfMelenkNorm{\tilde{v}_A}_{L^2(\Omega)} &\leq C k \bernkopfMelenkNorm{w}_{L^2( \Omega )},                           \label{bernkopf_melenk_eq:v_A_tilde_low_derivatives}\\
    \bernkopfMelenkNorm{\nabla^{n+2} \tilde{\pmb{\psi}}_A}_{L^2(\Omega)} &\leq C \gamma^n \max\left\{n, k\right\}^{n+2} \bernkopfMelenkNorm{w}_{L^2( \Omega )},                \label{bernkopf_melenk_eq:psi_A_tilde_high_derivatives}\\
    \bernkopfMelenkNorm{\tilde{\pmb{\psi}}_A}_{H^1(\Omega)} + k \bernkopfMelenkNorm{\tilde{\pmb{\psi}}_A}_{L^2(\Omega)} &\leq C k \bernkopfMelenkNorm{w}_{L^2( \Omega )}.         \label{bernkopf_melenk_eq:psi_A_tilde_low_derivatives}
  \end{align}
  Since $\tilde{v}_{H^2} - z_{H^2} = S_k( 2 k^2 z_{H^2}, (1 + i) k z_{H^2} )$, where $S_k$ denotes the solution
operator for~\eqref{bernkopf_melenk_eq:helmholtz_boundary_value_problem}, we can exploit the regularity of the right-hand sides in equation~\eqref{bernkopf_melenk_eq:v_h_2_tilde_pde}.
  Applying Proposition~\ref{bernkopf_melenk_proposition:higher_regularity_decomposition} with $s = 1$ as well as Remark~\ref{bernkopf_melenk_remark:interpolation_higher_regulartiy_decomposition}
  we decompose $\tilde{v}_{H^2} - z_{H^2} = \hat{v}_A + \hat{v}_{H^3}$, where $\hat{v}_A$ is analytic and $\hat{v}_{H^3} \in H^3(\Omega)$.
  For every $j \in \left\{0, 1, 2, 3\right\}$ we have
  \begin{align*}
    k^{3-j}\bernkopfMelenkNorm{\hat{v}_{H^3}}_{H^j(\Omega)}
    &\lesssim \bernkopfMelenkNorm{2 k^2 z_{H^2}}_{H^1(\Omega)} + \bernkopfMelenkNorm{(1 + i) k z_{H^2}}_{H^{3/2} (\partial \Omega)} \\
    &\lesssim \underbrace{k^2 \bernkopfMelenkNorm{z_{H^2}}_{H^1(\Omega)}}_{\stackrel{\eqref{bernkopf_melenk_eq:z_h_2_part}}{\lesssim} k \bernkopfMelenkNorm{w}_{L^2(\Omega)}}
     + \underbrace{k \bernkopfMelenkNorm{z_{H^2}}_{H^{3/2} (\partial \Omega)}}_{\lesssim k \bernkopfMelenkNorm{z_{H^2}}_{H^2(\Omega)} \stackrel{\eqref{bernkopf_melenk_eq:z_h_2_part}}{\lesssim} k \bernkopfMelenkNorm{w}_{L^2(\Omega)}} \\
    &\lesssim k \bernkopfMelenkNorm{w}_{L^2(\Omega)}.
  \end{align*}
  Summarizing the above we have
  \begin{equation}\label{bernkopf_melenk_eq:v_h_3_hat_low_derivatives}
    k^{-1} \bernkopfMelenkNorm{\hat{v}_{H^3}}_{H^3(\Omega)} + \bernkopfMelenkNorm{\hat{v}_{H^3}}_{H^2(\Omega)} + k \bernkopfMelenkNorm{\hat{v}_{H^3}}_{H^1(\Omega)}+ k^2 \bernkopfMelenkNorm{\hat{v}_{H^3}}_{L^2(\Omega )} \leq C \bernkopfMelenkNorm{w}_{L^2( \Omega )}.
  \end{equation}
  In order to analyze the behavior of $\hat{v}_A$ we first estimate
  \begin{equation*}
    \bernkopfMelenkNorm{2 k^2 z_{H^2}}_{L^2(\Omega)} + \bernkopfMelenkNorm{(1 + i) k z_{H^2}}_{H^{1/2} (\partial \Omega)}
      \stackrel{\eqref{bernkopf_melenk_eq:z_h_2_part}}{\lesssim} \bernkopfMelenkNorm{w}_{L^2(\Omega)}.
  \end{equation*}
  We therefore conclude, again with Proposition~\ref{bernkopf_melenk_proposition:higher_regularity_decomposition}, that
  \begin{align}
    \bernkopfMelenkNorm{\hat{v}_A}_{H^1(\Omega)} + k \bernkopfMelenkNorm{\hat{v}_A}_{L^2(\Omega)} &\leq C \bernkopfMelenkNorm{w}_{L^2( \Omega )},                 \label{bernkopf_melenk_eq:v_A_hat_low_derivatives} \\
    \bernkopfMelenkNorm{\nabla^{n+2} \hat{v}_A}_{L^2(\Omega)} &\leq C \gamma^n k^{ -1 } \max\left\{ n,k \right\}^{n+2} \bernkopfMelenkNorm{w}_{L^2( \Omega )}. \label{bernkopf_melenk_eq:v_A_hat_high_derivatives}
  \end{align}
	We turn to the final decompositions with associated norm bounds.

	\noindent
  \textbf{Final decomposition of $v$}:
  \begin{equation*}
    v = \tilde{v}_A + \tilde{v}_{H^2} = \tilde{v}_A + \underbrace{\tilde{v}_{H^2} - z_{H^2}}_{= \hat{v}_A + \hat{v}_{H^3}} + z_{H^2} = \underbrace{\tilde{v}_A + \hat{v}_A}_{\eqqcolon v_A} + \underbrace{\hat{v}_{H^3} + z_{H^2}}_{\eqqcolon v_{H^2}}.
  \end{equation*}

	\noindent
  \textbf{Verification of~\eqref{bernkopf_melenk_eq:decomposition_lemma_2}}:
  \begin{align*}
    \bernkopfMelenkNorm{v_A}_{H^1(\Omega)} + k \bernkopfMelenkNorm{v_A}_{L^2(\Omega)}
      &\leq \underbrace{\bernkopfMelenkNorm{\tilde{v}_A}_{H^1(\Omega)} + k \bernkopfMelenkNorm{\tilde{v}_A}_{L^2(\Omega)}}_{\stackrel{\eqref{bernkopf_melenk_eq:v_A_tilde_low_derivatives}}{\leq} C k \bernkopfMelenkNorm{w}_{L^2( \Omega )}}
          + \underbrace{\bernkopfMelenkNorm{\hat{v}_A}_{H^1(\Omega)} + k \bernkopfMelenkNorm{\hat{v}_A}_{L^2(\Omega)}}_{\stackrel{\eqref{bernkopf_melenk_eq:v_A_hat_low_derivatives}}{\leq} C \bernkopfMelenkNorm{w}_{L^2( \Omega )}} \\
      &\leq C k \bernkopfMelenkNorm{w}_{L^2( \Omega )}.
  \end{align*}

	\noindent
  \textbf{Verification of~\eqref{bernkopf_melenk_eq:decomposition_lemma_5}}:
  \begin{align*}
    \bernkopfMelenkNorm{v_{H^2}}_{H^2(\Omega)} &+ k \bernkopfMelenkNorm{v_{H^2}}_{H^1(\Omega)}+ k^2 \bernkopfMelenkNorm{v_{H^2}}_{L^2(\Omega )} \\
      &\leq \underbrace{\bernkopfMelenkNorm{\hat{v}_{H^3}}_{H^2(\Omega)} + k \bernkopfMelenkNorm{\hat{v}_{H^3}}_{H^1(\Omega)} + k^2 \bernkopfMelenkNorm{\hat{v}_{H^3}}_{L^2(\Omega )}}_{\stackrel{\eqref{bernkopf_melenk_eq:v_h_3_hat_low_derivatives}}{\leq} C \bernkopfMelenkNorm{w}_{L^2( \Omega )}} \\
      &\phantom{\leq} + \underbrace{\bernkopfMelenkNorm{z_{H^2}}_{H^2(\Omega)} + k \bernkopfMelenkNorm{z_{H^2}}_{H^1(\Omega)} + k^2 \bernkopfMelenkNorm{z_{H^2}}_{L^2(\Omega )}}_{\stackrel{\eqref{bernkopf_melenk_eq:z_h_2_part}}{\leq} C \bernkopfMelenkNorm{w}_{L^2( \Omega )}} \\
      &\leq C \bernkopfMelenkNorm{w}_{L^2( \Omega )}.
  \end{align*}

	\noindent
  \textbf{Final decomposition of $\pmb{\psi}$}:
  Since $- i k \tilde{\pmb{\psi}}_{H^2} = \nabla ( \tilde{v}_{H^2} - z_{H^2} ) = \nabla \hat{v}_A +  \nabla \hat{v}_{H^3}$,
  we decompose $\tilde{\pmb{\psi}}_{H^2} = \hat{\pmb{\psi}}_A + \hat{\pmb{\psi}}_{H^2}$ accordingly such that $- i k \hat{\pmb{\psi}}_A = \nabla \hat{v}_A$ and consequently $- i k \hat{\pmb{\psi}}_{H^2} = \nabla \hat{v}_{H^3}$.
  The final decomposition takes the form
  \begin{equation*}
    \pmb{\psi} = \tilde{\pmb{\psi}}_A + \tilde{\pmb{\psi}}_{H^2} = \underbrace{\tilde{\pmb{\psi}}_A + \hat{\pmb{\psi}}_A}_{\eqqcolon \pmb{\psi}_A} + \underbrace{\hat{\pmb{\psi}}_{H^2}}_{\eqqcolon \pmb{\psi}_{H^2}}.
  \end{equation*}

	\noindent
  \textbf{Verification of~\eqref{bernkopf_melenk_eq:decomposition_lemma_1}}:
  \begin{align*}
    \bernkopfMelenkNorm{\pmb{\psi}_A}_{H^1(\Omega)} &+ k \bernkopfMelenkNorm{\pmb{\psi}_A}_{L^2(\Omega)} \\
    &\leq \underbrace{\bernkopfMelenkNorm{\tilde{\pmb{\psi}}_A}_{H^1(\Omega)} + k \bernkopfMelenkNorm{\tilde{\pmb{\psi}}_A}_{L^2(\Omega)}}_{\stackrel{\eqref{bernkopf_melenk_eq:psi_A_tilde_low_derivatives}}{\leq} C k \bernkopfMelenkNorm{w}_{L^2( \Omega )}}
        + \bernkopfMelenkNorm{\hat{\pmb{\psi}}_A}_{H^1(\Omega)} + k \bernkopfMelenkNorm{\hat{\pmb{\psi}}_A}_{L^2(\Omega)}  \\
    &\leq C k \bernkopfMelenkNorm{w}_{L^2( \Omega )} + k^{-1}\bernkopfMelenkNorm{\nabla \hat{v}_A}_{H^1(\Omega)} + \bernkopfMelenkNorm{\nabla \hat{v}_A}_{L^2(\Omega)} \\
    &\leq C k \bernkopfMelenkNorm{w}_{L^2( \Omega )}
        + k^{-1}\underbrace{\bernkopfMelenkNorm{\hat{v}_A}_{H^1(\Omega)}}_{\stackrel{\eqref{bernkopf_melenk_eq:v_A_hat_low_derivatives}}{\leq} C \bernkopfMelenkNorm{w}_{L^2( \Omega )}}
        + k^{-1}\underbrace{\bernkopfMelenkNorm{\nabla^2 \hat{v}_A}_{L^2(\Omega)}}_{\stackrel{\eqref{bernkopf_melenk_eq:v_A_hat_high_derivatives}}{\leq} C k \bernkopfMelenkNorm{w}_{L^2( \Omega )}}
        + \underbrace{\bernkopfMelenkNorm{\hat{v}_A}_{H^1(\Omega)}}_{\stackrel{\eqref{bernkopf_melenk_eq:v_A_hat_low_derivatives}}{\leq} C \bernkopfMelenkNorm{w}_{L^2( \Omega )}} \\
    &\leq C k \bernkopfMelenkNorm{w}_{L^2( \Omega )}.
  \end{align*}
	\noindent
  \textbf{Verification of~\eqref{bernkopf_melenk_eq:decomposition_lemma_3}}:
  This is an immediate consequence of~\eqref{bernkopf_melenk_eq:v_A_tilde_high_derivatives},~\eqref{bernkopf_melenk_eq:psi_A_tilde_high_derivatives},~\eqref{bernkopf_melenk_eq:v_A_hat_high_derivatives}, and the fact that $- i k \hat{\pmb{\psi}}_A = \nabla \hat{v}_A$.

	\noindent
  \textbf{Verification of~\eqref{bernkopf_melenk_eq:decomposition_lemma_4}}:
  Since $- i k \hat{\pmb{\psi}}_{H^2} = \nabla \hat{v}_{H^3}$ we estimate
  \begin{align*}
    \bernkopfMelenkNorm{\pmb{\psi}_{H^2}}_{H^2(\Omega)} &+ k \bernkopfMelenkNorm{\pmb{\psi}_{H^2}}_{H^1(\Omega)} + k^2 \bernkopfMelenkNorm{\pmb{\psi}_{H^2}}_{L^2(\Omega)} \\
    &= k^{-1} \bernkopfMelenkNorm{\nabla \hat{v}_{H^3}}_{H^2(\Omega)}
       +      \bernkopfMelenkNorm{\nabla \hat{v}_{H^3}}_{H^1(\Omega)}
       + k    \bernkopfMelenkNorm{\nabla \hat{v}_{H^3}}_{L^2(\Omega)} \\
    &\leq \underbrace{k^{-1} \bernkopfMelenkNorm{\hat{v}_{H^3}}_{H^3(\Omega)}
                      +      \bernkopfMelenkNorm{\hat{v}_{H^3}}_{H^2(\Omega)}
                      + k    \bernkopfMelenkNorm{\hat{v}_{H^3}}_{H^1(\Omega)}}_{\stackrel{\eqref{bernkopf_melenk_eq:v_h_3_hat_low_derivatives}}{\leq} C \bernkopfMelenkNorm{w}_{L^2( \Omega )}}\\
    &\leq C \bernkopfMelenkNorm{w}_{L^2( \Omega )},
  \end{align*}
	which concludes the proof.
  \qed
\end{proof}

\section{Approximation properties of Raviart-Thomas and Brezzi-Douglas-Marini spaces}\label{bernkopf_melenk_section:approximation_propertie_of_raviart_thomas}

In the present Section we analyze the approximation properties of Raviart-Thomas and Brezzi-Douglas-Marini spaces.
To that end, we first state some standard assumptions on the mesh and recall the relevant function spaces.
Next, we will prove the existence of a polynomial approximation operator acting on functions defined on the reference element having certain desirable properties, as outlined below.
This operator will then be used to construct a global polynomial approximation operator by means of the Piola transformation.

\subsection{Preliminaries}

We start with assumptions on the triangulation.
\begin{assumption}[quasi-uniform regular meshes]\label{bernkopf_melenk_assumption:quasi_uniform_regular_meshes}
	Let $\widehat{K}$ be the reference simplex.
  Each element map $F_K \colon \widehat{K} \to K$ can be written as $F_K = R_K \circ A_K$, where $A_K$ is an affine map and the maps $R_K$ and $A_K$ satisfy, for constants $C_\mathrm{affine}, C_\mathrm{metric}, \gamma > 0$ independent of $K$:
  \begin{equation*}
  	\begin{alignedat}{2}
        &\bernkopfMelenkNorm{A^\prime_K}_{L^\infty( \widehat{K} )}        \leq C_\mathrm{affine} h_K, \qquad &&\bernkopfMelenkNorm{ (A^\prime_K)^{-1} }_{L^\infty( \widehat{K} )} \leq C_\mathrm{affine} h^{-1}_K, \\
        &\bernkopfMelenkNorm{ (R^\prime_K)^{-1} }_{L^\infty( \tilde{K} )} \leq C_\mathrm{metric},     \qquad &&\bernkopfMelenkNorm{ \nabla^n R_K }_{L^\infty( \tilde{K} )} \leq C_\mathrm{metric} \gamma^n n! \qquad \forall n \in \mathbb{N}_0.
  	\end{alignedat}
  \end{equation*}
  Here, $\tilde{K} = A_K(\widehat{K})$ and $h_K > 0$ denotes the element diameter.
\end{assumption}

We recall the definition of the Sobolev space $H_{00}^{1/2}(\omega)$. If $\omega$ is an edge of a triangle or face of a tetrahedron, then the norm $\bernkopfMelenkNorm{\cdot}_{H_{00}^{1/2}(\omega)}$ is given by
\begin{equation*}
  \bernkopfMelenkNorm{u}_{H_{00}^{1/2}(\omega)}^2 \coloneqq \bernkopfMelenkNorm{u}_{H^{1/2}(\omega)}^2 + \bernkopfMelenkNorm{\frac{u}{\sqrt{\mathrm{dist}(\cdot, \partial \omega)}}}_{L^2(\omega)}^2,
\end{equation*}
and the space $H_{00}^{1/2}(\omega)$ is the completion of $C_{0}^\infty(\omega)$ under this norm.
Since this norm is induced by a scalar product the space $H_{00}^{1/2}(\omega)$ is a Hilbert space.

On the reference element $\widehat{K}$ we introduce the Raviart-Thomas and Brezzi-Douglas-Marini elements of degree $p \geq 0$ in dimension $d$:
\begin{align*}
  \mathcal{P}_p(\widehat{K}) &\coloneqq \mathrm{span}\left\{ \pmb{x}^{\pmb \alpha} \colon |\pmb{\alpha}| \leq p \right\}, \\
	\pmb{\mathrm{BDM}}_p(\widehat{K}) &\coloneqq \mathcal{P}_p(\widehat{K})^d, \\
  \pmb{\mathrm{RT}}_p(\widehat{K}) &\coloneqq \left\{ \pmb{p} + \pmb{x}q \colon \pmb{p} \in \mathcal{P}_p(\widehat{K})^d, q \in \mathcal{P}_p(\widehat{K}) \right\}.
\end{align*}
Note that trivially $\pmb{\mathrm{BDM}}_p(\widehat{K}) \subset \pmb{\mathrm{RT}}_p(\widehat{K})$.
We also recall the classical Piola transformation, which is the appropriate change of variables for $\pmb{H}(\operatorname{div}, \Omega)$.
For a function $\pmb{\varphi} : K \to \mathbb{R}^d$ and the element map $F_K \colon \widehat{K} \to K$
its Piola transform $\widehat{\pmb{\varphi}} : \widehat{K} \to \mathbb{R}^d$ is given by
\begin{equation*}
	\widehat{\pmb{\varphi}} = (\det F_K^\prime ) (F_K^\prime)^{-1} \pmb{\varphi} \circ F_K.
\end{equation*}
Furthermore we introduce the spaces $S_p(\mathcal{T}_h)$, $\pmb{\mathrm{BDM}}_p(\mathcal{T}_h)$, and $\pmb{\mathrm{RT}}_p(\mathcal{T}_h)$ by standard transformation and (contravariant) Piola transformation respectively:
\begin{align*}
  S_{p}(\mathcal{T}_h) &\coloneqq \left\{ u \in H^1(\Omega) \colon \left.\kern-\nulldelimiterspace{u}\vphantom{\big|} \right|_{K} \circ F_K \in \mathcal{P}_{p}(\widehat{K}) \text{ for all } K \in \mathcal{T}_h \right\}, \\
	\pmb{\mathrm{BDM}}_p(\mathcal{T}_h) &\coloneqq \left\{ \pmb{\varphi} \in \pmb{H}(\operatorname{div}, \Omega) \colon (\det F_K^\prime) (F_K^\prime)^{-1} \left.\kern-\nulldelimiterspace{\pmb{\varphi}}\vphantom{\big|} \right|_{K} \circ F_K \in \pmb{\mathrm{BDM}}_p(\widehat{K}) \text{ for all } K \in \mathcal{T}_h \right\}, \\
  \pmb{\mathrm{RT}}_p(\mathcal{T}_h) &\coloneqq \left\{ \pmb{\varphi} \in \pmb{H}(\operatorname{div}, \Omega) \colon (\det F_K^\prime) (F_K^\prime)^{-1} \left.\kern-\nulldelimiterspace{\pmb{\varphi}}\vphantom{\big|} \right|_{K} \circ F_K \in \pmb{\mathrm{RT}}_p(\widehat{K}) \text{ for all } K \in \mathcal{T}_h \right\}.
\end{align*}

\subsection{Polynomial approximation on the reference element}

We construct a polynomial approximation operator on the reference element $\widehat{K}$:

\begin{definition}\label{bernkopf_melenk_definition:approximation_operator_pi_hat}
  Let $\widehat{K}$ be the reference simplex in $\mathbb{R}^d$, $s > d/2$ and $p \in \mathbb{N}$. We define the operator $\widehat{\Pi}_p: H^s(\widehat{K}) \to \mathcal{P}_p(\widehat{K})$ by the following consecutive minimization steps:
  \begin{enumerate}
    \item Fix $\widehat{\Pi}_pu$ in the vertices: $(\widehat{\Pi}_pu)(\widehat{V}) = u(\widehat{V})$ for all $d+1$ vertices $\widehat{V}$ of $\widehat{K}$. \label{bernkopf_melenk_fix_vertices}
    \item Fix $\widehat{\Pi}_pu$ on the edges: for every edge $\hat{e}$ of $\widehat{K}$ the restriction $\left.\kern-\nulldelimiterspace{(\widehat{\Pi}u_p)}\vphantom{\big|} \right|_{\hat{e}}$ is the unique minimizer of \label{bernkopf_melenk_fix_edges}
    \begin{equation}
      \mathcal{P}_p(\hat{e}) \ni \pi \mapsto p \bernkopfMelenkNorm{u - \pi}_{L^2(\hat{e})}^2 + \bernkopfMelenkNorm{u - \pi}_{H_{00}^{1/2}(\hat{e})}^2, \quad \text{ s.t. }  \pi \text{ satisfies }~\ref{bernkopf_melenk_fix_vertices}.
    \end{equation}
    \item Fix $\widehat{\Pi}_pu$ on the faces (only for $d = 3$): for every face $\hat{f}$ of $\widehat{K}$ the restriction $\left.\kern-\nulldelimiterspace{(\widehat{\Pi}u_p)}\vphantom{\big|} \right|_{\hat{f}}$ is the unique minimizer of \label{bernkopf_melenk_fix_faces}
    \begin{equation}
      \mathcal{P}_p(\hat{f}) \ni \pi \mapsto p^2 \bernkopfMelenkNorm{u - \pi}_{L^2(\hat{f})}^2 + \bernkopfMelenkNorm{u - \pi}_{H^{1}(\hat{f})}^2, \quad \text{ s.t. }  \pi \text{ satisfies }~\ref{bernkopf_melenk_fix_vertices},~\ref{bernkopf_melenk_fix_edges}.
    \end{equation}
    \item Fix $\widehat{\Pi}_pu$ in the volume: $\widehat{\Pi}_pu$ is the unique minimizer of \label{bernkopf_melenk_fix_volume}
    \begin{equation}
      \mathcal{P}_p(\widehat{K}) \ni \pi \mapsto p^2 \bernkopfMelenkNorm{u - \pi}_{L^2(\widehat{K})}^2 + \bernkopfMelenkNorm{u - \pi}_{H^{1}(\widehat{K})}^2, \,\, \text{ s.t. }  \pi \text{ satisfies }~\ref{bernkopf_melenk_fix_vertices},~\ref{bernkopf_melenk_fix_edges},~\ref{bernkopf_melenk_fix_faces}.
    \end{equation}
  \end{enumerate}
\end{definition}

It is convenient to construct an approximant $Iu$ of a function $u$ in an elementwise fashion.
The drawback is that one has to check if the approximant is in fact in the finite element space.
A useful property to achieve this is the following:
The restriction of the approximant $\left.\kern-\nulldelimiterspace{Iu}\vphantom{\big|} \right|_{E}$ to lower dimensional entities $E$ of the mesh, i.e., edges, faces or vertices, is completely determined by the corresponding restriction of $u$.
To put this rigorously, we employ the following concept:

\begin{definition}[restriction property]\label{bernkopf_melenk_definition:restriction_property}
    Let $\widehat{K}$ be the reference simplex in $\mathbb{R}^d$, $s > d/2$, and $p \in \mathbb{N}$.
    A polynomial  $\pi \in \mathcal{P}_p(\widehat{K})$ is said to satisfy the \textit{restriction property} of polynomial degree $p$ for $u \in H^s(\widehat{K})$,
    if it satisfies~\ref{bernkopf_melenk_fix_vertices},~\ref{bernkopf_melenk_fix_edges},~\ref{bernkopf_melenk_fix_faces} of Definition~\ref{bernkopf_melenk_definition:approximation_operator_pi_hat}.
\end{definition}

\begin{remark}\label{bernkopf_melenk_remark:simple_properties_of_pi_hat}
	\smartqed
  Note that minimizations in the definition of the operator $\widehat{\Pi}_p$ are uniquely solvable.
  This is due to the fact these minimizations are constrained minimizations of norms induced by Hilbert spaces.
	These constraints are given by an affine subspace $\mathcal{V}_p^u \leq \mathcal{P}_p(\widehat{K})$, the space of all polynomials satisfying the restriction property for $u$.
  Step~\ref{bernkopf_melenk_fix_volume} is therefore the orthogonal projection onto the space $\mathcal{V}_p^u$ with respect to the scalar product inducing the norm
  \begin{equation*}
    |||u|||^2 \coloneqq p^2 \bernkopfMelenkNorm{u}_{L^2(\widehat{K})}^2 + \bernkopfMelenkNorm{u}_{H^{1}(\widehat{K})}^2.
  \end{equation*}
  Furthermore the  affine space $\mathcal{V}_p^u$ can be written as $\mathcal{V}_p^u = \pi^u + \mathcal{P}_p^0$ for some $\pi^u \in \mathcal{V}_p^u$,
  where $\mathcal{P}_p^0(\widehat{K}) \leq \mathcal{P}_p(\widehat{K})$ is the space of polynomials
vanishing on $\partial \widehat{K}$. The operator $\widehat{\Pi}_p$ can,
apart from being the solution to a minimization problem, also be written as:
  \begin{equation}
\label{bernkopf_melenk_eq:affine-rep-Pi}
    \widehat{\Pi}_p u = \mathrm{argmin}\{ |||u - \pi||| : \pi \in  \mathcal{V}_p^u\} = \pi^u + \widehat{\Pi}_{\mathcal{P}_p^0}(u - \pi^u),
  \end{equation}
  where $\widehat{\Pi}_{\mathcal{P}_p^0}$ denotes the orthogonal projection onto the space $\mathcal{P}_p^0(\widehat{K})$, again with respect to the scalar product inducing $|||\cdot|||$.
  The operator $\widehat{\Pi}_p: H^s(\widehat{K}) \to \mathcal{P}_p(\widehat{K})$ is furthermore linear.
  This is easily seen when one explicitly constructs the Steps~\ref{bernkopf_melenk_fix_vertices},~\ref{bernkopf_melenk_fix_edges},~\ref{bernkopf_melenk_fix_faces} in
Definition~\ref{bernkopf_melenk_definition:approximation_operator_pi_hat}:
  First, one picks polynomials $\pi_{\widehat{V}}$, which are $1$ at the vertex $\widehat{V}$ and zero on all the others.
  Consider the mapping $\widehat{\Pi}_{\widehat{V}} : u \mapsto \sum_{\widehat{V}} u(\widehat{V}) \pi_{\widehat{V}}$.
  This realizes Step~\ref{bernkopf_melenk_fix_vertices}.
  Next one considers the mapping $\tilde{\Pi}_{\hat{e}} : z \mapsto \mathrm{argmin}\{ p \bernkopfMelenkNorm{u - \pi}_{L^2(\hat{e})}^2 + \bernkopfMelenkNorm{u - \pi}_{H_{00}^{1/2}(\hat{e})}^2 : z(\widehat{V}) = 0 \text{ for all vertices } \widehat{V} \}$ and extending it to the reference element.
  Step~\ref{bernkopf_melenk_fix_edges} is then realized by the map $\widehat{\Pi}_{\hat{e}} : u \mapsto  \widehat{\Pi}_{\widehat{V}} u + \tilde{\Pi}_{\hat{e}}( u - \widehat{\Pi}_{\widehat{V}} u )$. One can easily continue this procedure for Step~\ref{bernkopf_melenk_fix_faces} and~\ref{bernkopf_melenk_fix_volume}.
	As a composition of linear operators $\widehat{\Pi}_p$ is therefore also linear.
	\qed
\end{remark}

\begin{remark}
	\smartqed
  Definition~\ref{bernkopf_melenk_definition:restriction_property} of the restriction property was introduced in~\cite[Definition~5.3]{melenk-sauter10} under the name \textit{element-by-element construction}.
  This is due to the fact that, when working in $S_p(\mathcal{T}_h) \leq H^1(\Omega)$, a polynomial, which is constructed in an elementwise fashion on the reference simplex $\widehat{K}$, satisfying the restriction property is already an element of the conforming element space $S_p(\mathcal{T}_h)$.
  However, when working in $\pmb{H}(\operatorname{div}, \Omega)$ or $\pmb{H}(\mathrm{curl}, \Omega)$ one only needs continuity of the inter element normal or tangential trace.
  Furthermore it is necessary to use the Piola transformation to go back and forth between the reference element and the physical element to ensure that normal and tangential vectors are mapped appropriately.
  For the purpose of this paper we therefore use the name restriction property, rather than element-by-element construction.
	\qed
\end{remark}

In the Propositions~\ref{bernkopf_melenk_proposition:theorem_b_4_melenk_sauter_math_comp},~\ref{bernkopf_melenk_proposition:lemma_c_2_melenk_sauter_math_comp}, and~\ref{bernkopf_melenk_proposition:lemma_c_3_melenk_sauter_math_comp} we recall certain useful results concerning approximation properties of polynomials satisfying the restriction property.
These results can be found in~\cite{melenk-sauter10}.

\begin{proposition}[{\cite[Thm.~B.4]{melenk-sauter10}}]\label{bernkopf_melenk_proposition:theorem_b_4_melenk_sauter_math_comp}
  Let $\widehat{K}$ be the reference triangle or reference tetrahedron.
  Let $s>d/2$.
  Then there exists $C>0$ (depending only on $s$ and $d$) and for every $p$ a linear operator $\widehat{\Pi}^{\mathrm{MS}}_p \colon H^s(\widehat{K}) \to \mathcal{P}_p(\widehat{K})$, such that
  $\widehat{\Pi}^{\mathrm{MS}}_p u$ satisfies the restriction property of Definition~\ref{bernkopf_melenk_definition:restriction_property} as well as
  \begin{equation}
    p \bernkopfMelenkNorm{u - \widehat{\Pi}^{\mathrm{MS}}_p u}_{L^2(\widehat{K})} + \bernkopfMelenkNorm{u - \widehat{\Pi}^{\mathrm{MS}}_p u}_{H^1(\widehat{K})} \leq C p^{-(s-1)}|u|_{H^s(\widehat{K})} \qquad \forall p \geq s-1.
  \end{equation}
\end{proposition}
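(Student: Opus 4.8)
The plan is to take $\widehat{\Pi}^{\mathrm{MS}}_p$ to be precisely the operator $\widehat{\Pi}_p$ of Definition~\ref{bernkopf_melenk_definition:approximation_operator_pi_hat}, which is linear by Remark~\ref{bernkopf_melenk_remark:simple_properties_of_pi_hat} and satisfies the restriction property by construction, so that only the approximation estimate remains. The decisive observation is that the final minimization step realizes $\widehat{\Pi}_p u$ as the $|||\cdot|||$-orthogonal projection of $u$ onto the affine space $\mathcal{V}_p^u$ of all polynomials satisfying the restriction property (cf.~\eqref{bernkopf_melenk_eq:affine-rep-Pi}), whence
\begin{equation*}
  |||u - \widehat{\Pi}_p u||| = \min_{\pi \in \mathcal{V}_p^u} |||u - \pi|||.
\end{equation*}
Since $p\bernkopfMelenkNorm{v}_{L^2(\widehat{K})} + \bernkopfMelenkNorm{v}_{H^1(\widehat{K})} \lesssim |||v|||$, it suffices to exhibit a \emph{single} competitor $\pi \in \mathcal{V}_p^u$ with $|||u - \pi||| \lesssim p^{-(s-1)}|u|_{H^s(\widehat{K})}$; the projection can only do better.

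\textbf{Construction of a competitor.} I would start from an unconstrained polynomial approximation $q \in \mathcal{P}_p(\widehat{K})$ satisfying the simultaneous estimates $\bernkopfMelenkNorm{u-q}_{L^2(\widehat{K})} \lesssim p^{-s}|u|_{H^s(\widehat{K})}$ and $\bernkopfMelenkNorm{u-q}_{H^1(\widehat{K})} \lesssim p^{-(s-1)}|u|_{H^s(\widehat{K})}$ (classical $hp$-approximation on the simplex), so that $|||u-q||| \lesssim p^{-(s-1)}|u|_{H^s(\widehat{K})}$. In general $q \notin \mathcal{V}_p^u$, since its boundary traces differ from the prescribed edge/face minimizers of Steps~\ref{bernkopf_melenk_fix_edges},~\ref{bernkopf_melenk_fix_faces}. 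I would therefore correct $q$ on $\partial\widehat{K}$ hierarchically: over vertices (using $H^s(\widehat{K}) \hookrightarrow C^0(\widehat{K})$ for $s>d/2$), then edges, then (for $d=3$) faces, lifting the difference between the prescribed trace and the current trace of $q$ into $\widehat{K}$ by stable, degree-preserving polynomial extension operators, and adding these corrections to $q$ to obtain a polynomial $\pi \in \mathcal{V}_p^u$.

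\textbf{Error estimate.} By the triangle inequality $|||u - \pi||| \le |||u - q||| + |||\pi - q|||$, and the first term is already of the required order. The correction $\pi - q$ is the lifting of the boundary jump between the prescribed traces and those of $q$; by stability of the liftings, $|||\pi - q|||$ is controlled by the boundary norms of this jump. Here the minimizing property of the prescribed traces is decisive: on each edge $\hat{e}$ the prescribed trace minimizes $p\bernkopfMelenkNorm{u - \cdot}_{L^2(\hat{e})}^2 + \bernkopfMelenkNorm{u - \cdot}_{H_{00}^{1/2}(\hat{e})}^2$ over all admissible polynomials, hence is at least as close to $u|_{\hat{e}}$ as $q|_{\hat{e}}$ is, so the jump is dominated by $u - q$ measured on $\hat{e}$ (and analogously on faces and at vertices). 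The trace theorem together with the multiplicative trace inequality of Proposition~\ref{bernkopf_melenk_theorem:multiplicative_trace_inquality} then converts these boundary norms into the bound $p^{-(s-1)}|u|_{H^s(\widehat{K})}$, completing the estimate.

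\textbf{Main obstacle.} The crux is the $p$-explicit stability of the polynomial extension operators from edges and faces into the simplex, that is, liftings preserving the polynomial degree whose operator norms do not degrade as $p\to\infty$, together with the careful bookkeeping of the fractional $H_{00}^{1/2}$ edge norm and of the weights $p$, $p^2$ entering Definition~\ref{bernkopf_melenk_definition:approximation_operator_pi_hat}. Matching these weights so that the interior, face, edge and vertex contributions all telescope to the single power $p^{-(s-1)}$ is the delicate part; the remaining steps are routine consequences of the trace theorem and of the orthogonal-projection property recorded above.
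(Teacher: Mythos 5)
First, note what the paper actually does with this statement: it does \emph{not} prove it. The proposition is quoted verbatim from \cite[Thm.~B.4]{melenk-sauter10}, and the surrounding text says explicitly that Propositions~\ref{bernkopf_melenk_proposition:theorem_b_4_melenk_sauter_math_comp}--\ref{bernkopf_melenk_proposition:lemma_c_3_melenk_sauter_math_comp} are recalled from the literature. Its role in the paper is to serve as the \emph{input} for Theorem~\ref{bernkopf_melenk_theorem:properties_of_pi_hat},~\eqref{bernkopf_melenk_eq:properties_of_pi_hat_3}: there, $\widehat{\Pi}^{\mathrm{MS}}_p u$ is used as a competitor in the constrained minimization of Step~\ref{bernkopf_melenk_fix_volume} of Definition~\ref{bernkopf_melenk_definition:approximation_operator_pi_hat}, which transfers the estimate to $\widehat{\Pi}_p$. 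Your proposal runs this implication backwards: you set $\widehat{\Pi}^{\mathrm{MS}}_p \coloneqq \widehat{\Pi}_p$ and then try to prove the estimate by exhibiting a competitor $\pi \in \mathcal{V}_p^u$ by hand. This is not circular, but observe that the detour through $\widehat{\Pi}_p$ and~\eqref{bernkopf_melenk_eq:affine-rep-Pi} buys you nothing: since your competitor construction (vertex interpolation, then edge/face corrections by linear liftings applied to a linearly chosen $q$) is itself a linear map into $\mathcal{V}_p^u$, a completed construction would already \emph{be} an operator satisfying the proposition, with no reference to $\widehat{\Pi}_p$ at all. The entire content of the cited theorem is precisely that construction.

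And that construction is where your proposal has a genuine gap: the two ingredients you defer are not routine, and at least one step fails as literally written. (i) You need degree-preserving polynomial liftings from edges (and, for $d=3$, faces) of $\widehat{K}$, vanishing on the remaining boundary entities, with $p$-uniform stability exactly between the weighted norms of Definition~\ref{bernkopf_melenk_definition:approximation_operator_pi_hat} — in particular an $L^2$-decay of the type $\bernkopfMelenkNorm{J}_{L^2(\widehat{K})} \lesssim p^{-1/2}\bernkopfMelenkNorm{j}_{L^2(\hat{f})}$ for the lifting $J$ of a face datum $j$; these lifting lemmas are the technical core of the appendix of \cite{melenk-sauter10} and you do not supply them. (ii) Your reduction of the boundary jump to ``$u-q$ measured on the entity'' via the multiplicative trace inequality of Proposition~\ref{bernkopf_melenk_theorem:multiplicative_trace_inquality} only yields $\bernkopfMelenkNorm{u-q}_{L^2(\partial\widehat{K})} \lesssim p^{-(s-1/2)}|u|_{H^s(\widehat{K})}$; feeding this into the face functional of Step~\ref{bernkopf_melenk_fix_faces}, whose $L^2(\hat{f})$ term carries the weight $p$, produces $p^{-(s-3/2)}$, which overshoots the target $p^{-(s-1)}$ by half a power of $p$. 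Closing this gap requires exactly the lifting decay from (i), together with simultaneous $H^2$-type bounds on $u-q$ (to control the $H^{1/2}_{00}(\hat{e})$ and $H^1(\hat{f})$ parts of the jump by interpolation) and, for $d=3$, trace estimates on the codimension-two edges. In short, your proposal is a plausible roadmap — essentially the roadmap of the original proof in \cite{melenk-sauter10} — but the steps you label ``delicate'' and ``routine'' are the theorem, and they are missing.
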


\begin{remark}
	\smartqed
  The operator $\widehat{\Pi}^{\mathrm{MS}}_p$ does in general not preserve polynomials $q \in \mathcal{P}_p(\widehat{K})$.
	See also~\cite{melenk-rojik18} for operators with the projection property.
	\qed
\end{remark}

\begin{proposition}[{\cite[Lemma~C.2]{melenk-sauter10}}]\label{bernkopf_melenk_proposition:lemma_c_2_melenk_sauter_math_comp}
  Let $d \in \{1,2,3\}$, and let $\widehat{K} \subset \mathbb{R}^d$ be the reference simplex.
  Let $\gamma, \tilde{C} > 0$ be given.
  Then there exist constants $C, \sigma > 0$ that depend solely on $\gamma$ and $\tilde{C}$ such that the following is true:
  For any function $u$ that satisfies for some $C_u$, $h$, $R > 0$ and $\kappa>1$ the conditions
  \begin{equation*}
    \bernkopfMelenkNorm{\nabla^n u}_{L^2(\widehat{K})} \leq C_u (\gamma h)^n \max\{n/R, \kappa\}^n \qquad \forall n \in \mathbb{N}_{\geq 2},
  \end{equation*}
  and for any polynomial degree $p \in \mathbb{N}$ that satisfies
  \begin{equation*}
    \frac{h}{R} + \frac{\kappa h}{p} \leq \tilde{C}
  \end{equation*}
  there holds
  \begin{equation*}
    \inf_{\pi \in \mathcal{P}_p(\widehat{K})} \bernkopfMelenkNorm{u-\pi}_{W^{2, \infty}(\widehat{K})} \leq C C_u \left[ \left(\frac{h/R}{\sigma + h/R}\right)^{p+1} + \left(\frac{h \kappa}{\sigma p}\right)^{p+1} \right].
  \end{equation*}
\end{proposition}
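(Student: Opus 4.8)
The plan is to exploit that the left-hand side is an \emph{infimum} over $\mathcal{P}_p(\widehat{K})$: I am free to use a \emph{different} approximant in each of the two regimes encoded by the factor $\max\{n/R,\kappa\}^n$, and the two resulting error bounds will produce the two summands on the right-hand side. Before splitting, I would reconcile the $L^2$-hypothesis with the $W^{2,\infty}$-conclusion. Fixing an integer $s_0>d/2$, the embedding $H^{s_0}(\widehat{K})\hookrightarrow L^\infty(\widehat{K})$ gives
\begin{equation*}
\bernkopfMelenkNorm{\nabla^n u}_{L^\infty(\widehat{K})}\lesssim \sum_{j=0}^{s_0}\bernkopfMelenkNorm{\nabla^{n+j}u}_{L^2(\widehat{K})}\lesssim C_u\,(\gamma h)^n\,\max\{n/R,\kappa\}^n\, q(n),
\end{equation*}
where, using the scale condition $h/R+\kappa h/p\le\tilde{C}$, the shift $n\mapsto n+j$ by the fixed amount $j\le s_0$ inflates $\max\{n/R,\kappa\}^n$ by at most a factor $q(n)$ that is polynomial in $n$ once $n\sim p$. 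Since any fixed-degree polynomial factor $q(p)$ can be absorbed into a geometric rate $r^{p+1}$ at the cost of slightly decreasing $\sigma$, such factors are harmless and I suppress them below.

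In the regime $p+1<R\kappa$ the derivatives of order $n\sim p+1$ obey $\max\{n/R,\kappa\}=\kappa$, i.e.\ the purely geometric bound $\bernkopfMelenkNorm{\nabla^n u}_{L^\infty(\widehat{K})}\lesssim C_u(\gamma h\kappa)^n$. Here I would take $\pi$ to be the degree-$p$ Taylor polynomial of $u$ about the centroid of $\widehat{K}$ and bound each $\partial^\beta(u-\pi)$, $|\beta|\le2$, by the Lagrange form of the remainder, which involves only the order-$(p+1)$ derivatives of $u$. As $\widehat{K}$ has diameter $O(1)$, this gives
\begin{equation*}
\bernkopfMelenkNorm{u-\pi}_{W^{2,\infty}(\widehat{K})}\lesssim C_u\,\frac{(C_0\gamma h\kappa)^{p+1}}{(p+1)!},
\end{equation*}
and Stirling's estimate $(p+1)!\gtrsim\bigl((p+1)/e\bigr)^{p+1}$ turns the right-hand side into $C_u\bigl(\tfrac{h\kappa}{\sigma p}\bigr)^{p+1}$, the second summand. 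This step needs no analyticity and is super-geometric, hence also dominated by the claimed bound in the other regime.

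In the complementary regime $p+1\ge R\kappa$ one has $\max\{n/R,\kappa\}=n/R$ for $n\sim p+1$, so $\bernkopfMelenkNorm{\nabla^n u}_{L^\infty(\widehat{K})}\lesssim C_u(\gamma h/R)^n n^n\lesssim C_u\,n!\,(e\gamma h/R)^n$; that is, $u$ extends holomorphically to a complex neighborhood of $\widehat{K}$ of size comparable to the radius of analyticity $\rho\sim R/(\gamma h)$. The Taylor truncation is now insufficient: it only yields the rate $(c\,h/R)^{p+1}$, which stops decaying once $h/R$ is large, whereas only $h/R\le\tilde{C}$ is assumed. Instead I would invoke the polynomial-approximation theory for analytic functions, whereby best approximation on $\widehat{K}$ of a function holomorphic on a neighborhood of radius $\rho$ converges at the \emph{saturating} geometric rate $\frac{\mathrm{diam}(\widehat{K})}{\sigma\rho+\mathrm{diam}(\widehat{K})}$; with $\rho\sim R/(\gamma h)$ and after adjusting $\sigma$ this equals $\frac{h/R}{\sigma+h/R}$, the first summand. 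Crucially this rate is strictly below one for \emph{every} admissible $h/R$, which repairs the breakdown of the Taylor bound. Taking $\sigma$ to be the smaller of the two constants, each depending only on $\gamma$, $\tilde{C}$ and $d$, and using that $\inf_\pi$ is bounded by whichever regime applies, proves the estimate.

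I expect the genuine difficulty to sit entirely in this analytic-approximation engine, namely establishing the saturating rate $\frac{h/R}{\sigma+h/R}$ \emph{uniformly} in the size of the analyticity region. In one dimension this is classical, following from Cauchy/Bernstein-ellipse estimates for best polynomial approximation; the $d$-dimensional statement would then be reduced to the one-dimensional one by a tensor-product argument together with an extension of $u$ from $\widehat{K}$ to a surrounding box that preserves the analytic-type derivative bounds. The delicate point is to carry out this reduction so that the resulting $\sigma$ still depends only on $\gamma$ and $\tilde{C}$, and so that the two regime-dependent bounds can be merged under a single such $\sigma$.
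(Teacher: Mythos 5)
The paper never proves this proposition: it is recalled verbatim, as a known result, from \cite[Lemma~C.2]{melenk-sauter10}, so your attempt can only be measured against the strategy of that source (which likewise runs through Sobolev embedding, holomorphic extension, and polynomial approximation of holomorphic functions) and on its own internal correctness. Your preliminary step and your first regime are fine: the embedding argument with a polynomial inflation factor (absorbable by shrinking $\sigma$, legitimately, because $h/R\le\tilde C$ and $\sigma'/\sigma<1$), and the Taylor--Stirling bound in the regime $p+1<R\kappa$, where the Lagrange remainder only sees derivatives of order $p+1$, which there obey the purely geometric bound $(\gamma h\kappa)^{p+1}$.

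The genuine gap is in the regime $p+1\ge R\kappa$. Every version of the ``saturating geometric rate'' for holomorphic functions (Bernstein-ellipse or Cauchy-estimate based) has the form $\inf_{\pi\in\mathcal{P}_p(\widehat K)}\bernkopfMelenkNorm{u-\pi}_{L^\infty(\widehat K)}\le C\,M\,\eta^{p+1}$, where $M$ is the supremum of the holomorphic extension on the complex neighborhood being used; your proposal never bounds $M$, and here $M$ is \emph{not} $O(C_u)$. The hypothesis only gives $\bernkopfMelenkNorm{\nabla^n u}\le C_u(\gamma h\kappa)^n$ for the intermediate orders $2\le n<R\kappa$, and summing the Taylor series on a neighborhood of width $\delta R/(\gamma h)$ yields $M\sim C_u e^{c\delta R\kappa}$; this is sharp, as the example $u(x)=e^{i\kappa\gamma h x}$ shows (it satisfies the hypothesis with $C_u=1$ for \emph{every} $R$, and its extension has modulus exactly $e^{\delta R\kappa}$ at distance $\delta R/(\gamma h)$). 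Since in your second regime $R\kappa$ may be as large as $p+1$, what your argument actually delivers is $C_u\,e^{c(p+1)}\bigl(\tfrac{h/R}{\sigma_0+h/R}\bigr)^{p+1}$ with $\sigma_0\sim\delta$, $c\sim\delta$. The exponential factor cannot be absorbed by shrinking $\sigma$: one would need $e^{c}(\sigma+h/R)\le\sigma_0+h/R$ uniformly in $h/R\le\tilde C$, i.e.\ $(e^{c}-1)\tilde C\le\sigma_0-e^{c}\sigma$, and since $c$ and $\sigma_0$ both scale linearly in $\delta$ this fails whenever $\tilde C$ is large, no matter how $\delta$ and $\sigma$ are tuned. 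The estimate is nevertheless true, but rescuing it requires exactly the interplay your plan forgoes by assigning one summand to each regime: split the regime $R\kappa\le p+1$ once more. If $R\kappa\le\epsilon p$ with $\epsilon=\epsilon(\gamma,\tilde C)$ small, then $e^{c R\kappa}\le\bigl(\tfrac{\sigma_0+h/R}{\sigma+h/R}\bigr)^{p}$ and the first summand suffices; if $\epsilon p<R\kappa\le p+1$, then $h\kappa/p=(R\kappa/p)\,(h/R)\ge\epsilon\,h/R$, so the \emph{second} summand $\bigl(\tfrac{h\kappa}{\sigma p}\bigr)^{p+1}$ dominates $\bigl(\tfrac{\epsilon\,h/R}{\sigma}\bigr)^{p+1}$ and swallows $e^{c(p+1)}\bigl(\tfrac{h/R}{\sigma_0}\bigr)^{p+1}$ after a final decrease of $\sigma$. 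Relatedly, your closing paragraph locates the difficulty in making the rate uniform in the size of the analyticity region; that part is standard --- the difficulty you must actually address is the magnitude factor $M$.
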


\begin{proposition}[{\cite[Lemma~C.3]{melenk-sauter10}}]\label{bernkopf_melenk_proposition:lemma_c_3_melenk_sauter_math_comp}
  Assume the hypotheses of Proposition~\ref{bernkopf_melenk_proposition:lemma_c_2_melenk_sauter_math_comp}.
  Then one can find a polynomial $\pi \in \mathcal{P}_p(\widehat{K})$ that satisfies
  \begin{equation*}
    \bernkopfMelenkNorm{u-\pi}_{W^{1, \infty}(\widehat{K})} \leq C C_u \left[ \left(\frac{h/R}{\sigma + h/R}\right)^{p+1} + \left(\frac{h \kappa}{\sigma p}\right)^{p+1} \right].
  \end{equation*}
  and additionally satisfies the restriction property of Definition~\ref{bernkopf_melenk_definition:restriction_property}.
\end{proposition}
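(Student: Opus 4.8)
The plan is to start from the best $W^{2,\infty}$-approximation supplied by Proposition~\ref{bernkopf_melenk_proposition:lemma_c_2_melenk_sauter_math_comp} and to correct it on the skeleton of $\widehat K$ so as to enforce the restriction property while losing only one order of differentiability. Concretely, let $\pi_\ast \in \mathcal{P}_p(\widehat{K})$ be the polynomial of Proposition~\ref{bernkopf_melenk_proposition:lemma_c_2_melenk_sauter_math_comp}, abbreviate the right-hand side of its estimate by $\delta := C C_u [ (\tfrac{h/R}{\sigma+h/R})^{p+1} + (\tfrac{h\kappa}{\sigma p})^{p+1}]$, and set $w := u - \pi_\ast$, so that $\bernkopfMelenkNorm{w}_{W^{2,\infty}(\widehat{K})} \le \delta$. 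Since $W^{2,\infty}(\widehat K)\hookrightarrow W^{1,\infty}(\widehat K)$ trivially, the contribution $\bernkopfMelenkNorm{w}_{W^{1,\infty}(\widehat{K})}\le\delta$ already has the desired form; the whole issue is to add a polynomial correction that repairs the restriction property without destroying this $W^{1,\infty}$-bound.

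First I would record the algebraic structure of the correction. Denote by $T$ the \emph{restriction-property trace}, i.e.\ the map (linear, by Remark~\ref{bernkopf_melenk_remark:simple_properties_of_pi_hat}) sending a function $v\in H^s(\widehat K)$ to the skeleton data prescribed in steps~\ref{bernkopf_melenk_fix_vertices}--\ref{bernkopf_melenk_fix_faces} of Definition~\ref{bernkopf_melenk_definition:approximation_operator_pi_hat}. Because those minimizers reproduce polynomials of degree $\le p$, the map $T$ acts as the identity on the traces of elements of $\mathcal{P}_p(\widehat K)$; in particular $T\pi_\ast = \pi_\ast|_{\partial\widehat K}$. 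Hence, defining $\pi := \pi_\ast + \widehat{\Pi}^{\mathrm{MS}}_p w$ with the linear operator of Proposition~\ref{bernkopf_melenk_proposition:theorem_b_4_melenk_sauter_math_comp}, linearity of $T$ together with the restriction property of $\widehat{\Pi}^{\mathrm{MS}}_p w$ (which gives $T(\widehat{\Pi}^{\mathrm{MS}}_p w)=Tw$) yields $T\pi = \pi_\ast|_{\partial\widehat K} + Tw = T\pi_\ast + T(u-\pi_\ast) = Tu$. Thus $\pi$ satisfies the restriction property of Definition~\ref{bernkopf_melenk_definition:restriction_property} for $u$, and the error equals the purely polynomial-corrected quantity $u - \pi = (\mathrm{Id} - \widehat{\Pi}^{\mathrm{MS}}_p)w$.

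It remains to prove $\bernkopfMelenkNorm{(\mathrm{Id}-\widehat{\Pi}^{\mathrm{MS}}_p)w}_{W^{1,\infty}(\widehat K)}\lesssim\delta$, and this is the main obstacle, since Proposition~\ref{bernkopf_melenk_proposition:theorem_b_4_melenk_sauter_math_comp} only controls $\widehat{\Pi}^{\mathrm{MS}}_p$ in the $H^1$- and $L^2$-norms, whereas the target is the stronger $W^{1,\infty}$-norm. I would bound $\bernkopfMelenkNorm{w}_{W^{1,\infty}}\le\delta$ directly and estimate the polynomial $\widehat{\Pi}^{\mathrm{MS}}_p w \in\mathcal{P}_p(\widehat K)$ by a $p$-explicit inverse inequality of the form $\bernkopfMelenkNorm{q}_{W^{1,\infty}(\widehat K)}\lesssim p^{\beta}\bernkopfMelenkNorm{q}_{H^1(\widehat K)}$ for $q\in\mathcal{P}_p(\widehat K)$, combined with $\bernkopfMelenkNorm{\widehat{\Pi}^{\mathrm{MS}}_p w}_{H^1}\le \bernkopfMelenkNorm{w}_{H^1} + Cp^{-(s-1)}|w|_{H^s(\widehat K)}$ from Proposition~\ref{bernkopf_melenk_proposition:theorem_b_4_melenk_sauter_math_comp}. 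Since $\pi_\ast$ has degree $p$, for $s\ge p+2$ one has $|w|_{H^s}=|u|_{H^s}$, which is governed by the analytic-type derivative bound assumed on $u$; choosing $s$ proportional to $p$ turns the factor $p^{-(s-1)}$ into super-algebraic decay that dominates $p^{\beta}$ and reproduces the exponential form of $\delta$. Equivalently, and perhaps more transparently, one can replace $\widehat{\Pi}^{\mathrm{MS}}_p w$ by an explicit $W^{1,\infty}$-stable polynomial lifting of the skeleton defect $Tw$: the vertex values $w(\widehat V)$, the edge minimizers, and (for $d=3$) the face minimizers of $w$ are each controlled by $\delta$ in the weighted norms of Definition~\ref{bernkopf_melenk_definition:approximation_operator_pi_hat}, and a polynomial extension operator uniformly bounded in $p$ from these weighted skeleton norms into $W^{1,\infty}(\widehat K)$ then gives the correction directly. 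In either route the genuinely delicate point is the uniform-in-$p$ $W^{1,\infty}$ bound, and the weights $p$, $p^2$ built into the minimizations of Definition~\ref{bernkopf_melenk_definition:approximation_operator_pi_hat} are tuned precisely so that these $p$-powers cancel; in three dimensions this forces the careful vertex$\to$edge$\to$face$\to$interior bookkeeping ensuring that the liftings remain compatible and the $p$-weights match.
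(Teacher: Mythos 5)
A preliminary caveat: the paper does not prove this proposition at all --- it is imported verbatim from \cite[Lemma~C.3]{melenk-sauter10} --- so there is no internal proof to compare against, and your attempt has to stand on its own. Its algebraic half does stand: since the constrained minimizations in Definition~\ref{bernkopf_melenk_definition:approximation_operator_pi_hat} involve only the differences $u-\pi$ together with constraints that are linear in $u$, they are invariant under shifting both $u$ and the trial polynomial by a fixed element of $\mathcal{P}_p(\widehat K)$. Hence, with $w=u-\pi_\ast$, your polynomial $\pi=\pi_\ast+\widehat{\Pi}^{\mathrm{MS}}_p w$ indeed inherits the restriction property for $u$ from that of $\widehat{\Pi}^{\mathrm{MS}}_p w$ for $w$, and $u-\pi=(\mathrm{Id}-\widehat{\Pi}^{\mathrm{MS}}_p)w$. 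This shift trick is the same mechanism the paper itself uses in the proof of Theorem~\ref{bernkopf_melenk_theorem:properties_of_pi_hat}\,\eqref{bernkopf_melenk_eq:properties_of_pi_hat_4}, and it is a genuinely different (and arguably slicker) route than correcting the boundary data by hand.

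The quantitative half, however, has a genuine gap, in two respects. First, the invocation of Proposition~\ref{bernkopf_melenk_proposition:theorem_b_4_melenk_sauter_math_comp} with $s\ge p+2$, or $s$ proportional to $p$, is inadmissible: that proposition requires $p\ge s-1$, i.e.\ $s\le p+1$, and its constant depends on $s$, so nothing is gained by letting $s$ grow with $p$ (this uncontrolled $s$-dependence is precisely why analytic-type functions need Propositions~\ref{bernkopf_melenk_proposition:lemma_c_2_melenk_sauter_math_comp} and~\ref{bernkopf_melenk_proposition:lemma_c_3_melenk_sauter_math_comp} as separate results). Second, and more fundamentally, no choice of $s$ can achieve what you are aiming for: the first term in your own bound, $\bernkopfMelenkNorm{w}_{H^1(\widehat K)}\le C\delta$, is already of size $\delta$, so after the inverse estimate $\bernkopfMelenkNorm{q}_{W^{1,\infty}(\widehat K)}\lesssim p^{\beta}\bernkopfMelenkNorm{q}_{H^1(\widehat K)}$ you are left with $p^{\beta}\delta$ no matter how small you make the second term. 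The missing idea is that $p^{\beta}\delta$ suffices: algebraic powers of $p$ can be absorbed into the exponentially decaying brackets by decreasing $\sigma$, say to $\sigma/2$, because the hypotheses give $h/R\le\tilde C$ and $h\kappa/p\le\tilde C$, whence $p^{\beta}\bigl(\tfrac{\sigma/2+h/R}{\sigma+h/R}\bigr)^{p+1}$ and $p^{\beta}2^{-(p+1)}$ are uniformly bounded; this is exactly the ``absorb the algebraic factor into the exponential factor'' step that closes the paper's proof of Theorem~\ref{bernkopf_melenk_theorem:properties_of_pi_hat}. With that step your argument closes cleanly and elementarily: take the fixed value $s=2>d/2$, so that quasi-optimality and $p\ge 1$ give $\bernkopfMelenkNorm{\widehat{\Pi}^{\mathrm{MS}}_p w}_{H^1(\widehat K)}\lesssim \bernkopfMelenkNorm{w}_{H^1(\widehat K)}+|w|_{H^2(\widehat K)}\lesssim\bernkopfMelenkNorm{w}_{W^{2,\infty}(\widehat K)}\le\delta$ (this is where the second-order control of Proposition~\ref{bernkopf_melenk_proposition:lemma_c_2_melenk_sauter_math_comp} is genuinely needed), hence $\bernkopfMelenkNorm{u-\pi}_{W^{1,\infty}(\widehat K)}\lesssim p^{\beta}\delta$, and absorption finishes. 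Your alternative route via $p$-uniform, $W^{1,\infty}$-stable polynomial liftings of the skeleton defect is only asserted, not constructed; such liftings are a nontrivial result in themselves, so as written that branch is not a proof.
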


  It is not clear whether the polynomial $\widehat{\Pi}^{\mathrm{MS}}_p u$ has the same approximation properties as the polynomial given by Proposition~\ref{bernkopf_melenk_proposition:lemma_c_3_melenk_sauter_math_comp}.
  However, it is desirable to have both the simultaneous approximation properties in $L^2(\widehat{K})$ and $H^1(\widehat{K})$ as stated in Proposition~\ref{bernkopf_melenk_proposition:theorem_b_4_melenk_sauter_math_comp} as well as
  the exponential approximation properties of an analytic function as stated in Proposition~\ref{bernkopf_melenk_proposition:lemma_c_3_melenk_sauter_math_comp}.
  In the following we will show that the operator $\widehat{\Pi}_p$ constructed in Definition~\ref{bernkopf_melenk_definition:approximation_operator_pi_hat} has these properties.
%

\begin{theorem}[Properties of $\widehat{\Pi}_p$]\label{bernkopf_melenk_theorem:properties_of_pi_hat}
  Let $\widehat{K}$ be the reference triangle or reference tetrahedron.
  Let $s>d/2$.
  Let $\widehat{\Pi}_p \colon H^s(\widehat{K}) \to \mathcal{P}_p(\widehat{K})$ be given by Definition~\ref{bernkopf_melenk_definition:approximation_operator_pi_hat}.
  Then the following holds:
  \begin{enumerate}[(i)]
    \item \label{bernkopf_melenk_eq:properties_of_pi_hat_1} The operator $\widehat{\Pi}_p$ is linear and satisfies the restriction property of Definition~\ref{bernkopf_melenk_definition:restriction_property}.
    \item \label{bernkopf_melenk_eq:properties_of_pi_hat_2} The operator $\widehat{\Pi}_p$ preserves $\mathcal{P}_p(\widehat{K})$, i.e., $\widehat{\Pi}_p q = q$ for all $q \in \mathcal{P}_p(\widehat{K})$.
    \item \label{bernkopf_melenk_eq:properties_of_pi_hat_3} There exists $C_s>0$ (depending only on $s$ and $d$) such that
    \begin{equation*}
      p \bernkopfMelenkNorm{u - \widehat{\Pi}_p u}_{L^2(\widehat{K})} + \bernkopfMelenkNorm{u - \widehat{\Pi}_p u}_{H^1(\widehat{K})} \leq C_s  p^{-(s-1)}|u|_{H^s(\widehat{K})} \qquad \forall p \geq s-1.
    \end{equation*}
    \item \label{bernkopf_melenk_eq:properties_of_pi_hat_4} For given $\gamma$, $\tilde{C} > 0$, there exist constants $C_{A}$, $\sigma > 0$ that depend solely on $\gamma$ and $\tilde{C}$ such that the following is true:
    For any function $u$ and polynomial degree $p$ that satisfy the assumptions of Proposition~\ref{bernkopf_melenk_proposition:lemma_c_2_melenk_sauter_math_comp}
    there holds
    \begin{equation*}
      \bernkopfMelenkNorm{u - \widehat{\Pi}_p u}_{W^{1, \infty}(\widehat{K})} \leq C_A C_u \left[ \left(\frac{h/R}{\sigma + h/R}\right)^{p+1} + \left(\frac{h \kappa}{\sigma p}\right)^{p+1} \right].
    \end{equation*}
  \end{enumerate}
\end{theorem}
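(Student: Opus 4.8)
The plan is to exploit the variational characterization of $\widehat{\Pi}_p$ recorded in Remark~\ref{bernkopf_melenk_remark:simple_properties_of_pi_hat}: $\widehat{\Pi}_p u$ is the $|||\cdot|||$-orthogonal projection of $u$ onto the affine space $\mathcal{V}_p^u$ of all degree-$p$ polynomials satisfying the restriction property for $u$. Assertions (i) and (ii) are then almost immediate. Linearity and the restriction property in (i) are exactly what Remark~\ref{bernkopf_melenk_remark:simple_properties_of_pi_hat} establishes: the restriction property holds by construction (Steps 1--3), while linearity follows from the affine representation~\eqref{bernkopf_melenk_eq:affine-rep-Pi}. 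For (ii), if $q \in \mathcal{P}_p(\widehat{K})$ then $q$ itself satisfies the restriction property for $q$, so $q \in \mathcal{V}_p^q$; since $\widehat{\Pi}_p q$ minimizes $|||q - \pi|||$ over $\pi \in \mathcal{V}_p^q$ and the value $0$ is attained at $\pi = q$, we conclude $\widehat{\Pi}_p q = q$.

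The core observation underlying both (iii) and (iv) is that any competitor satisfying the restriction property for $u$ lies in $\mathcal{V}_p^u$ and therefore bounds $|||u - \widehat{\Pi}_p u|||$ from above via the minimizing property. For (iii) I would take the Melenk--Sauter operator $\widehat{\Pi}^{\mathrm{MS}}_p$ of Proposition~\ref{bernkopf_melenk_proposition:theorem_b_4_melenk_sauter_math_comp}, which satisfies the restriction property, as the competitor, so that $|||u - \widehat{\Pi}_p u||| \le |||u - \widehat{\Pi}^{\mathrm{MS}}_p u|||$. Since $|||v|||^2 = p^2\bernkopfMelenkNorm{v}_{L^2(\widehat{K})}^2 + \bernkopfMelenkNorm{v}_{H^1(\widehat{K})}^2$ is equivalent, up to the factor $\sqrt{2}$, to $p\bernkopfMelenkNorm{v}_{L^2(\widehat{K})} + \bernkopfMelenkNorm{v}_{H^1(\widehat{K})}$, the left-hand side controls $p\bernkopfMelenkNorm{u - \widehat{\Pi}_p u}_{L^2(\widehat{K})} + \bernkopfMelenkNorm{u - \widehat{\Pi}_p u}_{H^1(\widehat{K})}$, while the right-hand side is bounded by $C p^{-(s-1)}|u|_{H^s(\widehat{K})}$ by Proposition~\ref{bernkopf_melenk_proposition:theorem_b_4_melenk_sauter_math_comp}. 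This yields (iii).

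For (iv) I would use the polynomial $\pi^\ast$ furnished by Proposition~\ref{bernkopf_melenk_proposition:lemma_c_3_melenk_sauter_math_comp}, which satisfies the restriction property together with the exponential $W^{1,\infty}$ bound, as the competitor. The difficulty is that $\widehat{\Pi}_p u$ minimizes in the $L^2/H^1$-type norm $|||\cdot|||$, whereas the target estimate is in $W^{1,\infty}$, and the minimizing property does not transfer $W^{1,\infty}$ accuracy directly. To bridge this, I split $u - \widehat{\Pi}_p u = (u - \pi^\ast) + (\pi^\ast - \widehat{\Pi}_p u)$. The first term is exponentially small in $W^{1,\infty}$ by Proposition~\ref{bernkopf_melenk_proposition:lemma_c_3_melenk_sauter_math_comp}. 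For the second term, both $\pi^\ast$ and $\widehat{\Pi}_p u$ lie in $\mathcal{V}_p^u$, so the minimizing property gives $|||u - \widehat{\Pi}_p u||| \le |||u - \pi^\ast|||$ and hence, by the triangle inequality, $|||\pi^\ast - \widehat{\Pi}_p u||| \le 2|||u - \pi^\ast|||$. Since $\widehat{K}$ is bounded we have $|||u - \pi^\ast||| \le C p \bernkopfMelenkNorm{u - \pi^\ast}_{W^{1,\infty}(\widehat{K})}$, and comparing with the $L^2$-part of the norm on the left yields $\bernkopfMelenkNorm{\pi^\ast - \widehat{\Pi}_p u}_{L^2(\widehat{K})} \le C\bernkopfMelenkNorm{u - \pi^\ast}_{W^{1,\infty}(\widehat{K})}$. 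As $\pi^\ast - \widehat{\Pi}_p u$ is a polynomial of degree at most $p$, a standard inverse estimate gives $\bernkopfMelenkNorm{\pi^\ast - \widehat{\Pi}_p u}_{W^{1,\infty}(\widehat{K})} \le C p^{\beta}\bernkopfMelenkNorm{\pi^\ast - \widehat{\Pi}_p u}_{L^2(\widehat{K})}$ for a fixed $\beta = \beta(d)$, so that altogether $\bernkopfMelenkNorm{u - \widehat{\Pi}_p u}_{W^{1,\infty}(\widehat{K})} \le C p^{\beta}\bernkopfMelenkNorm{u - \pi^\ast}_{W^{1,\infty}(\widehat{K})}$.

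The main obstacle, and the concluding step, is absorbing the algebraic factor $p^{\beta}$ into the exponentially small bound of Proposition~\ref{bernkopf_melenk_proposition:lemma_c_3_melenk_sauter_math_comp}. Writing that bound with decay constant $\sigma_0$, I would use that for any fixed $\beta$ and any $\sigma < \sigma_0$ there is a constant $C$ with $p^{\beta}\bigl(t/(\sigma_0 + t)\bigr)^{p+1} \le C\bigl(t/(\sigma + t)\bigr)^{p+1}$ and $p^{\beta}\bigl(t/(\sigma_0 p)\bigr)^{p+1} \le C\bigl(t/(\sigma p)\bigr)^{p+1}$ uniformly in the admissible range of $t$, i.e.\ geometric decay dominates algebraic growth. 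Redefining $\sigma$ accordingly absorbs $p^{\beta}$ and produces the claimed estimate with adjusted constants $C_A$ and $\sigma$, completing (iv). I expect parts (i)--(iii) to be routine, with essentially all the work concentrated in the inverse-estimate-and-absorption argument of (iv).
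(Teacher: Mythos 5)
Your proposal is correct and follows essentially the same route as the paper: (i)--(iii) are handled identically (in particular (iii) via the minimization property against the competitor $\widehat{\Pi}^{\mathrm{MS}}_p u$), and (iv) rests, exactly as in the paper, on comparing with the polynomial furnished by Proposition~\ref{bernkopf_melenk_proposition:lemma_c_3_melenk_sauter_math_comp}, bounding the resulting polynomial difference via inverse estimates at the cost of an algebraic power of $p$, and absorbing that power into the exponential factor. The only cosmetic difference is how the polynomial difference is controlled: you use quasi-optimality in $|||\cdot|||$ (triangle inequality, constant $2$) plus a $W^{1,\infty}$--$L^2$ inverse estimate, giving a factor $p^{\beta}$, whereas the paper uses the affine representation $\widehat{\Pi}_p q - \widehat{\Pi}_p u = \widehat{\Pi}_{\mathcal{P}_p^0}(q-u)$ and a $W^{1,\infty}$--$H^1$ inverse estimate, giving $p^{d+1}$ --- both absorbed in the same way.
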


\noindent
\textbf{Idea}:
The crucial points of Theorem~\ref{bernkopf_melenk_theorem:properties_of_pi_hat} are items~\eqref{bernkopf_melenk_eq:properties_of_pi_hat_3} and~\eqref{bernkopf_melenk_eq:properties_of_pi_hat_4}.
To verify~\eqref{bernkopf_melenk_eq:properties_of_pi_hat_3} we will exploit the approximation properties of $\widehat{\Pi}^{\mathrm{MS}}_p$ given by Proposition~\ref{bernkopf_melenk_proposition:theorem_b_4_melenk_sauter_math_comp}
together with the fact that $\widehat{\Pi}_pu$ is the solution to a minimization problem.
To prove~\eqref{bernkopf_melenk_eq:properties_of_pi_hat_4} we use the affine projection representation~\eqref{bernkopf_melenk_eq:affine-rep-Pi} of $\widehat{\Pi}_p$
together with the approximation properties of polynomials satisfying the restriction property given in
Proposition~\ref{bernkopf_melenk_proposition:lemma_c_3_melenk_sauter_math_comp}.

\begin{proof}
	\smartqed
  Assertion~\eqref{bernkopf_melenk_eq:properties_of_pi_hat_1} is trivially satisfied due to the construction in Definition~\ref{bernkopf_melenk_definition:approximation_operator_pi_hat} and Remark~\ref{bernkopf_melenk_remark:simple_properties_of_pi_hat}.
  \\
  \noindent
  Assertion~\eqref{bernkopf_melenk_eq:properties_of_pi_hat_2} is also trivially satisfied, since for a given polynomial $q \in \mathcal{P}_p(\widehat{K})$ the norms in Definition~\ref{bernkopf_melenk_definition:approximation_operator_pi_hat} are minimized at $q$.
  \\
  \noindent
  To prove Assertion~\eqref{bernkopf_melenk_eq:properties_of_pi_hat_3} recall that Step~\ref{bernkopf_melenk_fix_volume} in Definition~\ref{bernkopf_melenk_definition:approximation_operator_pi_hat} is exactly the minimization of the norm in question, constrained to all polynomials satisfying the restriction property for $u$.
  Since $\widehat{\Pi}^{\mathrm{MS}}_p u$ given by Proposition~\ref{bernkopf_melenk_proposition:theorem_b_4_melenk_sauter_math_comp} also satisfies the restriction property we can immediately conclude for $p \ge s-1$ that
  \begin{align*}
    p \bernkopfMelenkNorm{u - \widehat{\Pi}_p u}_{L^2(\widehat{K})} + \bernkopfMelenkNorm{u - \widehat{\Pi}_p u}_{H^1(\widehat{K})}
    &\leq p \bernkopfMelenkNorm{u - \widehat{\Pi}^{\mathrm{MS}}_p u}_{L^2(\widehat{K})} + \bernkopfMelenkNorm{u - \widehat{\Pi}^{\mathrm{MS}}_p u}_{H^1(\widehat{K})} \\
    & \leq C_s  p^{-(s-1)}|u|_{H^s(\widehat{K})}.
  \end{align*}
  \\
  \noindent
  We turn to Assertion~\eqref{bernkopf_melenk_eq:properties_of_pi_hat_4}.
  Since polynomials up to degree $p$ are preserved under $\widehat{\Pi}_p$, we immediately have
  \begin{equation}\label{bernkopf_melenk_eq:foo-1}
    \bernkopfMelenkNorm{u - \widehat{\Pi}_p u}_{W^{1, \infty}(\widehat{K})} \leq \bernkopfMelenkNorm{u - q}_{W^{1, \infty}(\widehat{K})} + \bernkopfMelenkNorm{\widehat{\Pi}_p q - \widehat{\Pi}_p u}_{W^{1, \infty}(\widehat{K})},
  \end{equation}
  for any $q \in \mathcal{P}_p(\widehat{K})$.
  We estimate the second term in~\eqref{bernkopf_melenk_eq:foo-1}.
  We have seen in~\eqref{bernkopf_melenk_eq:affine-rep-Pi} that the operator $\widehat{\Pi}_p$ can be written as $\widehat{\Pi}_p u = \pi^u + \widehat{\Pi}_{\mathcal{P}_p^0}(u - \pi^u)$ for any $\pi^u \in \mathcal{V}_p^u$ (the affine space of polynomials with restriction property for $u$), where $\widehat{\Pi}_{\mathcal{P}_p^0}$ is the orthogonal projection onto $\mathcal{P}_p^0(\widehat{K}) \leq \mathcal{P}_p(\widehat{K})$, the space of polynomials vanishing on $\partial \widehat{K}$,
with respect to the norm $|||\cdot|||$.
  Therefore we have
  \begin{equation*}
    \widehat{\Pi}_p q - \widehat{\Pi}_p u = \pi^q - \pi^u + \widehat{\Pi}_{\mathcal{P}_p^0}(q - u + \pi^u - \pi^q)
  \end{equation*}
  for any $\pi^u \in \mathcal{V}_p^u$ and $\pi^q \in \mathcal{V}_p^q$.
  Selecting $q \in \mathcal{V}_p^u$ allows us to choose $\pi^u = \pi^q = q$, which immediately gives
  \begin{equation*}
    \widehat{\Pi}_p q - \widehat{\Pi}_p u = \widehat{\Pi}_{\mathcal{P}_p^0}(q - u)
  \end{equation*}
  for all $q \in \mathcal{V}_p^u$.
  Using the polynomial inverse estimates
$\bernkopfMelenkNorm{\pi}_{L^\infty(\Omega)} \leq C p^d \bernkopfMelenkNorm{\pi}_{L^2(\Omega)}$ for all $\pi \in \mathcal{P}_p(\widehat{K})$,
(see, e.g.,~\cite[Thm.~4.76]{schwab98} for the case $d = 2$),
we find
  \begin{equation*}
    \bernkopfMelenkNorm{\widehat{\Pi}_p q - \widehat{\Pi}_p u}_{W^{1, \infty}(\widehat{K})} = \bernkopfMelenkNorm{\widehat{\Pi}_{\mathcal{P}_p^0}(q - u)}_{W^{1, \infty}(\widehat{K})} \lesssim p^d \bernkopfMelenkNorm{\widehat{\Pi}_{\mathcal{P}_p^0}(q - u)}_{H^1(\widehat{K})}.
  \end{equation*}
  Since $\widehat{\Pi}_{\mathcal{P}_p^0}$ is the orthogonal projection with respect to the norm $|||\cdot|||$
we obtain
  \begin{equation*}
    p^d \bernkopfMelenkNorm{\widehat{\Pi}_{\mathcal{P}_p^0}(q - u)}_{H^1(\widehat{K})} \leq p^d |||q - u|||
\lesssim p^{d+1} \bernkopfMelenkNorm{q - u}_{W^{1,\infty}(\widehat{K})}.
  \end{equation*}
  We therefore conclude that
  \begin{equation*}
    \bernkopfMelenkNorm{u - \widehat{\Pi}_p u}_{W^{1, \infty}(\widehat{K})} \lesssim p^{d+1} \bernkopfMelenkNorm{u - q}_{W^{1, \infty}(\widehat{K})}
  \end{equation*}
  for all $q \in \mathcal{V}^u_p$.
  Proposition~\ref{bernkopf_melenk_proposition:lemma_c_3_melenk_sauter_math_comp} provides a polynomial $q \in \mathcal{V}_p^u$
  with the desired approximation properties.  Absorbing the algebraic factor $p^{d+1}$ into the exponential factor
  then yields the result.
  \qed
\end{proof}

\subsection{$\pmb{H}(\operatorname{div}, \Omega)$-conforming approximation operators}

In the following we will construct an approximation operator
$\pmb{\Pi}^{\operatorname{div},s}_p \colon \pmb{H}^s(\Omega) \to \pmb{\mathrm{BDM}}_p(\mathcal{T}_h) \subset \pmb{\mathrm{RT}}_p(\mathcal{T}_h)$ that features
the optimal convergence rates in $p$ simultaneously in $L^2(\Omega)$ and $\pmb{H}(\operatorname{div},\Omega)$ for $s > d/2$.
The operator will act elementwise.
First we consider any operator $\widehat{\pmb{\Pi}}^{\operatorname{div},s}_p \colon \pmb{H}^s(\widehat{K}) \to \pmb{\mathrm{BDM}}_p(\widehat{K}) \subset  \pmb{\mathrm{RT}}_p(\widehat{K})$
and define $\pmb{\Pi}^{\operatorname{div},s}_p$ on $\pmb{H}^s(\Omega)$ elementwise using the Piola transformation by
\begin{equation}
  \left.\kern-\nulldelimiterspace{\left(\pmb{\Pi}^{{\operatorname{div}},s}_p \pmb{\varphi}\right)}\vphantom{\big|} \right|_{K} \coloneqq
  \left[(\det F_K^\prime )^{-1} F_K^\prime \widehat{\pmb{\Pi}}^{\operatorname{div}, s}_p \left[ (\det F_K^\prime ) (F_K^\prime)^{-1} \pmb{\varphi} \circ F_K \right] \right] \circ F_K^{-1}.
\end{equation}
In order for $\pmb{\Pi}^{\operatorname{div},s}_p$ to map into the conforming finite element space
one has to select the operator $\widehat{\pmb{\Pi}}^{\operatorname{div}, s}_p$ correctly.
We choose $\widehat{\pmb{\Pi}}^{\operatorname{div}, s}_p \colon $ $ \pmb{H}^s(\widehat{K}) \to \mathcal{P}_p(\widehat{K})^d = \pmb{\mathrm{BDM}}_p(\widehat{K}) \subset \pmb{\mathrm{RT}}_p(\widehat{K})$
to be the componentwise application of $\widehat{\Pi}_p$ from Definition~\ref{bernkopf_melenk_definition:approximation_operator_pi_hat} and analyzed in Theorem~\ref{bernkopf_melenk_theorem:properties_of_pi_hat}:
\begin{equation}
  \left(\widehat{\pmb{\Pi}}^{\operatorname{div}, s}_p \pmb{\varphi}\right)_i \coloneqq \widehat{\Pi}_p \pmb{\varphi}_i, \qquad \text{for } i = 1, \dots, d.
\end{equation}
This choice will ensure the desired approximation properties, and will also map into the conforming finite element space due to the restriction property.
We will summarize and prove certain properties of the above constructed operators $\widehat{\pmb{\Pi}}^{\operatorname{div}, s}_p$ and $\pmb{\Pi}^{\operatorname{div}, s}_p$.
See \cite{melenk-sauter18} for a similar construction concerning the space $\pmb{H}(\mathrm{curl}, \Omega)$.
\begin{lemma}\label{bernkopf_melenk_lemma:reference_element_RT_interpolation_operator}
  Let $s > d/2$ and let the operators $\widehat{\pmb{\Pi}}^{\operatorname{div}, s}_p$ and $\pmb{\Pi}^{\operatorname{div}, s}_p$ be defined as above.
  Then there holds:
  \begin{enumerate}[(i)]
    \item The operator $\widehat{\pmb{\Pi}}^{\operatorname{div}, s}_p \colon  \pmb{H}^s(\widehat{K}) \to \pmb{\mathrm{BDM}}_p(\widehat{K}) \subset \pmb{\mathrm{RT}}_p(\widehat{K})$ satisfies for $p \geq s-1$
    \begin{equation}
      p \bernkopfMelenkNorm{\pmb{\varphi} - \widehat{\pmb{\Pi}}^{\operatorname{div}, s}_p \pmb{\varphi}}_{L^2(\widehat{K})} + \bernkopfMelenkNorm{\pmb{\varphi} - \widehat{\pmb{\Pi}}^{\operatorname{div}, s}_p \pmb{\varphi}}_{H^1(\widehat{K})} \lesssim p^{-(s-1)}|\pmb{\varphi}|_{H^s(\widehat{K})}.
    \end{equation}
    \item Under the assumptions Theorem~\ref{bernkopf_melenk_theorem:properties_of_pi_hat},~\eqref{bernkopf_melenk_eq:properties_of_pi_hat_4}
there holds for some constants $C_A$, $\sigma> 0$ independent of $p$, $h$, $R$
    \begin{equation*}
      \bernkopfMelenkNorm{\pmb{\varphi} - \widehat{\pmb{\Pi}}^{\operatorname{div}, s}_p \pmb{\varphi}}_{W^{1, \infty}(\widehat{K})} \leq C_A C_{\pmb{\varphi}} \left[ \left(\frac{h/R}{\sigma + h/R}\right)^{p+1} + \left(\frac{h \kappa}{\sigma p}\right)^{p+1} \right].
    \end{equation*}
    \item The operator $\pmb{\Pi}^{\operatorname{div}, s}_p$ defined on $\pmb{H}^s(\Omega)$ maps to the conforming space $\pmb{\mathrm{BDM}}_p(\mathcal{T}_h) \subset \pmb{\mathrm{RT}}_p(\mathcal{T}_h)$.
  \end{enumerate}
\end{lemma}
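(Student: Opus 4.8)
The plan is to treat items (i) and (ii) as routine componentwise consequences of Theorem~\ref{bernkopf_melenk_theorem:properties_of_pi_hat}, and to extract the real content from item (iii), the conformity statement, which I expect to be the only nontrivial point. The guiding observation is that $\widehat{\pmb{\Pi}}^{\operatorname{div},s}_p$ acts componentwise through the scalar operator $\widehat{\Pi}_p$, so all approximation estimates decouple over the $d$ components, whereas the $\pmb{H}(\operatorname{div},\Omega)$-conformity requires the restriction property of Definition~\ref{bernkopf_melenk_definition:restriction_property} to interact correctly with the Piola transform.

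For (i) I would use that $(\pmb{\varphi} - \widehat{\pmb{\Pi}}^{\operatorname{div},s}_p\pmb{\varphi})_i = \varphi_i - \widehat{\Pi}_p\varphi_i$ for $i = 1,\dots,d$. Since the vector $L^2$- and $H^1$-norms are the $\ell^2$-aggregates of the corresponding component norms, summing the scalar estimate of Theorem~\ref{bernkopf_melenk_theorem:properties_of_pi_hat},~\eqref{bernkopf_melenk_eq:properties_of_pi_hat_3} over $i$ and using $\sum_{i=1}^d |\varphi_i|_{H^s(\widehat{K})}^2 \simeq |\pmb{\varphi}|_{H^s(\widehat{K})}^2$ gives the claim. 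For (ii) the same splitting applies: each scalar component $\varphi_i$ inherits the hypotheses of Proposition~\ref{bernkopf_melenk_proposition:lemma_c_2_melenk_sauter_math_comp} with the same constant $C_{\pmb{\varphi}}$, because $\|\nabla^n \varphi_i\|_{L^2(\widehat{K})} \le \|\nabla^n \pmb{\varphi}\|_{L^2(\widehat{K})}$, so Theorem~\ref{bernkopf_melenk_theorem:properties_of_pi_hat},~\eqref{bernkopf_melenk_eq:properties_of_pi_hat_4} applied to each component and maximized over $i$ yields the stated $W^{1,\infty}$-bound.

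The substance lies in (iii). By construction $\widehat{\pmb{\Pi}}^{\operatorname{div},s}_p$ maps into $\mathcal{P}_p(\widehat{K})^d = \pmb{\mathrm{BDM}}_p(\widehat{K})$, so elementwise $(\pmb{\Pi}^{\operatorname{div},s}_p\pmb{\varphi})|_K$ is by definition the contravariant Piola transform of a $\pmb{\mathrm{BDM}}_p(\widehat{K})$-function and therefore lies in the physical $\pmb{\mathrm{BDM}}_p$ space on $K$. It remains to verify continuity of the normal trace across any interior face $f$ shared by two elements $K_+$ and $K_-$. Here I would invoke that, by Theorem~\ref{bernkopf_melenk_theorem:properties_of_pi_hat},~\eqref{bernkopf_melenk_eq:properties_of_pi_hat_1}, the operator $\widehat{\Pi}_p$ satisfies the restriction property; applied componentwise, this means that the full trace of $\widehat{\pmb{\Pi}}^{\operatorname{div},s}_p\widehat{\pmb{\varphi}}$ on any reference face $\hat{f}$ is determined solely by the trace of $\widehat{\pmb{\varphi}}$ on $\hat{f}$, and in particular its normal component is.

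Finally, I would combine this with the flux-preserving property of the contravariant Piola transform: up to the fixed surface Jacobian of the element map, the physical normal trace $(\pmb{\Pi}^{\operatorname{div},s}_p\pmb{\varphi})|_{K_\pm}\cdot\pmb{n}_f$ equals the reference normal trace of $\widehat{\pmb{\Pi}}^{\operatorname{div},s}_p$ applied to the Piola pullback of $\pmb{\varphi}$ from $K_\pm$. Because $\pmb{\varphi} \in \pmb{H}^s(\Omega)$ with $s > d/2$ has a well-defined, single-valued trace on $f$, the two reference-face pullbacks encode the same face-intrinsic data; by the restriction property the corresponding reference-face normal traces of the approximant then coincide, and hence so do the physical normal traces from $K_+$ and $K_-$. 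This gives $\pmb{\Pi}^{\operatorname{div},s}_p\pmb{\varphi} \in \pmb{\mathrm{BDM}}_p(\mathcal{T}_h) \subset \pmb{\mathrm{RT}}_p(\mathcal{T}_h)$. I expect the main obstacle to be exactly this last matching step: one must check that the Piola pullbacks and the restriction property interact so that only the single-valued face data of $\pmb{\varphi}$ enters the normal trace, which is what guarantees that the assembled function is genuinely $\pmb{H}(\operatorname{div},\Omega)$-conforming rather than merely piecewise polynomial.
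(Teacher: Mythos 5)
Your treatment of (i) and (ii) matches the paper's: both follow componentwise from Theorem~\ref{bernkopf_melenk_theorem:properties_of_pi_hat}, items~\eqref{bernkopf_melenk_eq:properties_of_pi_hat_3} and~\eqref{bernkopf_melenk_eq:properties_of_pi_hat_4}, and your overall route for (iii) --- local $\pmb{\mathrm{BDM}}_p$ membership by construction, then inter-element continuity of the normal trace via the restriction property and the flux-preserving Piola transform --- is also the paper's. However, your argument for (iii) has a genuine gap at precisely the step you flag as ``the main obstacle'' and then do not carry out. You justify the matching of normal traces by asserting that ``the two reference-face pullbacks encode the same face-intrinsic data,'' and you invoke the restriction property only in the form ``the full trace of the approximant on a reference face is determined by the full trace of the pullback on that face.'' That justification fails: the contravariant Piola pullbacks of $\pmb{\varphi}$ from the two neighbouring elements $K_+$ and $K_-$ do \emph{not} agree as vector fields on the shared reference face --- only their normal components are reconciled (up to the surface Jacobian); their tangential components in general differ because the two element maps differ. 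Consequently, knowing that the approximant's full face trace is determined by the full face data of the input is not enough to conclude that the two normal traces coincide.

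What is needed --- and what the paper establishes by direct computation --- is the stronger commutation identity $\gamma_{\hat{\pmb{n}}_{\hat{f}}}\bigl(\widehat{\pmb{\Pi}}^{\operatorname{div},s}_p\pmb{\varphi}\bigr) = \widehat{\Pi}_p\bigl(\gamma_{\hat{\pmb{n}}_{\hat{f}}}\pmb{\varphi}\bigr)$: the normal trace of the approximant depends only on the \emph{normal} trace of the input, through the lower-dimensional operator $\widehat{\Pi}_p$ acting on the face. This follows from the componentwise restriction property together with two facts your write-up never uses: the reference normal $\hat{\pmb{n}}_{\hat{f}}$ is \emph{constant} on each face of the simplex, and $\widehat{\Pi}_p$ is \emph{linear}, so dotting the componentwise approximant with the constant vector $\hat{\pmb{n}}_{\hat{f}}$ can be pulled inside the operator. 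With this identity in hand, the single-valuedness of the physical normal flux of $\pmb{\varphi}$ --- which is what the Piola transform preserves, via the normal transformation rule $\pmb{n}\circ F_K = (F_K^\prime)^{-T}\hat{\pmb{n}}/\lVert (F_K^\prime)^{-T}\hat{\pmb{n}}\rVert$ --- does imply equality of the two approximant fluxes across the face, and conformity follows. Your proof becomes complete once this computation is inserted; without it, the matching step rests on a claim about face data that is false for the tangential part.
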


\begin{proof}
	\smartqed
  The first two assertions hold by construction and
  Theorem~\ref{bernkopf_melenk_theorem:properties_of_pi_hat}, properties~\eqref{bernkopf_melenk_eq:properties_of_pi_hat_3},~\eqref{bernkopf_melenk_eq:properties_of_pi_hat_4}.
  To prove the third assertion, note that $\widehat{\pmb{\Pi}}^{\operatorname{div}, s}_p$ maps to $\pmb{\mathrm{BDM}}_p(\widehat{K})$ so that
  \begin{equation}
    (\det F_K^\prime) (F_K^\prime)^{-1}   \left.\kern-\nulldelimiterspace{\left(\pmb{\Pi}^{{\operatorname{div}},s}_p \pmb{\varphi}\right)}\vphantom{\big|} \right|_{K} \circ F_K \in \pmb{\mathrm{BDM}}_p(\widehat{K}) \qquad \text{ for all } K \in \mathcal{T}_h,
  \end{equation}
  by construction.
  We are therefore left with verifying that $\pmb{\Pi}^{{\operatorname{div}},s}_p \pmb{\varphi} \in \pmb{H}(\operatorname{div}, \Omega)$.
  Since $\pmb{\Pi}^{{\operatorname{div}},s}_p \pmb{\varphi}$ is piecewise smooth it suffices to show inter element continuity of the normal trace.
  We will first show that the normal trace of $\widehat{\pmb{\Pi}}^{\operatorname{div}, s}_p \pmb{\varphi}$ in fact only depends on the normal trace of $\pmb{\varphi}$.
  Consider a face $\hat{f}$ of $\widehat{K}$.
	Let $\gamma_{\hat{\pmb{n}}_{\hat{f}}}$ denote the normal trace for the face $\hat{f}$.
  We calculate
  \begin{align*}
    \gamma_{\hat{\pmb{n}}_{\hat{f}}}\left( \widehat{\pmb{\Pi}}^{\operatorname{div}, s}_p \pmb{\varphi} \right)
    &= \left.\kern-\nulldelimiterspace{\left(\widehat{\pmb{\Pi}}^{\operatorname{div}, s}_p \pmb{\varphi} \right)}\vphantom{\big|} \right|_{\hat{f}} \cdot \hat{\pmb{n}}_{\hat{f}}
    = \left.\kern-\nulldelimiterspace{ \begin{pmatrix} \widehat{\Pi}_p \pmb{\varphi}_1  \\ \vdots \\ \widehat{\Pi}_p \pmb{\varphi}_d \end{pmatrix} }\vphantom{\big|} \right|_{\hat{f}} \cdot \hat{\pmb{n}}_{\hat{f}} \\
    &= \begin{pmatrix} \widehat{\Pi}_p ( \left.\kern-\nulldelimiterspace{\pmb{\varphi}_1}\vphantom{\big|} \right|_{\hat{f}} )  \\ \vdots \\ \widehat{\Pi}_p ( \left.\kern-\nulldelimiterspace{\pmb{\varphi}_d}\vphantom{\big|} \right|_{\hat{f}} ) \end{pmatrix} \cdot \hat{\pmb{n}}_{\hat{f}}
    = \widehat{\Pi}_p ( \pmb{\varphi} \cdot \hat{\pmb{n}}_{\hat{f}} )
    = \widehat{\Pi}_p ( \gamma_{\hat{\pmb{n}}_{\hat{f}}}{\pmb{\varphi}} ).
  \end{align*}
  Here we used that the operator $\widehat{\Pi}_p$ satisfies the restriction property and the fact that $\hat{\pmb{n}}_{\hat{f}}$ is constant on $\hat{f}$.
  Furthermore note that we abused notation in that the symbol $\widehat{\Pi}_p$ is used both for the $d$ dimensional as well as the $d-1$ dimensional version.
  We conclude the proof using the fact that if $\hat{\pmb{n}}$ is the unit outward normal to $\widehat{K}$ the vector $\pmb{n}$ on $K$ given by
  \begin{equation*}
    \pmb{n} \circ F_K = \frac{1}{ \lVert (F_K^\prime)^{-T} \hat{\pmb{n}} \rVert} (F_K^\prime)^{-T} \hat{\pmb{n}}
  \end{equation*}
  is a unit normal to $K$, see, e.g.,~\cite[Section 3.9 and 5.4]{monk03}.
  \qed
\end{proof}
We have $p$-optimal approximation properties on the reference element $\widehat{K}$ by the operator $\widehat{\pmb{\Pi}}^{\operatorname{div}, s}_p$.

%

\begin{corollary}[Approximation of $H^s(\Omega)$ functions]\label{bernkopf_melenk_corollary:global_RT_interpolation_operator}
  For $d = 2,3$ and $s > d/2$ the operator $\pmb{\Pi}^{\operatorname{div}, s}_p \colon \pmb{H}^s(\Omega) \to \pmb{\mathrm{BDM}}_p(\mathcal{T}_h) \subset \pmb{\mathrm{RT}}_p(\mathcal{T}_h)$ satisfies
  \begin{equation*}
    \frac{p}{h} \bernkopfMelenkNorm{\pmb{\varphi} - \pmb{\Pi}^{\operatorname{div}, s}_p \pmb{\varphi}}_{L^2(\Omega)} + \bernkopfMelenkNorm{\pmb{\varphi} - \pmb{\Pi}^{\operatorname{div}, s}_p \pmb{\varphi}}_{H^1(\mathcal{T}_h)} \lesssim \left(\frac{h}{p}\right)^{s-1} \bernkopfMelenkNorm{\pmb{\varphi}}_{H^s(\Omega)} \quad \forall p \ge s-1,
  \end{equation*}
	where $\bernkopfMelenkNorm{\cdot}_{H^1(\mathcal{T}_h)}$ denotes the broken $H^1$-norm.
\end{corollary}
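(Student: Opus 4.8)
The plan is to reduce the estimate to the reference element and then transfer the three norms by the scaling behaviour of the Piola transformation encoded in Assumption~\ref{bernkopf_melenk_assumption:quasi_uniform_regular_meshes}. The starting point is that $\pmb{\Pi}^{\operatorname{div},s}_p$ was defined precisely so as to commute with the (contravariant) Piola transformation. Writing $\widehat{\pmb{\varphi}} = (\det F_K')(F_K')^{-1}\pmb{\varphi}\circ F_K$ for the Piola transform of $\pmb{\varphi}|_K$, a one-line computation from the definition of $\pmb{\Pi}^{\operatorname{div},s}_p$ shows that the Piola transform of $(\pmb{\Pi}^{\operatorname{div},s}_p\pmb{\varphi})|_K$ equals $\widehat{\pmb{\Pi}}^{\operatorname{div},s}_p\widehat{\pmb{\varphi}}$. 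Hence the Piola transform of the elementwise error is the reference error $\widehat{\pmb{e}} \coloneqq \widehat{\pmb{\varphi}} - \widehat{\pmb{\Pi}}^{\operatorname{div},s}_p\widehat{\pmb{\varphi}}$, to which Lemma~\ref{bernkopf_melenk_lemma:reference_element_RT_interpolation_operator}(i) applies and gives $p\bernkopfMelenkNorm{\widehat{\pmb{e}}}_{L^2(\widehat{K})} + \bernkopfMelenkNorm{\widehat{\pmb{e}}}_{H^1(\widehat{K})}\lesssim p^{-(s-1)}|\widehat{\pmb{\varphi}}|_{H^s(\widehat{K})}$ for $p\ge s-1$.

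It then remains to transfer norms between $\widehat{K}$ and $K$. Using $F_K = R_K\circ A_K$ with $\bernkopfMelenkNorm{A_K'}_{L^\infty}\lesssim h$, $\bernkopfMelenkNorm{(A_K')^{-1}}_{L^\infty}\lesssim h^{-1}$ and the $K$-uniform analyticity bounds on $R_K$, one obtains $|\det F_K'|\sim h^d$, $\bernkopfMelenkNorm{F_K'}\lesssim h$, $\bernkopfMelenkNorm{(F_K')^{-1}}\lesssim h^{-1}$, together with analogous $K$-uniform bounds on all derivatives of these geometric quantities. Applying the inverse Piola transformation to $\widehat{\pmb{e}}$ yields the backward scalings $\bernkopfMelenkNorm{\pmb{\varphi}-\pmb{\Pi}^{\operatorname{div},s}_p\pmb{\varphi}}_{L^2(K)}\lesssim h^{1-d/2}\bernkopfMelenkNorm{\widehat{\pmb{e}}}_{L^2(\widehat{K})}$ and, after the chain rule (the geometric term, in which one derivative falls on $(\det F_K')^{-1}F_K'$, contributing only a lower-order $L^2$ piece), $\bernkopfMelenkNorm{\pmb{\varphi}-\pmb{\Pi}^{\operatorname{div},s}_p\pmb{\varphi}}_{H^1(K)}\lesssim h^{-d/2}\bernkopfMelenkNorm{\widehat{\pmb{e}}}_{H^1(\widehat{K})}$; the forward Piola transformation applied to $\pmb{\varphi}$ gives $|\widehat{\pmb{\varphi}}|_{H^s(\widehat{K})}\lesssim h^{s+d/2-1}\bernkopfMelenkNorm{\pmb{\varphi}}_{H^s(K)}$. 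The decisive bookkeeping point is that when differentiating the Piola factor, each derivative falling on the geometry contributes the same power of $h$ as a derivative falling on $\pmb{\varphi}$, so that all $s$-th order terms carry the single power $h^{s+d/2-1}$, the generated lower-order contributions being absorbed into the full norm $\bernkopfMelenkNorm{\pmb{\varphi}}_{H^s(K)}$ on the right.

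Combining these scalings on a fixed $K$ lets the factor $h^{-d/2}$ come out in front,
\begin{equation*}
  \frac{p}{h} \bernkopfMelenkNorm{\pmb{\varphi}-\pmb{\Pi}^{\operatorname{div},s}_p\pmb{\varphi}}_{L^2(K)} + \bernkopfMelenkNorm{\pmb{\varphi}-\pmb{\Pi}^{\operatorname{div},s}_p\pmb{\varphi}}_{H^1(K)} \lesssim h^{-d/2}\left( p\bernkopfMelenkNorm{\widehat{\pmb{e}}}_{L^2(\widehat{K})} + \bernkopfMelenkNorm{\widehat{\pmb{e}}}_{H^1(\widehat{K})}\right),
\end{equation*}
and inserting the reference estimate together with the forward bound on $|\widehat{\pmb{\varphi}}|_{H^s(\widehat{K})}$ collapses the $h$-powers to exactly $(h/p)^{s-1}\bernkopfMelenkNorm{\pmb{\varphi}}_{H^s(K)}$. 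Squaring and summing over all $K\in\mathcal{T}_h$ (an $\ell^2$ summation, using $\bernkopfMelenkNorm{\cdot}_{H^1(\mathcal{T}_h)}^2 = \sum_K \bernkopfMelenkNorm{\cdot}_{H^1(K)}^2$ and $\sum_K \bernkopfMelenkNorm{\pmb{\varphi}}_{H^s(K)}^2 = \bernkopfMelenkNorm{\pmb{\varphi}}_{H^s(\Omega)}^2$) then yields the claim. The main obstacle is entirely in the second paragraph: tracking the Piola scaling of the $H^1$ and $H^s$ norms under the genuinely non-affine map $F_K$, ensuring that the derivatives of the analytic geometry $R_K$ produce only $K$-uniform constants and matching $h$-powers and that the generated lower-order terms do not degrade the rate; for non-integer $s$ this step is most cleanly justified by interpolating the integer-order scaling estimates.
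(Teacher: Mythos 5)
Your proposal is correct and follows exactly the route the paper takes: the paper's proof of this corollary is the one-line statement that it ``follows from Lemma~\ref{bernkopf_melenk_lemma:reference_element_RT_interpolation_operator} together with a scaling argument,'' and your argument is precisely that scaling argument carried out in detail (pull back by the Piola transform, apply the reference-element estimate, track the $h$-powers of the Piola factors under the non-affine maps of Assumption~\ref{bernkopf_melenk_assumption:quasi_uniform_regular_meshes}, and sum in $\ell^2$ over the elements). Your bookkeeping of the geometric derivatives and the absorption of lower-order terms into the full norm $\bernkopfMelenkNorm{\pmb{\varphi}}_{H^s(K)}$ is exactly what the paper leaves implicit.
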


\begin{proof}
	\smartqed
	The proof follows from Lemma~\ref{bernkopf_melenk_lemma:reference_element_RT_interpolation_operator} together with a scaling argument.
  \qed
\end{proof}

\begin{corollary}[Approximation of analytic functions]\label{bernkopf_melenk_corollary:global_RT_interpolation_operator_analytic}
  Let $\pmb{\varphi}$ satisfy, for some $C_{\pmb{\varphi}}$, $\gamma>0$,
  \begin{equation*}
    \bernkopfMelenkNorm{\nabla^n \pmb{\varphi}}_{L^2(\Omega)} \leq C_{\pmb{\varphi}} \gamma^n \max(n, k)^n \qquad \forall n \in \mathbb{N}_{0}.
  \end{equation*}
There exist $C$, $\sigma > 0$ independent of $h$, $p$, and $k$ such that
  \begin{align*}
    \bernkopfMelenkNorm{\pmb{\varphi} - \pmb{\Pi}^{\operatorname{div}, s}_p \pmb{\varphi}}_{H^1(\mathcal{T}_h)} &+ k \bernkopfMelenkNorm{\pmb{\varphi} - \pmb{\Pi}^{\operatorname{div}, s}_p \pmb{\varphi}}_{L^2(\Omega)} \\
    &\leq C C_{\pmb{\varphi}} \left[ \left(\frac{h}{h + p}\right)^p \left(1 + \frac{h k}{h + \sigma}\right) + k \left(\frac{k h}{\sigma p}\right)^p \left(\frac{1}{p} + \frac{k h}{\sigma p}\right) \right].
  \end{align*}
\end{corollary}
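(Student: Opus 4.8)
The plan is a localize--scale--sum argument, structurally identical to the proof of Corollary~\ref{bernkopf_melenk_corollary:global_RT_interpolation_operator}, except that we now feed in the \emph{analytic} reference-element estimate of Lemma~\ref{bernkopf_melenk_lemma:reference_element_RT_interpolation_operator}(ii) rather than the finite-regularity one. Since $\pmb{\Pi}^{\operatorname{div},s}_p$ is defined elementwise through the Piola transform and the broken norm $\bernkopfMelenkNorm{\cdot}_{H^1(\mathcal{T}_h)}$ decouples over elements, it suffices to estimate $\pmb{e} := \pmb{\varphi} - \pmb{\Pi}^{\operatorname{div},s}_p \pmb{\varphi}$ on a single $K$ and then form $\ell^2$-sums over $\mathcal{T}_h$.

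First I would transfer the analytic regularity of $\pmb{\varphi}$ to $\widehat{K}$. Writing $\widehat{\pmb{\varphi}} = (\det F_K')(F_K')^{-1}\pmb{\varphi}\circ F_K$ for the Piola pullback and $F_K = R_K \circ A_K$, Assumption~\ref{bernkopf_melenk_assumption:quasi_uniform_regular_meshes} supplies $\bernkopfMelenkNorm{A_K'}\sim h$, $|\det F_K'|\sim h^d$, and the analytic control $\bernkopfMelenkNorm{\nabla^n R_K}\lesssim \gamma^n n!$. A Fa\`a di Bruno/Leibniz estimate then converts the hypothesis $\bernkopfMelenkNorm{\nabla^n \pmb{\varphi}}_{L^2(\Omega)} \le C_{\pmb{\varphi}}\gamma^n \max(n,k)^n$ into the form required by Proposition~\ref{bernkopf_melenk_proposition:lemma_c_2_melenk_sauter_math_comp}, namely $\bernkopfMelenkNorm{\nabla^n \widehat{\pmb{\varphi}}}_{L^2(\widehat{K})} \lesssim C_{\widehat{\varphi},K}(\tilde\gamma h)^n \max\{n,k\}^n$, with $R = O(1)$, $\kappa = k$, the element diameter playing the role of $h$, and $C_{\widehat{\varphi},K}\sim h^{d/2-1}$ times a local amplitude of $\pmb{\varphi}$; the required $h/R + \kappa h/p \le \tilde C$ is the implicit scale-resolution condition under which the estimate is meaningful. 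Because $\widehat{\pmb{\Pi}}^{\operatorname{div},s}_p$ is the componentwise $\widehat{\Pi}_p$, the scalar estimate is applied to each $\widehat{\pmb{\varphi}}_i$.

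With these hypotheses Lemma~\ref{bernkopf_melenk_lemma:reference_element_RT_interpolation_operator}(ii) gives $\bernkopfMelenkNorm{\widehat{\pmb{e}}}_{W^{1,\infty}(\widehat{K})} \le C_A C_{\widehat{\varphi},K} B$ with $B = (\frac{h/R}{\sigma + h/R})^{p+1} + (\frac{h\kappa}{\sigma p})^{p+1}$. Scaling back through the Piola transform and the metric bounds yields $\bernkopfMelenkNorm{\pmb{e}}_{L^2(K)} \lesssim h^{1-d/2}\bernkopfMelenkNorm{\widehat{\pmb{e}}}_{L^2(\widehat{K})}$ and $\bernkopfMelenkNorm{\pmb{e}}_{H^1(K)} \lesssim h^{-d/2}\bernkopfMelenkNorm{\widehat{\pmb{e}}}_{H^1(\widehat{K})}$, and since $|\widehat{K}| \lesssim 1$ one passes to $\bernkopfMelenkNorm{\widehat{\pmb{e}}}_{W^{1,\infty}(\widehat{K})}$. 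Together with $C_{\widehat{\varphi},K}\sim h^{d/2-1}$ this leaves a factor $h^{-1}$ on the $H^1(K)$-part and $O(1)$ on the $L^2(K)$-part. Then $\ell^2$-summing over $K$---using quasi-uniformity and the per-order identities $\bernkopfMelenkNorm{\nabla^n \pmb{\varphi}}_{L^2(\Omega)}^2 = \sum_K \bernkopfMelenkNorm{\nabla^n \pmb{\varphi}}_{L^2(K)}^2$, which make the local amplitudes $\ell^2$-summable to $C_{\pmb{\varphi}}$---gives $\bernkopfMelenkNorm{\pmb{e}}_{H^1(\mathcal{T}_h)} \lesssim h^{-1}C_{\pmb{\varphi}}B$ and $\bernkopfMelenkNorm{\pmb{e}}_{L^2(\Omega)} \lesssim C_{\pmb{\varphi}}B$, with no element-count penalty. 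Finally $\bernkopfMelenkNorm{\pmb{e}}_{H^1(\mathcal{T}_h)} + k\bernkopfMelenkNorm{\pmb{e}}_{L^2(\Omega)} \lesssim (h^{-1}+k)C_{\pmb{\varphi}}B = h^{-1}(1+hk)C_{\pmb{\varphi}}B$; distributing $h^{-1}(1+hk)$ over the two terms of $B$, where in each base one power of $h$ (from $\frac{h}{\sigma+h}$, resp.\ $\frac{hk}{\sigma p}$) cancels the $h^{-1}$, and shifting the exponent $p+1$ by one, reproduces precisely the bracketed expression with its prefactors $1 + \frac{hk}{h+\sigma}$ and $\frac{1}{p} + \frac{kh}{\sigma p}$, after absorbing the algebraic factor $p^{d+1}$ (already built into Lemma~\ref{bernkopf_melenk_lemma:reference_element_RT_interpolation_operator}(ii)) into $\sigma$.

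The main obstacle is the pullback step: verifying that composition with the analytic map $R_K$ and multiplication by the Piola matrix preserve the $\max\{n,k\}^n$ growth, so that Proposition~\ref{bernkopf_melenk_proposition:lemma_c_2_melenk_sauter_math_comp} genuinely applies with $\kappa = k$ and $R = O(1)$, and not with a weaker $k^n$ or $n!$ growth that would destroy the $k$-explicit rate. A secondary point requiring care is the summation: the reference estimate must be used in its form that is linear in the local data, so that the local analyticity amplitudes are $\ell^2$-summable to the global $C_{\pmb{\varphi}}$; this is exactly what prevents a spurious factor equal to the number of elements and what makes the residual $h^{-1}$ cancel against the exponential bases to leave the clean bound.
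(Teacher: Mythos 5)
Your proposal is correct and follows essentially the same route as the paper's proof: elementwise Piola pullback split through $F_K = R_K \circ A_K$, transfer of the $\max(n,k)^n$ analytic bounds to the reference element (the paper handles your ``main obstacle'' by citing \cite[Lemma~4.3.1]{melenk02} as in \cite[Lemma~C.1]{melenk-sauter10}), application of Lemma~\ref{bernkopf_melenk_lemma:reference_element_RT_interpolation_operator}(ii) with $R=1$, $\kappa = k$, scaling back with the factors $h^{-q}$, and the same final distribution of $h^{-1}(1+hk)$ over the exponential bracket. Your $\ell^2$-summability device is exactly the paper's definition of the local amplitudes $C_K^2 \coloneqq \sum_{n\ge 0} \bernkopfMelenkNorm{\nabla^n\pmb{\varphi}}_{L^2(K)}^2 / (2\gamma\max(n,k))^{2n}$, which satisfy $\sum_K C_K^2 \le \tfrac{4}{3}C_{\pmb{\varphi}}^2$ and thereby avoid any element-count penalty.
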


\begin{proof}
	\smartqed
  We mimic the procedure of~\cite[Thm.~5.5]{melenk-sauter10} and~\cite[Lemma~4.7]{chen-qiu17}.
  First consider for each element $K \in \mathcal{T}_h$ the constant $C_K$ given by
  \begin{equation*}
    C_K^2 \coloneqq \sum_{n \geq 0} \frac{\bernkopfMelenkNorm{\nabla^n \pmb{\varphi}}_{L^2(K)}^2}{(2 \gamma \max(n, k))^{2n}},
  \end{equation*}
  which is finite by assumption.
  Note that we immediately have
  \begin{align*}
    \bernkopfMelenkNorm{\nabla^n \pmb{\varphi}}_{L^2(K)} &\leq 2^n \gamma^n \max(n,k)^n C_K, \\
    \sum_{K \in \mathcal{T}_h} C_K^2 &\leq \frac{4}{3} C_{\pmb{\varphi}}^2.
  \end{align*}
  We write $\widehat{\pmb{\varphi}}$ as
  \begin{align*}
    \widehat{\pmb{\varphi}}
    &= \det(F_K^\prime) (F_K^\prime)^{-1} \pmb{\varphi} \circ F_K
    = \det(R_K^\prime \circ A_K A_K^\prime) (R_K^\prime \circ A_K A_K^\prime)^{-1} \pmb{\varphi} \circ F_K \\
    & = \det (A_K^\prime) (A_K^\prime)^{-1} \tilde{\pmb{\varphi}} \circ A_K,
  \end{align*}
  with
  \begin{equation*}
    \tilde{\pmb{\varphi}} = \det(R_K^\prime) (R_K^\prime)^{-1} \pmb{\varphi} \circ R_K.
  \end{equation*}
  As in~\cite[Lemma~C.1]{melenk-sauter10} for simple changes of variables,
  we apply~\cite[Lemma~4.3.1]{melenk02} to the function $\tilde{\pmb{\varphi}}$ and obtain
  the existence of constants $\overline{\gamma}$, $C > 0$ depending additionally on the constants
  describing the analyticity of the map $R_K$ such that
  \begin{equation*}
    \bernkopfMelenkNorm{\nabla^n \tilde{\pmb{\varphi}}}_{L^2(\tilde{K})} \leq C \overline{\gamma}^n \max(n,k)^n C_K \qquad \forall n \in \mathbb{N}_{0}.
  \end{equation*}
  Since $A_K$ is affine we immediately deduce that
  \begin{equation*}
    \bernkopfMelenkNorm{\nabla^n \widehat{\pmb{\varphi}}}_{L^2(\widehat{K})}
		\lesssim h^{d/2-1} h^n \bernkopfMelenkNorm{\nabla^n \tilde{\pmb{\varphi}}}_{L^2(\tilde{K})}
		\leq h^{d/2-1}  (\overline{\gamma} h)^n \max(n,k)^n C_K \quad \forall n \in \mathbb{N}_{n \geq 1}.
  \end{equation*}
  Hence by Lemma~\ref{bernkopf_melenk_lemma:reference_element_RT_interpolation_operator} with $R = 1$ we have
  \begin{equation*}
    \bernkopfMelenkNorm{\widehat{\pmb{\varphi}} - \widehat{\pmb{\Pi}}^{\operatorname{div}, s}_p \widehat{\pmb{\varphi}}}_{W^{1, \infty}(\widehat{K})} \lesssim C_K h^{d/2-1} \left[ \left(\frac{h}{\sigma + h}\right)^{p+1} + \left(\frac{h k}{\sigma p}\right)^{p+1} \right]
  \end{equation*}
	for some $\sigma > 0$.
  By a change of variables there holds for $q = 0$, $1$
  \begin{align*}
    \bernkopfMelenkNorm{\pmb{\varphi} - \pmb{\Pi}^{\operatorname{div}, s}_p \pmb{\varphi}}_{H^q(K)} &\lesssim h^{-d/2 + 1 - q} \bernkopfMelenkNorm{\widehat{\pmb{\varphi}} - \widehat{\pmb{\Pi}}^{\operatorname{div}, s}_p \widehat{\pmb{\varphi}}}_{H^q(\widehat{K})} \\
    & \lesssim h^{-q} C_K \left[ \left(\frac{h}{\sigma + h}\right)^{p+1} + \left(\frac{h k}{\sigma p}\right)^{p+1} \right].
  \end{align*}
  Summation over all elements gives
  \begin{align*}
    & \bernkopfMelenkNorm{\pmb{\varphi} - \pmb{\Pi}^{\operatorname{div}, s}_p \pmb{\varphi}}_{H^1(\mathcal{T}_h)} + k \bernkopfMelenkNorm{\pmb{\varphi} - \pmb{\Pi}^{\operatorname{div}, s}_p \pmb{\varphi}}_{L^2(\Omega)} \\
    &\lesssim \left[ \left(\frac{h}{\sigma + h}\right)^{p} + k \left(\frac{h}{\sigma + h}\right)^{p+1} + \frac{k}{p} \left(\frac{h k}{\sigma p}\right)^{p} + k \left(\frac{h k}{\sigma p}\right)^{p+1} \right] \sqrt{\sum_{K \in \mathcal{T}_h} C_K^2} \\
    &\lesssim \left[ \left(\frac{h}{h + p}\right)^p \left(1 + \frac{h k}{h + \sigma}\right) + k \left(\frac{k h}{\sigma p}\right)^p \left(\frac{1}{p} + \frac{k h}{\sigma p}\right) \right] C_{\pmb{\varphi}},
  \end{align*}
  which completes the proof.
  \qed
\end{proof}

\section{A priori estimate}\label{bernkopf_melenk_section:a_priori_estimate}

We now turn to an {\sl a priori} estimate of the FOSLS method.
Again the proof follows the ideas of~\cite[Lemma~5.1]{chen-qiu17},
resting, however, on the refined duality argument given in Lemma~\ref{bernkopf_melenk_lemma:duality_argument}
and the approximation properties derived in Section~\ref{bernkopf_melenk_section:approximation_propertie_of_raviart_thomas}
to obtain the factor $h/p$. For the readers' convenience we recapitulate the important steps.
As in~\cite{melenk-sauter10} we show that this can be achieved under the conditions $kh/p$ sufficiently small
and $p$ of order $\log k$.

\begin{theorem}[A priori estimate]\label{bernkopf_melenk_theorem:a_priori_estimate}
Let Assumptions~\ref{bernkopf_melenk_assumption:general_assumptions},~\ref{bernkopf_melenk_assumption:quasi_uniform_regular_meshes} be valid.
Then there exist constants $c_1$, $c_2 > 0$ that are independent of $h$, $p$, and $k$ such that the conditions
  \begin{equation}\label{bernkopf_melenk_eq:scale_resolution_condition}
    \frac{kh}{p} \leq c_1 \qquad \text{and} \qquad p \geq c_2(\log k + 1)
  \end{equation}
  imply that the approximation $(\pmb{\varphi}_h, u_h)$ of the FOSLS method satisfies the following:
  For any $(\pmb{\psi}_h, v_h) \in \pmb{V}_h \times W_h$ there holds
  \begin{align*}
    \bernkopfMelenkNorm{u-u_h}_{L^2(\Omega)}
		&\lesssim
		\frac{h}{p}
    \Big(  \bernkopfMelenkNorm{ \nabla ( u - v_h )}_{L^2(\Omega)} +
		 				k \bernkopfMelenkNorm{ u - v_h }_{L^2(\Omega)} + \\
					 &\bernkopfMelenkNorm{ \nabla \cdot ( \pmb{\varphi} - \pmb{\psi}_h )}_{L^2(\Omega) } +
            k \bernkopfMelenkNorm{ \pmb{\varphi} - \pmb{\psi}_h }_{L^2(\Omega) } +
            k^{1/2} \bernkopfMelenkNorm{ ( \pmb{\varphi} - \pmb{\psi}_h ) \cdot \pmb{n} }_{L^2(\partial \Omega)}
		\Big).
  \end{align*}
\end{theorem}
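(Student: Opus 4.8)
The plan is to run an Aubin--Nitsche duality argument, with the novelty lying entirely in the wavenumber- and $hp$-explicit approximation of the dual solution. Write $\pmb{e} \coloneqq \pmb{\varphi} - \pmb{\varphi}_h \in \pmb{V}$ and $e_u \coloneqq u - u_h \in W$ for the errors, and recall that subtracting~\eqref{bernkopf_melenk_eq:fosls_method_discrete} from~\eqref{bernkopf_melenk_eq:fosls_method_continuous} gives the Galerkin orthogonality $b((\pmb{e}, e_u), (\pmb{\psi}_h, v_h)) = 0$ for all $(\pmb{\psi}_h, v_h) \in \pmb{V}_h \times W_h$. Since $b$ is Hermitian and, by Remark~\ref{bernkopf_melenk_remark:unique_solvability}, positive definite, it induces an energy norm $\bernkopfMelenkNorm{(\pmb{\varphi}, u)}_b \coloneqq b((\pmb{\varphi}, u), (\pmb{\varphi}, u))^{1/2}$, and the FOSLS method is the corresponding energy projection. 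First I would apply Lemma~\ref{bernkopf_melenk_lemma:duality_argument} to the error pair $(\pmb{e}, e_u)$ itself (so that $w = e_u$), obtaining $(\pmb{\psi}, v) \in \pmb{V} \times W$ with $\bernkopfMelenkNorm{e_u}_{L^2(\Omega)}^2 = b((\pmb{e}, e_u), (\pmb{\psi}, v))$. Inserting an arbitrary discrete approximation $(\pmb{\psi}_h^\ast, v_h^\ast) \in \pmb{V}_h \times W_h$ of the dual solution and using Galerkin orthogonality gives $\bernkopfMelenkNorm{e_u}_{L^2(\Omega)}^2 = b((\pmb{e}, e_u), (\pmb{\psi} - \pmb{\psi}_h^\ast, v - v_h^\ast))$, whence Cauchy--Schwarz for the inner product $b$ yields
\[
  \bernkopfMelenkNorm{e_u}_{L^2(\Omega)}^2 \le \bernkopfMelenkNorm{(\pmb{e}, e_u)}_b \, \bernkopfMelenkNorm{(\pmb{\psi} - \pmb{\psi}_h^\ast, v - v_h^\ast)}_b .
\]

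The first factor is controlled by quasi-optimality: as the method is the energy projection, $\bernkopfMelenkNorm{(\pmb{e}, e_u)}_b \le \bernkopfMelenkNorm{(\pmb{\varphi} - \pmb{\psi}_h, u - v_h)}_b$ for the arbitrary $(\pmb{\psi}_h, v_h)$ from the statement. Expanding the energy norm and applying the triangle inequality bounds this by $k\bernkopfMelenkNorm{\pmb{\varphi} - \pmb{\psi}_h}_{L^2(\Omega)} + \bernkopfMelenkNorm{\nabla(u - v_h)}_{L^2(\Omega)} + k\bernkopfMelenkNorm{u - v_h}_{L^2(\Omega)} + \bernkopfMelenkNorm{\nabla\cdot(\pmb{\varphi} - \pmb{\psi}_h)}_{L^2(\Omega)} + k^{1/2}\bernkopfMelenkNorm{(\pmb{\varphi} - \pmb{\psi}_h)\cdot\pmb{n} + (u - v_h)}_{L^2(\partial\Omega)}$. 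The only summand not already present in the asserted right-hand side is the boundary contribution of $u - v_h$, which I would remove via the multiplicative trace inequality (Proposition~\ref{bernkopf_melenk_theorem:multiplicative_trace_inquality} with $s=1$) and Young's inequality, giving $k^{1/2}\bernkopfMelenkNorm{u - v_h}_{L^2(\partial\Omega)} \lesssim k\bernkopfMelenkNorm{u - v_h}_{L^2(\Omega)} + \bernkopfMelenkNorm{\nabla(u - v_h)}_{L^2(\Omega)}$. This identifies the first factor with the parenthesized expression in the theorem.

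The crux is the second factor, where the gain $h/p$ must be produced; the goal is $\bernkopfMelenkNorm{(\pmb{\psi} - \pmb{\psi}_h^\ast, v - v_h^\ast)}_b \lesssim (h/p)\bernkopfMelenkNorm{e_u}_{L^2(\Omega)}$ for a suitable choice of $(\pmb{\psi}_h^\ast, v_h^\ast)$, which then combines with the display above to cancel one power of $\bernkopfMelenkNorm{e_u}_{L^2(\Omega)}$ and close the estimate. I would split the dual solution via Lemma~\ref{bernkopf_melenk_lemma:duality_argument} as $\pmb{\psi} = \pmb{\psi}_A + \pmb{\psi}_{H^2}$, $v = v_A + v_{H^2}$ and approximate each piece with the operator of Section~\ref{bernkopf_melenk_section:approximation_propertie_of_raviart_thomas} (and its scalar $S_p$-analogue for $v$): the analytic parts $\pmb{\psi}_A,v_A$ by Corollary~\ref{bernkopf_melenk_corollary:global_RT_interpolation_operator_analytic}, observing that~\eqref{bernkopf_melenk_eq:decomposition_lemma_1}--\eqref{bernkopf_melenk_eq:decomposition_lemma_3} make the analyticity constant proportional to $\bernkopfMelenkNorm{e_u}_{L^2(\Omega)}$; the $H^2$-parts $\pmb{\psi}_{H^2},v_{H^2}$ by Corollary~\ref{bernkopf_melenk_corollary:global_RT_interpolation_operator} at $s = 2$, using~\eqref{bernkopf_melenk_eq:decomposition_lemma_4}--\eqref{bernkopf_melenk_eq:decomposition_lemma_5} to bound their $H^2$-norms by $\bernkopfMelenkNorm{e_u}_{L^2(\Omega)}$. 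Expanding $\bernkopfMelenkNorm{\cdot}_b$ into its bulk terms ($k\bernkopfMelenkNorm{\pmb{\psi} - \pmb{\psi}_h^\ast}_{L^2}$, $\bernkopfMelenkNorm{\nabla\cdot(\pmb{\psi} - \pmb{\psi}_h^\ast)}_{L^2}$, $k\bernkopfMelenkNorm{v - v_h^\ast}_{L^2}$, $\bernkopfMelenkNorm{\nabla(v - v_h^\ast)}_{L^2}$) plus a boundary term, I expect each summand to be $\lesssim (h/p)\bernkopfMelenkNorm{e_u}_{L^2(\Omega)}$: for the $H^2$-parts Corollary~\ref{bernkopf_melenk_corollary:global_RT_interpolation_operator} supplies the factor $(h/p)^1$ to the $H^1$-type norms and $(h/p)^2$ to the $L^2$-type norms, so that the prefactor $k$ together with $kh/p \le c_1$ again collapses to $h/p$; the boundary terms are again treated by the multiplicative trace inequality as in the primal estimate.

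\textbf{Main obstacle.} The delicate point is the analytic parts: Corollary~\ref{bernkopf_melenk_corollary:global_RT_interpolation_operator_analytic} produces exponentially small but $k$-polluted quantities such as $k(kh/(\sigma p))^p$, and these must be converted into a clean multiple of $h/p$. This is precisely where the second scale-resolution hypothesis $p \ge c_2(\log k + 1)$ is used: for $c_1$ small and $c_2$ large it forces the geometric factors $(h/(h+p))^p$ and $(kh/(\sigma p))^p$ to absorb the algebraic powers of $k$ and to be bounded by $C\,h/p$ uniformly in $k$, $h$, $p$. Carrying out this absorption carefully, and matching it against the $(h/p)^{s-1}$ rate of the $H^2$-parts so that both contributions carry the same single power of $h/p$, is the technical heart of the argument; the surrounding steps are routine assembly through the triangle and Cauchy--Schwarz inequalities.
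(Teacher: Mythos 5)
Your proposal is correct and follows essentially the same route as the paper: the refined duality argument of Lemma~\ref{bernkopf_melenk_lemma:duality_argument} with $w = e^u$, Galerkin orthogonality, the analytic/$H^2$ splitting of the dual solution approximated via Corollaries~\ref{bernkopf_melenk_corollary:global_RT_interpolation_operator} and~\ref{bernkopf_melenk_corollary:global_RT_interpolation_operator_analytic} (and the $S_p$-analogue for $v$) under the scale resolution condition~\eqref{bernkopf_melenk_eq:scale_resolution_condition}, and finally the energy-projection quasi-optimality plus the multiplicative trace inequality to absorb $k^{1/2}\bernkopfMelenkNorm{u-v_h}_{L^2(\partial\Omega)}$. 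The only difference is organizational: you apply Cauchy--Schwarz abstractly in the $b$-inner product and invoke quasi-optimality up front, whereas the paper applies Cauchy--Schwarz term by term and defers the quasi-optimality step to the end --- these are equivalent.
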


\begin{proof}
	\smartqed
  Let $e^u = u - u_h$ and $\pmb{e^\varphi} = \pmb{\varphi} - \pmb{\varphi}_h$ denote the errors of the two components.
  We apply the duality argument from Lemma~\ref{bernkopf_melenk_lemma:duality_argument} with $w = e^u$ and also apply the corresponding splitting:
  \begin{equation*}
    \bernkopfMelenkNorm{e^u}_{L^2(\Omega)}^2 =
    b( (\pmb{e^\varphi}, e^u), (\pmb{\psi}, v) ) =
    b( (\pmb{e^\varphi}, e^u), (\pmb{\psi}_A, v_A) ) + b( (\pmb{e^\varphi}, e^u), (\pmb{\psi}_{H^2}, v_{H^2}) ).
  \end{equation*}
  Exploiting the Galerkin orthogonality we have
  \begin{equation*}
    \bernkopfMelenkNorm{e^u}_{L^2(\Omega)}^2 =
    b( (\pmb{e^\varphi}, e^u), (\pmb{\psi}_A - \tilde{\pmb{\psi}}_A, v_A - \tilde{v}_A) ) + b( (\pmb{e^\varphi}, e^u), ( \pmb{\psi}_{H^2} - \tilde{\pmb{\psi}}_{H^2}, v_{H^2} - \tilde{v}_{H^2}) ).
  \end{equation*}
  for any $(\tilde{\pmb{\psi}}_A, \tilde{v}_A), (\tilde{\pmb{\psi}}_{H^2}, \tilde{v}_{H^2}) \in \pmb{V}_h \times W_h$.
  Using Cauchy-Schwarz we arrive at
  \begin{align}\label{bernkopf_melenk_eq:first_cs_estimate_in_a_priori_estimate}
		\begin{split}
    & \bernkopfMelenkNorm{e^u}_{L^2(\Omega)}^2 \lesssim
		\left[ \bernkopfMelenkNorm{i k \pmb{e^\varphi} + \nabla e^u}_{L^2(\Omega)} + \bernkopfMelenkNorm{i k e^u + \nabla \cdot \pmb{e^\varphi}}_{L^2(\Omega)} + k^{1/2} \bernkopfMelenkNorm{ \pmb{e^\varphi} \cdot \pmb{n} + e^u}_{L^2(\partial\Omega)} \right] \cdot \\
		& \Big( \bernkopfMelenkNorm{\nabla \cdot ( \pmb{\psi}_A - \tilde{\pmb{\psi}}_A )}_{L^2(\Omega)} +
			k \bernkopfMelenkNorm{ \pmb{\psi}_A - \tilde{\pmb{\psi}}_A }_{L^2(\Omega)} +
			k^{1/2} \bernkopfMelenkNorm{ ( \pmb{\psi}_A - \tilde{\pmb{\psi}}_A ) \cdot \pmb{n} }_{L^2(\partial \Omega)} + \\
		& \phantom{\Big(} \bernkopfMelenkNorm{\nabla \cdot ( \pmb{\psi}_{H^2} - \tilde{\pmb{\psi}}_{H^2} )}_{L^2(\Omega)} +
			k \bernkopfMelenkNorm{\pmb{\psi}_{H^2} - \tilde{\pmb{\psi}}_{H^2}}_{L^2(\Omega)} +
			k^{1/2} \bernkopfMelenkNorm{ ( \pmb{\psi}_{H^2} - \tilde{\pmb{\psi}}_{H^2} ) \cdot \pmb{n} }_{L^2(\partial \Omega)} + \\
		& \phantom{\Big(} \bernkopfMelenkNorm{\nabla ( v_A - \tilde{v}_A )}_{L^2(\Omega)} +
			k \bernkopfMelenkNorm{ v_A - \tilde{v}_A }_{L^2(\Omega)} +
			k^{1/2} \bernkopfMelenkNorm{ v_A - \tilde{v}_A }_{L^2(\partial \Omega)} + \\
		& \phantom{\Big(} \bernkopfMelenkNorm{\nabla ( v_{H^2} - \tilde{v}_{H^2} )}_{L^2(\Omega)} +
			k \bernkopfMelenkNorm{v_{H^2} - \tilde{v}_{H^2}}_{L^2(\Omega)} +
			k^{1/2} \bernkopfMelenkNorm{v_{H^2} - \tilde{v}_{H^2}}_{L^2(\partial \Omega)} \Big).
		\end{split}
  \end{align}
  We are going to exploit the approximation properties in the corresponding norms and spaces.

	\noindent
  \textbf{Approximation of $v_A$ and $v_{H^2}$}:
  For the approximation we may apply~\cite[Lemma~4.10]{chen-qiu17}, which is essentially the procedure of~\cite[Thm.~5.5]{melenk-sauter10} together with a multiplicative trace inequality.
  Using the estimates~\eqref{bernkopf_melenk_eq:decomposition_lemma_2},~\eqref{bernkopf_melenk_eq:decomposition_lemma_3}, and~\eqref{bernkopf_melenk_eq:decomposition_lemma_5} in Lemma~\ref{bernkopf_melenk_lemma:duality_argument} as well as~\cite[Thm.~B.4]{melenk-sauter10} to find appropriate approximations $\tilde{v}_{H^2}$ and $\tilde{v}_A$ we have
  \begin{align*}
    & \bernkopfMelenkNorm{\nabla ( v_A - \tilde{v}_A )}_{L^2(\Omega)} + k \bernkopfMelenkNorm{ v_A - \tilde{v}_A }_{L^2(\Omega)} + k^{1/2} \bernkopfMelenkNorm{ v_A - \tilde{v}_A }_{L^2(\partial \Omega)} \\
    & \qquad \lesssim \left[ \left(\frac{h}{h + p}\right)^p \left(1 + \frac{h k}{h + \sigma}\right) + k \left(\frac{k h}{\sigma p}\right)^p \left(\frac{1}{p} + \frac{k h}{\sigma p}\right) \right] \bernkopfMelenkNorm{e^u}_{L^2(\Omega)} \\
    & \qquad \lesssim \frac{h}{p} \bernkopfMelenkNorm{e^u}_{L^2(\Omega)}
  \end{align*}
  as well as
  \begin{align*}
    & \bernkopfMelenkNorm{\nabla ( v_{H^2} - \tilde{v}_{H^2} )}_{L^2(\Omega)} + k \bernkopfMelenkNorm{v_{H^2} - \tilde{v}_{H^2}}_{L^2(\Omega)} + k^{1/2} \bernkopfMelenkNorm{v_{H^2} - \tilde{v}_{H^2}}_{L^2(\partial \Omega)} \\
    & \qquad \lesssim \frac{1}{k}\left( \frac{k h}{p} + \left(\frac{k h}{p}\right)^2 \right) \bernkopfMelenkNorm{e^u}_{L^2(\Omega)}
    \lesssim \frac{h}{p} \bernkopfMelenkNorm{e^u}_{L^2(\Omega)},
  \end{align*}
  where the latter estimates are due to the boundedness of $\Omega$, $\sigma > 0$, and choosing $c_1$
  small and $c_2$ sufficiently large as well as elementary but tedious calculations.

	\noindent
  \textbf{Approximation of $\pmb{\psi}_A$}:
	To approximate $\pmb{\psi}_A$ we choose $\tilde{\pmb{\psi}}_{A} = \pmb{\Pi}^{\operatorname{div}, 2}_p \pmb{\psi}_A$ with $\pmb{\Pi}^{\operatorname{div}, 2}_p$ as in Corollary~\ref{bernkopf_melenk_corollary:global_RT_interpolation_operator_analytic} and apply the results therein.
	Furthermore we apply the estimates~\eqref{bernkopf_melenk_eq:decomposition_lemma_1} and~\eqref{bernkopf_melenk_eq:decomposition_lemma_3} of Lemma~\ref{bernkopf_melenk_lemma:duality_argument}.
  Proceeding as above together with a multiplicative trace inequality, again after tedious calculations, gives
  \begin{align*}
    & \bernkopfMelenkNorm{\nabla \cdot ( \pmb{\psi}_A - \tilde{\pmb{\psi}}_A )}_{L^2(\Omega)} +
    k \bernkopfMelenkNorm{ \pmb{\psi}_A - \tilde{\pmb{\psi}}_A }_{L^2(\Omega)} +
    k^{1/2} \bernkopfMelenkNorm{ ( \pmb{\psi}_A - \tilde{\pmb{\psi}}_A ) \cdot \pmb{n} }_{L^2(\partial \Omega)}
 \\ & \qquad
\lesssim \frac{h}{p} \bernkopfMelenkNorm{e^u}_{L^2(\Omega)}.
  \end{align*}

	\noindent
  \textbf{Approximation of $\pmb{\psi}_{H^2}$}:
  To approximate $\pmb{\psi}_{H^2}$ we choose $\tilde{\pmb{\psi}}_{H^2} = \pmb{\Pi}^{\operatorname{div}, 2}_p \pmb{\psi}_{H^2}$ with $\pmb{\Pi}^{\operatorname{div}, 2}_p$ as in Corollary~\ref{bernkopf_melenk_corollary:global_RT_interpolation_operator} and apply the results therein.
	We apply the estimate~\eqref{bernkopf_melenk_eq:decomposition_lemma_4} of Lemma~\ref{bernkopf_melenk_lemma:duality_argument}.
  Due to the multiplicative trace inequality we also have
  \begin{equation}
    \bernkopfMelenkNorm{ ( \pmb{\psi}_{H^2} - \tilde{\pmb{\psi}}_{H^2} ) \cdot \pmb{n} }_{L^2(\partial \Omega)} \leq \left(\frac{h}{p}\right)^{3/2} \bernkopfMelenkNorm{\pmb{\psi}_{H^2}}_{H^2(\Omega)}.
  \end{equation}
  Therefore we arrive at
  \begin{align*}
     & \bernkopfMelenkNorm{\nabla \cdot ( \pmb{\psi}_{H^2} - \tilde{\pmb{\psi}}_{H^2} )}_{L^2(\Omega)} +
     k \bernkopfMelenkNorm{\pmb{\psi}_{H^2} - \tilde{\pmb{\psi}}_{H^2} }_{L^2(\Omega)} +
     k^{1/2}\bernkopfMelenkNorm{ ( \pmb{\psi}_{H^2} - \tilde{\pmb{\psi}}_{H^2} ) \cdot \pmb{n} }_{L^2(\partial \Omega)} \\
     & \qquad \lesssim \frac{h}{p} \bernkopfMelenkNorm{\pmb{\psi}_{H^2}}_{H^2(\Omega)} \lesssim \frac{h}{p} \bernkopfMelenkNorm{e^u}_{L^2(\Omega)},
  \end{align*}
  where we used the estimate~\eqref{bernkopf_melenk_eq:decomposition_lemma_4} of Lemma~\ref{bernkopf_melenk_lemma:duality_argument}.
  Putting it all together we have
  \begin{align*}
    \bernkopfMelenkNorm{e^u}_{L^2(\Omega)}
    & \lesssim \frac{h}{p} (\bernkopfMelenkNorm{i k \pmb{e^\varphi} + \nabla e^u}_{L^2(\Omega)} + \bernkopfMelenkNorm{i k e^u + \nabla \cdot \pmb{e^\varphi}}_{L^2(\Omega)} + k^{1/2} \bernkopfMelenkNorm{ \pmb{e^\varphi} \cdot \pmb{n} + e^u}_{L^2(\partial\Omega)}) \\
    &\lesssim \frac{h}{p} \sqrt{b((\pmb{e^\varphi}, e^u), (\pmb{e^\varphi}, e^u))}.
  \end{align*}
  Applying again the Galerkin orthogonality and using the multiplicative trace inequality to absorb the term
  $k^{1/2} \bernkopfMelenkNorm{u - v_h}_{L^2(\partial \Omega)}$ into the $L^2$ norms of the volume yields the result.
  \qed
\end{proof}

We conclude this Section with a simple consequence of standard regularity theory and approximation properties of the employed finite element spaces in higher order Sobolev norms.

\begin{corollary}\label{bernkopf_melenk_corollary:higher_regularity_convergence_rate}
	For $s \geq 0$, $f \in H^s(\Omega)$ and $g \in H^{s+1/2}(\partial \Omega)$ we have
	$u \in H^{s+2}(\Omega)$,
	$u \in H^{s+3/2}(\partial \Omega)$,
	$\partial_n u \in H^{s+1/2}(\partial \Omega)$,
	$\pmb{\varphi} \in \pmb{H}^{s+1}(\Omega)$,
	$\nabla \cdot \pmb{\varphi} \in \pmb{H}^{s}(\Omega)$ and
	$\pmb{\varphi} \cdot \pmb{n} \in \pmb{H}^{s+1/2}(\partial \Omega)$.
	Furthermore there exist constants $c_1$, $c_2 > 0$ that are independent of $h$, $p$, and $k$ such that the conditions
  \begin{equation}
    \frac{kh}{p} \leq c_1 \qquad \text{and} \qquad p \geq c_2(\log k + 1)
  \end{equation}
  imply that the solution $(\pmb{\varphi}_h, u_h)$ satisfies
	\begin{equation*}
		\bernkopfMelenkNorm{u-u_h}_{L^2(\Omega)}
		\lesssim
		\left(\frac{h}{p}\right)^{s+1} (\bernkopfMelenkNorm{f}_{H^s(\Omega)} + \bernkopfMelenkNorm{g}_{H^{s+1/2}(\Omega)}),
	\end{equation*}
	for $ p \geq s $ with a wavenumber independent constant.
\end{corollary}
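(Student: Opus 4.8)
The proof rests on Theorem~\ref{bernkopf_melenk_theorem:a_priori_estimate}, which bounds $\|u-u_h\|_{L^2(\Omega)}$ by $\frac{h}{p}$ times a residual expression in $(\pmb\varphi-\pmb\psi_h,\,u-v_h)$ for \emph{arbitrary} $(\pmb\psi_h,v_h)\in\pmb V_h\times W_h$. The task is therefore to exhibit discrete approximants making that bracket $\lesssim (h/p)^{s}(\|f\|_{H^s(\Omega)}+\|g\|_{H^{s+1/2}(\partial\Omega)})$ with a $k$-independent constant; the prefactor $h/p$ then supplies the claimed rate $(h/p)^{s+1}$.

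\textbf{Regularity.} Since $\partial\Omega$ is analytic and $f\in H^s(\Omega)$, $g\in H^{s+1/2}(\partial\Omega)$, the elliptic shift theorem for the Robin problem (treating $-k^2u$ as a lower order term) gives $u\in H^{s+2}(\Omega)$, whence $u|_{\partial\Omega}\in H^{s+3/2}(\partial\Omega)$ and $\partial_n u\in H^{s+1/2}(\partial\Omega)$ by the trace theorem. From $\pmb\varphi=ik^{-1}\nabla u$ we obtain $\pmb\varphi\in\pmb H^{s+1}(\Omega)$ and $\pmb\varphi\cdot\pmb n=ik^{-1}\partial_n u\in H^{s+1/2}(\partial\Omega)$, while the identity $\nabla\cdot\pmb\varphi=ik^{-1}\Delta u=-iku-ik^{-1}f$ (from the PDE) yields $\nabla\cdot\pmb\varphi\in H^s(\Omega)$.

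\textbf{Decomposition and approximation.} A direct bound of the bracket by $\|u\|_{H^{s+2}(\Omega)}$ is \emph{not} $k$-uniform, since for oscillatory data $\|u\|_{H^{s+2}(\Omega)}$ grows like $k^{s+1}(\|f\|+\|g\|)$, which cancels the gain from $(h/p)^{s+1}$. I would instead invoke Proposition~\ref{bernkopf_melenk_proposition:higher_regularity_decomposition} to split $u=u_A+u_{H^{s+2}}$, and correspondingly $\pmb\varphi=\pmb\varphi_A+\pmb\varphi_H$ with $\pmb\varphi_A=ik^{-1}\nabla u_A$ and $\pmb\varphi_H=ik^{-1}\nabla u_{H^{s+2}}$, and take $v_h$, $\pmb\psi_h$ as sums of separate approximants of the two parts. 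For the finite-regularity part I would use $\pmb\psi_h^H=\pmb\Pi^{\operatorname{div},s+1}_p\pmb\varphi_H$ from Corollary~\ref{bernkopf_melenk_corollary:global_RT_interpolation_operator} (valid once $s+1>d/2$) and a standard $hp$-approximant $v_h^H$ of $u_{H^{s+2}}$; the $k$-explicit bounds of Remark~\ref{bernkopf_melenk_remark:interpolation_higher_regulartiy_decomposition}, namely $\|u_{H^{s+2}}\|_{H^{s+2}}\lesssim\|f\|_{H^s}+\|g\|_{H^{s+1/2}}$ together with the weighted lower-order norms, then give each term $\lesssim(h/p)^{s}(\|f\|_{H^s}+\|g\|_{H^{s+1/2}})$, the binding contribution being $\|\nabla\cdot(\pmb\varphi_H-\pmb\psi_h^H)\|_{L^2(\Omega)}\lesssim(h/p)^s\|\pmb\varphi_H\|_{H^{s+1}}$; the boundary term is handled by the multiplicative trace inequality (Proposition~\ref{bernkopf_melenk_theorem:multiplicative_trace_inquality}), and every factor of $k$ is absorbed using $kh/p\le c_1$. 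For the analytic part I would apply Corollary~\ref{bernkopf_melenk_corollary:global_RT_interpolation_operator_analytic} to $\pmb\varphi_A$ (whose derivative bounds follow from $\pmb\varphi_A=ik^{-1}\nabla u_A$ and the analytic estimate in Proposition~\ref{bernkopf_melenk_proposition:higher_regularity_decomposition}) and the analogous scalar analytic estimate of~\cite[Thm.~5.5]{melenk-sauter10} to $u_A$.

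\textbf{Main obstacle.} The crux is $k$-uniformity, and this is exactly where the scale resolution condition~\eqref{bernkopf_melenk_eq:scale_resolution_condition} is indispensable. The analytic pieces carry the oscillatory, $k$-growing behaviour, and the bounds of Corollary~\ref{bernkopf_melenk_corollary:global_RT_interpolation_operator_analytic} contain factors such as $k(\tfrac{kh}{\sigma p})^{p}$; as in the proof of Theorem~\ref{bernkopf_melenk_theorem:a_priori_estimate}, I would show that under $kh/p\le c_1$ (with $c_1/\sigma<1$) and $p\ge c_2(\log k+1)$ the exponential factor dominates the algebraic powers of $k$, so the analytic contribution is at most $\lesssim(h/p)^{s}$ (in fact super-algebraically small for $p\ge s$) times $\|f\|_{L^2(\Omega)}+\|g\|_{H^{1/2}(\partial\Omega)}$. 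Collecting the analytic and finite-regularity estimates and multiplying by the prefactor $h/p$ yields the assertion; what remains is the routine but lengthy bookkeeping of $k$-powers already signalled by the ``tedious calculations'' in Theorem~\ref{bernkopf_melenk_theorem:a_priori_estimate}.
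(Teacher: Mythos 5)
For $s>0$ your proposal is essentially the paper's own proof: apply Theorem~\ref{bernkopf_melenk_theorem:a_priori_estimate}, split $u=u_A+u_{H^{s+2}}$ via Proposition~\ref{bernkopf_melenk_proposition:higher_regularity_decomposition} and transfer the splitting to $\pmb{\varphi}=ik^{-1}\nabla u$, approximate the finite-regularity flux part with the $\pmb{H}(\operatorname{div},\Omega)$-conforming operator of Corollary~\ref{bernkopf_melenk_corollary:global_RT_interpolation_operator} and the analytic parts with Corollary~\ref{bernkopf_melenk_corollary:global_RT_interpolation_operator_analytic}, accept the exponent $s$ (not $s+1$) for the flux since $\pmb{\varphi}\in\pmb{H}^{s+1}(\Omega)$ only, absorb powers of $k$ using the scale resolution condition~\eqref{bernkopf_melenk_eq:scale_resolution_condition}, and multiply by the prefactor $h/p$. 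The regularity assertions and the identification of the binding term $\bernkopfMelenkNorm{\nabla\cdot(\pmb{\varphi}-\pmb{\psi}_h)}_{L^2(\Omega)}$ also match the paper.

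There is, however, a genuine gap: the statement covers all $s\geq 0$, and your construction does not reach $s=0$. You yourself record the constraint ``valid once $s+1>d/2$'' for applying Corollary~\ref{bernkopf_melenk_corollary:global_RT_interpolation_operator} to $\pmb{\varphi}_H\in\pmb{H}^{s+1}(\Omega)$, but at $s=0$ this fails in both $d=2$ and $d=3$, and you never resolve it (the same borderline issue occurs for $0<s\leq 1/2$ in $d=3$, a point the paper itself glosses over). The paper closes the case $s=0$ by a different, approximation-free argument: choose $v_h=0$ and $\pmb{\psi}_h=0$ in Theorem~\ref{bernkopf_melenk_theorem:a_priori_estimate}. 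The right-hand side bracket then consists of $\bernkopfMelenkNorm{\nabla u}_{L^2(\Omega)}+k\bernkopfMelenkNorm{u}_{L^2(\Omega)}+\bernkopfMelenkNorm{\nabla\cdot\pmb{\varphi}}_{L^2(\Omega)}+k\bernkopfMelenkNorm{\pmb{\varphi}}_{L^2(\Omega)}+k^{1/2}\bernkopfMelenkNorm{\pmb{\varphi}\cdot\pmb{n}}_{L^2(\partial\Omega)}$, which is bounded $k$-uniformly by $\bernkopfMelenkNorm{f}_{L^2(\Omega)}+\bernkopfMelenkNorm{g}_{H^{1/2}(\partial\Omega)}$ using the stability estimate~\eqref{bernkopf_melenk_eq:remark:analytic_boundary_gives_stability_estimate} (equivalently Proposition~\ref{bernkopf_melenk_proposition:higher_regularity_decomposition} with $s=0$), the identities $k\pmb{\varphi}=i\nabla u$, $\nabla\cdot\pmb{\varphi}=-iku-ik^{-1}f$, $\pmb{\varphi}\cdot\pmb{n}=ik^{-1}(g+iku)$, and the multiplicative trace inequality of Proposition~\ref{bernkopf_melenk_theorem:multiplicative_trace_inquality}; this yields the claimed rate $h/p$. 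Your proof needs this (or an equivalent) supplement to cover the full range $s\geq 0$.
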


\begin{proof}
	\smartqed
	The first assertion follows immediately from standard regularity theory.
	Consider the case $s > 0$.
	Theorem~\ref{bernkopf_melenk_theorem:a_priori_estimate}
	together with a multiplicative trace inequality, which is applicable due
        to the already derived regularity of $\pmb{\varphi}$, gives
	\begin{align*}
		\bernkopfMelenkNorm{u-u_h}_{L^2(\Omega)}
		\lesssim
		\frac{h}{p}
		\Big(  &\bernkopfMelenkNorm{  u - v_h }_{H^1(\Omega)} +
						k \bernkopfMelenkNorm{ u - v_h }_{L^2(\Omega)} + \\
					 & \bernkopfMelenkNorm{  \pmb{\varphi} - \pmb{\psi}_h }_{H^1(\mathcal{T}_h)} +
						k \bernkopfMelenkNorm{ \pmb{\varphi} - \pmb{\psi}_h }_{L^2(\Omega)}
		\Big).
	\end{align*}
	Applying the higher order splitting of Theorem~\ref{bernkopf_melenk_proposition:higher_regularity_decomposition} and using the fact that $\pmb{\varphi} = i k^{-1} \nabla u$, one can easily estimate,
	as in the proof of Theorem~\ref{bernkopf_melenk_theorem:a_priori_estimate} together with the Corollaries~\ref{bernkopf_melenk_corollary:global_RT_interpolation_operator} and~\ref{bernkopf_melenk_corollary:global_RT_interpolation_operator_analytic},
	\begin{equation*}
		\bernkopfMelenkNorm{  \pmb{\varphi} - \pmb{\psi}_h }_{H^1(\Omega) } + k \bernkopfMelenkNorm{ \pmb{\varphi} - \pmb{\psi}_h }_{L^2(\Omega)}
		\lesssim  \left( \frac{h}{p} \right)^{s} ( \bernkopfMelenkNorm{f}_{H^s(\Omega)} + \bernkopfMelenkNorm{g}_{H^{s+1/2}(\Omega)} ).
	\end{equation*}
	Note the exponent $s$, since $\pmb{\varphi}$ is only in $\pmb{H}^{s+1}(\Omega)$.
	Furthermore, again as in the proof of Theorem~\ref{bernkopf_melenk_theorem:a_priori_estimate}, see also~\cite[Thm.~4.8]{melenk-parsania-sauter13}, we have
	\begin{equation*}
		\bernkopfMelenkNorm{  u - v_h }_{H^1(\Omega)} + k \bernkopfMelenkNorm{ u - v_h }_{L^2(\Omega)}
		\lesssim \left( \frac{h}{p} \right)^{s+1} ( \bernkopfMelenkNorm{f}_{H^s(\Omega)} + \bernkopfMelenkNorm{g}_{H^{s+1/2}(\Omega)} ),
	\end{equation*}
	now with the exponent $s+1$ since $u \in H^{s+2}(\Omega)$, which yields the result for $s>0$.
	In the case $s = 0$ one simply sets $v_h = 0$ as well as $\pmb{\psi}_h = 0$ and uses the wavenumber-explicit estimates of Theorem~\ref{bernkopf_melenk_proposition:higher_regularity_decomposition}.
	\qed
\end{proof}

\begin{remark}
	\smartqed
	Note that although we assume $f \in H^s(\Omega)$ and $g \in H^{s+1/2}(\partial \Omega)$ in Corollary~\ref{bernkopf_melenk_corollary:higher_regularity_convergence_rate},
	we only obtained a convergence rate $s+1$.
	This seems suboptimal when compared with classical FEM where,
	given sufficient regularity of the data and the geometry,
	one can expect a rate of $s+2$ for the convergence in the $L^2(\Omega)$-norm.
	Especially for $f \in L^2(\Omega)$ and $g \in H^{1/2}(\partial \Omega)$ one can only expect $h/p$
	for the FOSLS method compared to $h^2/p^2$ for the FEM. The proof of Corollary~\ref{bernkopf_melenk_corollary:higher_regularity_convergence_rate} is in that sense sharp since the leading error term in
	the {\sl a priori} estimate is
	\begin{equation*}
		\bernkopfMelenkNorm{ \nabla \cdot ( \pmb{\varphi} - \pmb{\psi}_h )}_{L^2(\Omega) } =
		\bernkopfMelenkNorm{ ik^{-1}f + iku - \nabla \cdot \pmb{\psi}_h }_{L^2(\Omega) },
	\end{equation*}
	where we used the fact $\pmb{\varphi} = i k^{-1} \nabla u$.
	The essential part is therefore to approximate an $f$ that is just in $L^2(\Omega)$ and therefore no further powers of $h$ can be gained.
	Assuming more regularity on $f$ would resolve this problem, however, the boundary data would restrict a further lifting of $\pmb{\varphi}$ in classical Sobolev spaces, but not in $H(\operatorname{div}, \Omega)$ spaces.
	This in turn would make it necessary to directly estimate $\bernkopfMelenkNorm{ \nabla \cdot ( \pmb{\varphi} - \pmb{\psi}_h )}_{L^2(\Omega) }$ instead of generously bounding it by $\bernkopfMelenkNorm{ \pmb{\varphi} - \pmb{\psi}_h }_{H^1(\mathcal{T}_h) }$.
	Last but not least there is the boundary term
	\begin{equation*}
		 \bernkopfMelenkNorm{ ( \pmb{\varphi} - \pmb{\psi}_h ) \cdot \pmb{n} }_{L^2(\partial \Omega)} =
		 \bernkopfMelenkNorm{ ik^{-1}g - u - \pmb{\psi}_h \cdot \pmb{n} }_{L^2(\partial \Omega)}.
	\end{equation*}
	Again if $g$ is only $H^{1/2}(\partial \Omega)$ one can only expect $\sqrt{h/p}$, but favorable in terms of $k$.
	\qed
\end{remark}

\section{Numerical examples}\label{bernkopf_melenk_section:numerical_examples}
All our calculations are performed with the $hp$-FEM code NETGEN/NGSOLVE
by J.~Sch\"oberl,~\cite{schoeberlNGSOLVE,schoeberl97}. We plot the error against $N_\lambda$,
the number of degrees of freedom per wavelength,
\begin{equation*}
  N_\lambda = \frac{2 \pi \sqrt[d]{\mathrm{DOF}}}{k \sqrt[d]{\mathrm{|\Omega|}}},
\end{equation*}
where the wavelength $\lambda$ and the wavenumber $k$ are
related via $k = 2 \pi / \lambda$ and $\mathrm{DOF}$ denotes the size of the linear system to be solved.
We compare the results of the classical FEM with the FOSLS method, measured in the relative
$L^2(\Omega)$ error. For the classical FEM we use the standard space $S_p(\mathcal{T}_h)$. For the FOSLS method we employ the pairing $\pmb{V}_h \times W_h = \pmb{\mathrm{BDM}}_p(\mathcal{T}_h) \times S_p(\mathcal{T}_h)$.

\begin{example}\label{bernkopf_melenk_example:smooth_solution}
  Let $\Omega$ be the unit circle in $\mathbb{R}^2$ and consider the problem
  \begin{equation*}
  	\begin{alignedat}{2}
  		- \Delta u - k^2 u &= 0 \qquad   &&\text{in } \Omega,\\
  		\partial_n u - i k u &= g        &&\text{on } \partial \Omega.
  	\end{alignedat}
  \end{equation*}
  where the data $g$ is such that the exact solution is given by $u(x,y) = \mathrm{e}^{i(k_1 x + k_2 y)}$ with $k_1 = - k_2 = \frac{1}{\sqrt{2}} k$.
	For the numerical studies, this problem will be solved using $h$-FEM and $h$-FOSLS with polynomial degrees $p = 1,2,3,4$.
	The results are visualized in Figure~\ref{bernkopf_melenk_figure:h_fem_versus_h_fosls}.
	For both methods we observe the expected convergence $O(h^{p+1})$ in the relative $L^2(\Omega)$ error.
	Note that for both methods higher order versions are less prone to the pollution effect.
	At the same number of degrees of freedom per wavelength we also observe that the classical FEM is superior to FOSLS, when measured in achieved accuracy in $L^2(\Omega)$.
	This is not surprising since, for the same mesh and polynomial degree $p$, the number of degrees of freedom
of the FOSLS is roughly three times as large as for the classical FEM.
	Note, however, that we do not consider any solver aspects of the employed methods, where FOSLS might have advantages over the classical FEM since its system matrix is positive definite.
\end{example}

\begin{figure}[h]
	\centering
	\includegraphics[width=\textwidth]{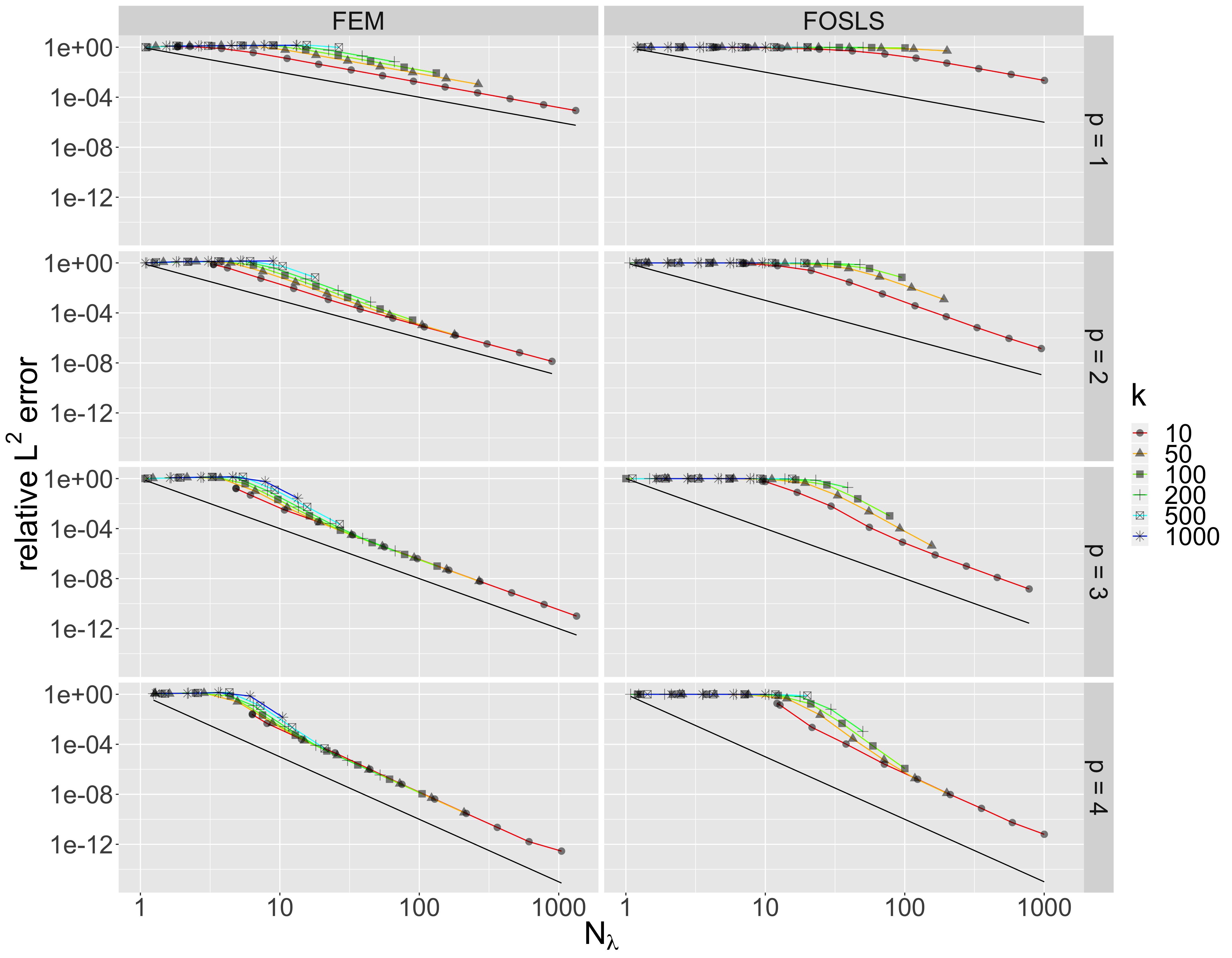}
	\caption{
		Comparison between the $h$-FEM (left) and $h$-FOSLS (right) for $p = 1$, $2$, $3$, $4$ as described in Example~\ref{bernkopf_melenk_example:smooth_solution}.
		The reference line in black corresponds to $h^{p+1}$.
	}
	\label{bernkopf_melenk_figure:h_fem_versus_h_fosls}
\end{figure}

\begin{example}\label{bernkopf_melenk_example:singular_solution_corner_domain}
	For $\pi < \omega< 2 \pi$ let $\Omega = \{ (r \cos \varphi, r \sin \varphi) \colon r \in (0,1), \varphi \in (0, \omega) \} \subset\mathbb{R}^2$ and consider
  \begin{equation*}
  	\begin{alignedat}{2}
  		- \Delta u - k^2 u &= 0 \qquad   &&\text{in } \Omega,\\
  		\partial_n u - i k u &= g        &&\text{on } \partial \Omega.
  	\end{alignedat}
  \end{equation*}
  The data $g$ is such that the exact solution is given by $u(x,y) = J_{\alpha}(kr) \cos(\alpha \varphi)$, with $\alpha = 3 \pi / 2$.
	Standard regularity theory gives $u \in H^{1 + \alpha - \varepsilon}(\Omega)$ for every $\varepsilon > 0$.
	In the numerical experiments we keep $kh = 5$ and perform a $p$-FEM and a $p$-FOSLS method up to $p = 46$ and $p = 29$, respectively.
	The results are visualized in Figure~\ref{bernkopf_melenk_figure:p_fem_versus_p_fosls}. We observe that the FEM has
  significantly smaller errors than the FOSLS. For a discussion of the expected $L^2(\Omega)$-convergence rates
  of the $p$-FEM, we refer the reader to~\cite[Remark after Thm.~3 and Section~3]{jensen-suri92}.
\end{example}

\begin{figure}[h]
	\centering
	\includegraphics[width=\textwidth]{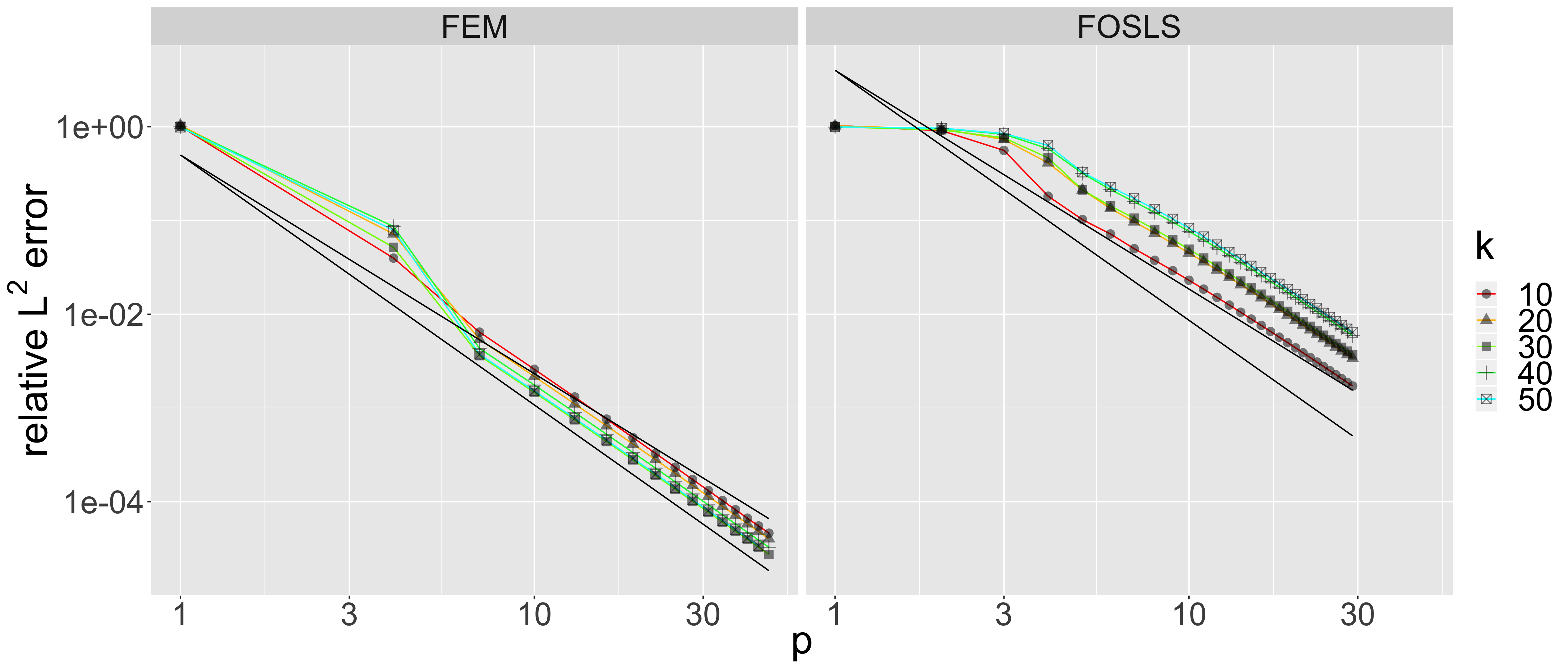}
	\caption{
		Comparison between the $p$-FEM (left) and $p$-FOSLS (right) for $kh = 5$ as described in Example~\ref{bernkopf_melenk_example:singular_solution_corner_domain}.
		We include the reference lines $p^{-4 \cdot 2/3} = p^{-8/3}$ and $p^{-(2 \cdot 2/3 + 1)} = p^{-7/3}$.
	}
	\label{bernkopf_melenk_figure:p_fem_versus_p_fosls}
\end{figure}
The next example focuses on the Helmholtz equation with right-hand side $f$ with finite Sobolev regularity.
\begin{example}\label{bernkopf_melenk_example:singular_solution_singular_f}
	Let $\Omega = (-1, 1) \subset \mathbb{R}$ and $f = -\chi_{(-1,0]} + \chi_{(0,1)}$, where $\chi_A$ denotes the indicator function on $A \subset \mathbb{R}$.
	The function $f$ is in $H^{1/2 - \varepsilon}(\Omega)$ for every $\varepsilon > 0$.
	We consider uniform meshes $\mathcal{T}_h$ on $\Omega$ such that the break point zero is \emph{not} a node, as otherwise the piecewise smooth solution could be approximated very well.  We study
  \begin{equation*}
  	\begin{alignedat}{2}
  		- u^{\prime \prime} - k^2 u &= f \qquad   &&\text{in } \Omega,\\
  		\partial_n u - i k u &= g        &&\text{on } \partial \Omega.
  	\end{alignedat}
  \end{equation*}
  where the data $g$ is such that the exact solution is given by
	\[ u(x) =
		\begin{cases}
      \cos(kx) + \frac{1}{k^2}   & x \leq 0 \\
      (1 + \frac{2}{k^2}) \cos(kx) - \frac{1}{k^2} & x > 0
   	\end{cases}
	\]
	Standard regularity theory gives $u \in H^{2.5 - \varepsilon}(\Omega)$ for every $\varepsilon > 0$.
	For the $h$-FEM we expect $O(h^{\min(2 + 0.5, p+1)})$.
	In fact for $p>1$ one can show (cf.~\cite[Cor.~{4.6}]{esterhazy-melenk14}) that
  			$k \bernkopfMelenkNorm{u-u_h^{\mathrm{FEM}}}_{L^2(\Omega)} \lesssim h^{2.5}$ and, by inspection,
  			$\bernkopfMelenkNorm{u}_{L^2(\Omega)} = O(1)$ (uniformly in $k$).
	It is therefore expedient to plot $k^{3.5} \bernkopfMelenkNorm{u-u_h^{\mathrm{FEM}}}_{L^2(\Omega)}/\|u\|_{L^2(\Omega)}$
        versus $N_\lambda \sim (kh)$.
	For the $h$-FOSLS Corollary~\ref{bernkopf_melenk_corollary:higher_regularity_convergence_rate} predicts only
        $O(h^{\min(1+0.5, p+1)})$.
	The numerical results show, however, for both methods convergence $O(h^{\min(2.5, p+1)})$.
	The results are visualized in Figure~\ref{bernkopf_melenk_figure:h_fem_versus_h_fosls_singular_f}.
\end{example}

\begin{figure}[h]
	\centering
	\includegraphics[width=\textwidth]{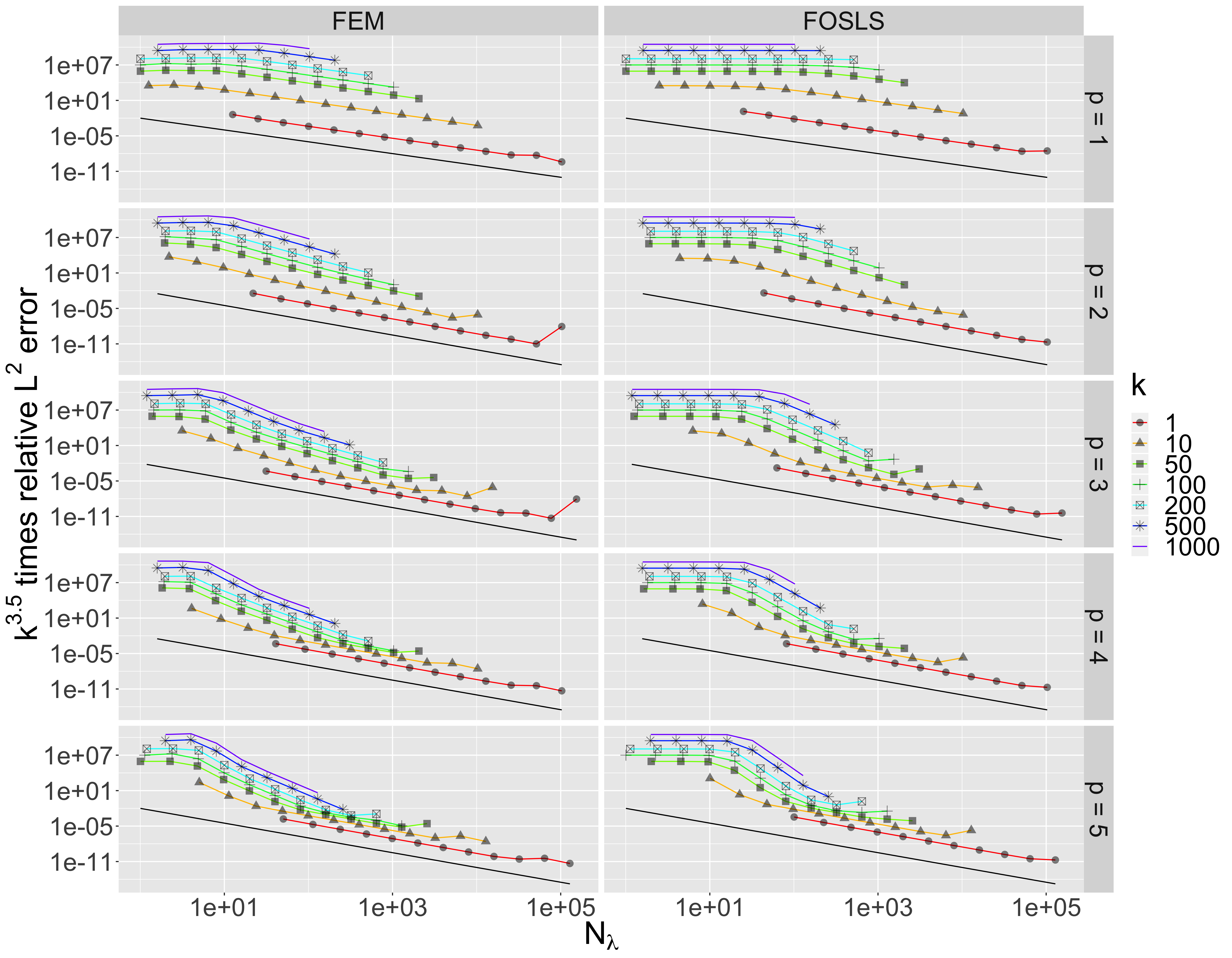}
	\caption{
	Comparison between the $h$-FEM (left) and $h$-FOSLS (right) for $p = 1,\ldots,5$ as described in Example~\ref{bernkopf_melenk_example:singular_solution_singular_f}.
	The reference line in black corresponds to $h^{\min(2.5, p+1)}$.
	}
	\label{bernkopf_melenk_figure:h_fem_versus_h_fosls_singular_f}
\end{figure}

\begin{remark}\label{bernkopf_melenk_remark:error_terms_explaining_imporved_convergence}
	The numerical results of Example~\ref{bernkopf_melenk_example:singular_solution_singular_f}
	visualized in Figure~\ref{bernkopf_melenk_figure:h_fem_versus_h_fosls_singular_f}
	indicate that Corollary~\ref{bernkopf_melenk_corollary:higher_regularity_convergence_rate} is in fact suboptimal as
	it predicts only a  convergence $O(h^{1.5})$ while we observe $O(h^{\min(2.5, p+1)})$.
        A starting point for understanding this
        better convergence behavior could be two observations: first, the duality argument in
        Theorem~\ref{bernkopf_melenk_theorem:a_priori_estimate} is based on the regularity
        $(\pmb{\psi},v) \in \pmb{H}^2(\Omega) \times H^2(\Omega)$ of the dual solution
        $(\pmb{\psi},v)$ whereas in fact (see the proof of Lemma~\ref{bernkopf_melenk_lemma:duality_argument})
        $(\pmb{\psi},v) \in \pmb{H}^2(\operatorname{div},\Omega) \times H^2(\Omega)$. Second, a more careful
        application of the Cauchy-Schwarz inequality~\eqref{bernkopf_melenk_eq:first_cs_estimate_in_a_priori_estimate}
	at the beginning of the proof of Theorem~\ref{bernkopf_melenk_theorem:a_priori_estimate} is advisable. In this connection,
        we point to the fact that the terms in the square brackets in~\eqref{bernkopf_melenk_eq:first_cs_estimate_in_a_priori_estimate} are not of the same order.
        To illustrate this, we plot the components
	\begin{equation}
		e_1 \coloneqq i k \pmb{e^\varphi} + \nabla e^u \quad \text{ and } \quad e_2 \coloneqq i k e^u + \nabla \cdot \pmb{e^\varphi}
	\end{equation}
	in Figure~\ref{bernkopf_melenk_figure:error_components_with_singular_f} for the problem studied in Example~\ref{bernkopf_melenk_example:singular_solution_singular_f}.
\qed
\end{remark}

\begin{figure}[h]%
    \centering
		{\includegraphics[width=5cm]{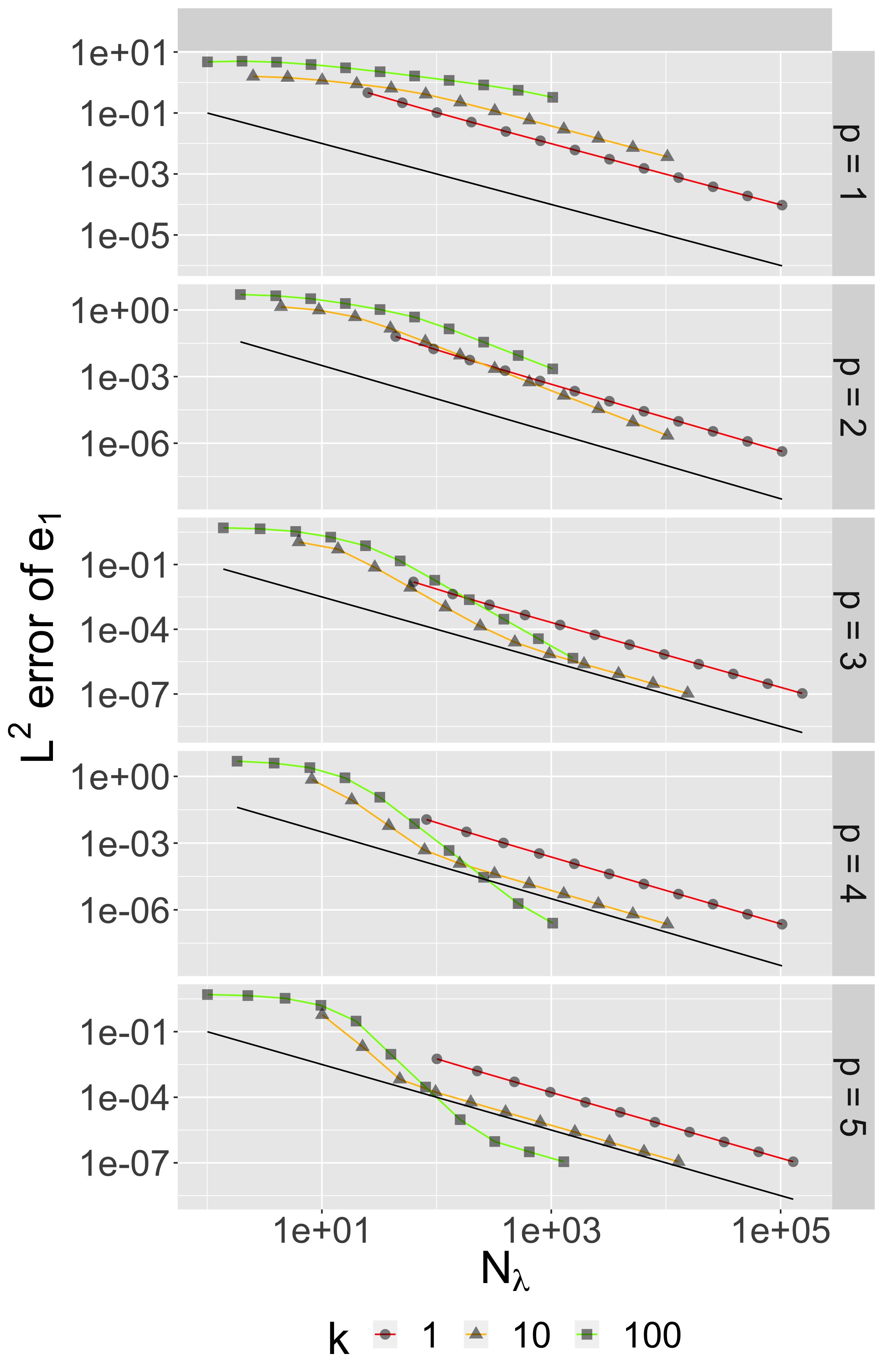} }%
    \qquad
		{\includegraphics[width=5cm]{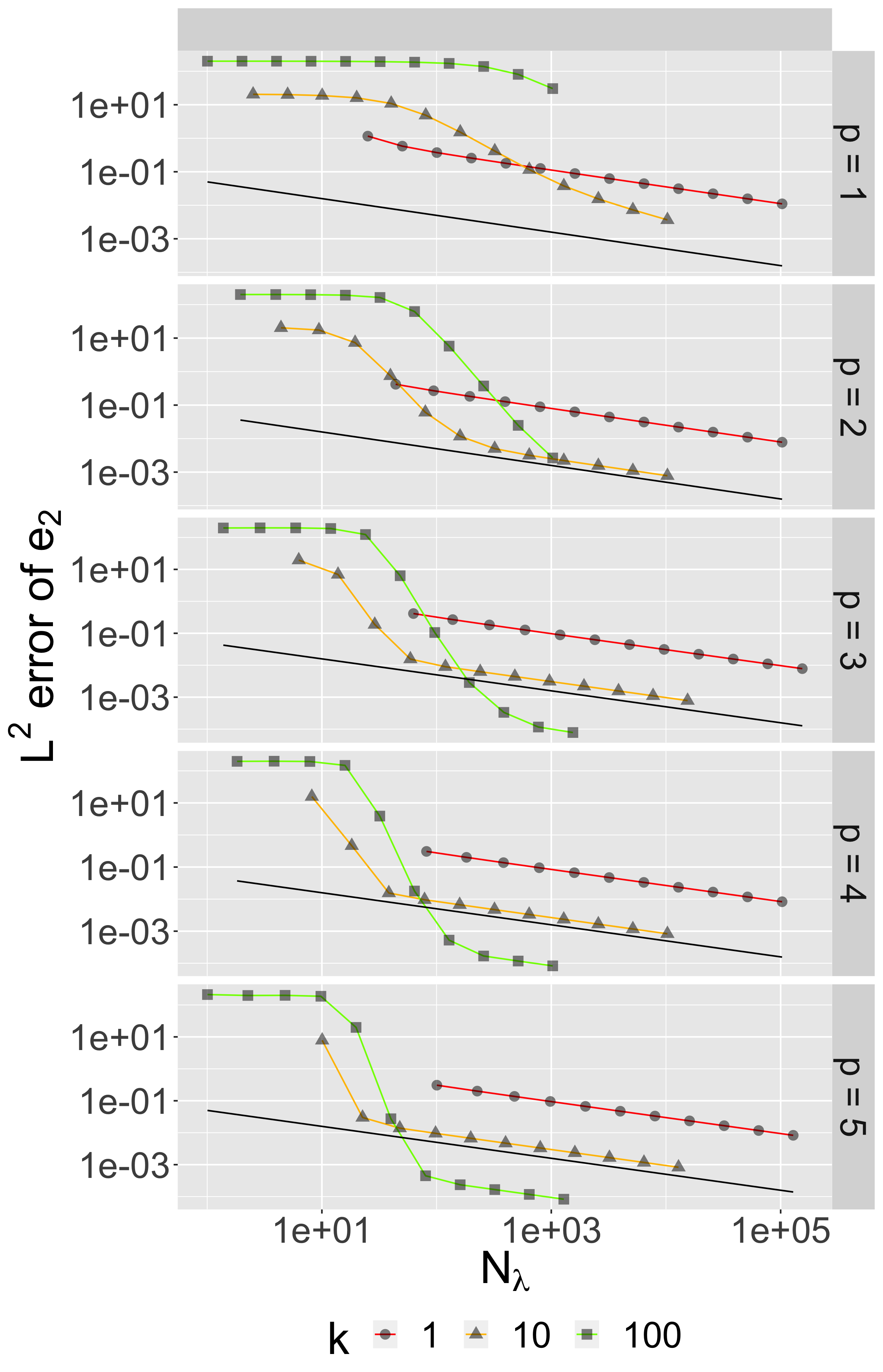} }%
		\caption{
		Comparison between the error terms
		$e_1 \coloneqq \bernkopfMelenkNorm{i k \pmb{e^\varphi} + \nabla e^u}_{L^2(\Omega)}$ (left) and
		$e_2 \coloneqq \bernkopfMelenkNorm{i k e^u + \nabla \cdot \pmb{e^\varphi}}_{L^2(\Omega)}$ (right)
		for $p = 1,\ldots,5$ as described in Remark~\ref{bernkopf_melenk_remark:error_terms_explaining_imporved_convergence}
		and Example~\ref{bernkopf_melenk_example:singular_solution_singular_f}.
		The reference line on the left corresponds to $h^1$ for $p = 1$ and $h^{1.5}$ for $p > 1$.
		The reference line on the right corresponds to $h^{1/2}$.
		}%
		\label{bernkopf_melenk_figure:error_components_with_singular_f}%
\end{figure}
\noindent
\textbf{Acknowledgement}:
MB is grateful for the financial support by the Austrian Science Fund (FWF) through the doctoral school \textit{Dissipation and dispersion in nonlinear PDEs} (grant W1245).
MB thanks the workgroup of Joachim Sch\"oberl (TU Wien) for help regarding the numerical experiments.


\bibliographystyle{spmpsci}
\bibliography{literature.bib}

\end{document}